\documentclass{article}
\usepackage{graphicx}
\usepackage{circuitikz}
\usepackage{amssymb}
\usepackage{answers}
\usepackage{setspace}
\usepackage{graphicx}
\usepackage{multicol}
\usepackage{mathrsfs}
\usepackage{hyperref}
\usepackage{quoting}
\usepackage{xcolor}
\usepackage{amsmath,amsthm,amssymb}
\usepackage{tikz}
\usepackage{quiver}
\usepackage{natbib}
\usepackage{array, booktabs}
\usepackage{ tipa }

\usetikzlibrary {arrows.meta,bending,positioning, fit}

 \newtheorem{question}{Question}
  \newtheorem{conjecture}{Conjecture}
\newtheorem{observation}{Observation}
 \newtheorem{fact}{Fact}
 \newtheorem{theorem}{Theorem}
\newtheorem{lemma}{Lemma}

\newtheorem{proposition}{Proposition}

\theoremstyle{definition}
\newtheorem{definition}{Definition}

\theoremstyle{definition}

\theoremstyle{definition}

\def\Ind#1#2{#1\setbox0=\hbox{$#1x$}\kern\wd0\hbox to 0pt{\hss$#1\mid$\hss}
\lower.9\ht0\hbox to 0pt{\hss$#1\smile$\hss}\kern\wd0}

\def\ind{\mathop{\mathpalette\Ind{}}}

\def\notind#1#2{#1\setbox0=\hbox{$#1x$}\kern\wd0
\hbox to 0pt{\mathchardef\nn=12854\hss$#1\nn$\kern1.4\wd0\hss}
\hbox to 0pt{\hss$#1\mid$\hss}\lower.9\ht0 \hbox to 0pt{\hss$#1\smile$\hss}\kern\wd0}

\def\nind{\mathop{\mathpalette\notind{}}}

\title{Reducing stable forking dependence to finitely many pregeometries}
\author{Scott Mutchnik\footnote{This project was supported by the NSF under Grant No. DMS-2303034.}}
\date{July 2026}

\begin{document}

\maketitle

\begin{abstract}

We show that one of the main cases of the stable forking conjecture, stability of the forking relation over a base in a finite-rank supersimple theory, is determined by finitely many pregeometries in each rank. This case of the stable forking conjecture has long had an implicitly well-known pregeometric interpretation: there is a set of matroids $\mathcal{G}_{n}$ such that the forking instability in rank $n$ is equivalent to the pregeometry on some rank-one partial type (over a finite set) embedding a matroid in $\mathcal{G}_{n}$. Our contribution is to show that this set of matroids $\mathcal{G}_{n}$, determining based forking stability in rank $n$, can be chosen to be finite.

The main part of our proof was already accomplished in rank $3$ by Peretz, but does not extend as stated to higher ranks (and may or may not directly extend in a weaker sense to higher ranks, by shrinking terms). However, we obtain a sufficient substitute for Peretz's work in ranks $n > 3$: we turn Peretz's original universal result into an existence theorem. The rest of our proof refines an argument from multi-experiment parameter identifiability, originating from work in applied model theory by Li, Meshkat, Ovchinnikov, Pillay, Pogudin and Scanlon.

\end{abstract}

\section{Introduction}

One of the most important problems in model theory is the \textit{stable forking conjecture}, which hypothesizes an especially strong connection between simplicity and the even more powerful phenomenon of stability. Stability and simplicity are already deeply connected even in the absence of the stable forking conjecture: they are tied together by independence phenomena within the classification theory of first-order logical theories. And while the origins of classification theory lie squarely within the context of stable theories, the rich web of interactions between stability theory and classes of \textit{unstable} theories, such as simple theories, has become one of the field's fundamental pillars.

The original problem prompting the invention of classification theory mainly had to do with stable theories: given a first-order theory $T$, and an infinite cardinal $\kappa$, how many models of size $\kappa$ does $T$ have (up to isomorphism)? The initial breakthrough on this question was due to Morley (\cite{M65}), who showed that a theory which is $\kappa$-categorical for \textit{some} uncountable cardinal $\kappa$, meaning that it has exactly one model of size $\kappa$, must be $\kappa'$-categorical for \textit{every} uncountable cardinal $\kappa'$. Yet it was Shelah (\cite{Sh90}) who established classification theory's central role within model theory by attacking this model-counting question more comprehensively. His central insight is to isolate, from among the theories whose model-counting status is nontrivial, the class of \textit{stable theories}: theories for which no formula has the \textit{order property}, meaning that there is no formula $\varphi(x, y)$ with a sequence $\{a_{i}, b_{i}\}_{i < \omega}$ such that $\models \varphi(a_{i}, b_{j})$ exactly when $i > j$. When a first-order theory $T$ is unstable, the problem of counting the number of models $I(T, \kappa)$ of $T$ of cardinality $\kappa$ becomes trivial, for uncountable $\kappa$: $T$ will just have the maximum number of models $I(T, \kappa) = 2^{\kappa}$.\footnote{But the problem of counting \textit{countable} models of a first-order theory, even for unstable theories, remains an active and exciting area of research, from Vaught's conjecture to the lowness problem posed in \cite{palacin2012omega} to the Koponen conjecture on countably categorical $n$-ary theories, the last of which was resolved by Baldwin, Freitag and the author in \cite{LongKoponenConjecturePreprint}.} By contrast, the model-counting problem in stable theories is extremely complex, but nonetheless tractable due to the structure theory that Shelah develops for stable theories.

Just as the rank of a definable set, introduced by Morley for uncountably categorical theories, generalizes the dimension of an algebraic variety, Shelah's structure theory defines an independence relation in stable theories that also has its counterpart in algebraic geometry. Specifically, Shelah's \textit{forking independence} $A \ind_{C} B$, between sets within a model of a theory, generalizes the algebraic freeness of fields $A$ and $B$ over a common subfield $C$. Shelah shows that forking independence is symmetric in any stable theory, meaning that $A \ind_{C} B$ implies $B \ind_{C} A$. Moreover, Shelah shows that stability is characterized by \textit{stationarity} of forking independence, or uniqueness of nonforking extensions over models. Shelah's structure theory for stable theories serves as the main ingredient for his proof of the main gap theorem, which gives a sweeping classification of first-order theories in terms of their numbers of size-$\kappa$ models $I(T, \kappa)$, a classification that is ultimately completed for uncountable $\kappa$ in \cite{HHL00}. The modern incarnation of Shelah's characterization of stability in terms of forking independence $A \ind_{C} B$ is established by Harnik and Harrington in \cite{HarnikHarrington1984}.

Because Shelah's model-counting program breaks down outside the class of stable theories, the main informal research program of interest today arising out of Shelah's work is to classify the more complex, unstable theories. Shelah initiates this program in \cite{Sh90} by introducing part of what is now considered to be the classical hierarchy of unstable first-order theories. This classical hierarchy was expanded in \cite{She95}, completed by Dzamonja and Shelah in \cite{DS04} (as referenced earlier in \cite{Sh99}, and further investigated in \cite{SU08}), and given its standard visual representation by Conant in \cite{FD}. At the historic core of this hierarchy of unstable theories is the class of \textit{simple theories}: theories where no formula has the \textit{tree property}, analogous to no formula in a stable theory having the order property. While every stable theory is simple, not every simple theory is stable. Nonetheless, it is within the class of simple theories that one of the most powerful approaches to classifying unstable theories originates: describing the structure of unstable theories using phenomena from stability theory itself, including the theory of independence.

This approach begins in celebrated work of Kim (\cite{KP98}), who shows that forking independence is not just symmetric in every stable theory: it is even symmetric in every simple theory. In fact, just like stationary of forking independence gives a complete characterization of stability, \textit{symmetry} of forking independence gives a complete characterization of simplicity (\cite{KP99}). Though in general, forking independence is not stationary in simple theories, Kim and Pillay show (\cite{KP99}) that forking independence in simple theories even satisfies its own analogue of stationarity: $3$-amalgamation or the independence theorem, whereby if $a \ind_{M} b_{1}$, $a \ind_{M} b_{2}$, $a \equiv_{M} a'$, and $b_{2} \ind_{M} b_{1}$, there is $a'' \ind_{M} b_{1}b_{2}$ with $a''\equiv_{Mb_{1}}a$, $a''\equiv_{Mb_{2}} a'$.

From these main results of simplicity theory, the conclusion is unavoidable that there is a deep structural connection between stability and simplicity. Yet this conclusion leaves open the possibility that the connection between stability and simplicity is more than just structural. In this scenario, there is a direct connection between, on one hand, the forking independence relation in simple theories, and, on the other, stable formulas themselves. This potential direct connection is made precise by the \textit{stable forking conjecture}, originating from a personal communication of Hart, Kim and Pillay cited in \cite{Kim01}:

\begin{conjecture}\label{stable forking conjecture}

\emph{(Stable forking conjecture)}

In a simple $\mathcal{L}$-theory $T$, let $a \nind_{C} b$. Then there is a formula $\varphi(x, \overline{b}) \in \mathrm{tp}(a / Cb)$ such that $\varphi(x, \bar{b})$ forks over $C$, and such that the $\mathcal{L}$-formula $\varphi(x, y)$ is stable (i.e. does not have the order property).

\end{conjecture}

Some further motivation for the stable forking conjecture comes from the consequences of its versions in larger classes of theories, including in $\mathrm{NSOP}_{1}$ theories and $\mathrm{NTP}_{2}$ theories. In $\mathrm{NSOP}_{1}$ theories, Gomez shows (\cite{GomezRosyTheories}) that the \textit{stable Kim-forking conjecture} implies that there is no strictly $\mathrm{NSOP}_{1}$ rosy theory, which resolves a problem of \cite{D19}, \cite{K21}. In $\mathrm{NTP}_{2}$ theories, proving the \textit{dependent forking conjecture} (posed as the ``dependent dividing" conjecture in \cite{Che14}) would have consequences for other open problems.  It is observed by Chernikov in a personal communication, as well as independently by the author, that the dependent forking conjecture over \textit{sets} implies that $\mathrm{NTP}_{2}$ is equivalent to resilience, which would answer a question posed in \cite{BYC14}.\footnote{It is shown in \cite{BYC14} that all dependent formulas are resilient, but resilient dividing over sets can be shown to imply resilience. Chernikov pointed out in the same personal communication that the latter fact relies on the resilient forking conjecture over sets, and is not clear from just the resilient forking conjecture over models.} Additionally, the dependent forking conjecture over models implies that $\mathrm{NSOP} \cap \mathrm{NTP}_{2} = \mathrm{simple}$\footnote{Suppose that $T$ is $\mathrm{NSOP}$ and $\mathrm{NTP}_{2}$, and the dependent dividing conjecture holds over models. Then every instance of dependence over a model is exhibited by a dependent formula. But it follows from Shelah's proof in \cite{Sh90} of the identity $\mathrm{NSOP}\cap \mathrm{NIP} = \mathrm{stable}$ that every dependent formula in an $\mathrm{NSOP}$ theory is stable. So every instance of dependence over a model is exhibited by a stable formula (the conclusion of the stable forking conjecture). But this is well-known to imply simplicity. (For example, the proof of symmetry of forking from \cite{KP98} works in this case. And by \cite{KP99}, symmetry of forking implies simplicity.)}, which would resolve a central open problem of classification theory posed in \cite{Che14}, with roots even earlier in a purported counterexample in \cite{Sh90}.

It is worth noting some additional consequences from the literature of special assumptions on theories strengthening the stable and dependent forking conjectures. Onshuus (\cite{Ons06}) shows that, in theories satisfying the conclusion of the \textit{strong stable forking conjecture}\footnote{As shown by Kim and Pillay (\cite{kim2001around}), the strong stable forking conjecture is known to be false in general.}, forking must be equal to thorn-forking. (This is connected to work of Ealy and Onshuus (\cite{ealy2016thorn}), where they prove an equivalence between thorn-forking and a parametrized version of forking via a stable formula.) Palacín also shows that every countably categorical simple theory satisfying the conclusion of the strong stable forking conjecture is low. Moreover, Takahashi (\cite{TakahashiConsequences}) applies an assumption on theories based on the conclusion of the dependent forking conjecture to subadditivity of burden in $\mathrm{NTP}_{2}$ theories.

As Casanovas and Potier observe, ``there is not much progress on the stable forking conjecture" (\cite{casanovaspotier2018}). The first result is due to Kim (\cite{Kim01}), who proves the conclusion of the stable forking conjecture in one-based theories with elimination of hyperimaginaries.\footnote{Brower and Hill (\cite{BrowerHillHyperimaginaries}) give a shorter proof that finite-rank supersimple theories with \textit{weak} elimination of hyperimaginaries satisfy the conclusion of the stable forking conjecture, without relying on the elimination of hyperimaginaries in supersimple theories proven in \cite{BuechlerPillayWagner2001}.} Peretz (\cite{Per06}) proves the conclusion of the stable forking conjecture relative to a base between types of $\mathrm{SU}$-rank $2$ in a countably categorical theory. He also investigates the $\mathrm{SU}$-rank-$3$ case of the stable forking conjecture; we will discuss his work in rank $3$ later in this paper. Brower (\cite{Brower2012}) improves on Peretz's results in rank $2$, eliminating the assumption of countable categoricity, and proving the conclusion of the stable forking conjecture over a base between a type of rank $2$ and a type of any finite rank. Palacín and Wagner (\cite{PW13}) prove the conclusion of the stable forking conjecture in countably categorical, supersimple $\mathrm{CM}$-trivial theories, and Casanovas and Potier (\cite{casanovaspotier2018}) show that the conclusion of the stable forking conjecture is preserved under constructing $T^{\mathrm{eq}}$. Baldwin and Freitag, in joint work with the author of this paper (\cite{LongKoponenConjecturePreprint}), also make progress on the \textit{simple Kim-forking conjecture} in $\mathrm{NSOP}_{1}$ theories. It is shown there that an infinite-variable version of simplicity of the Kim-forking relation holds in general, that a finite-variable version holds assuming finiteness of $F_{Mb}$, and that the simple Kim-forking conjecture itself is true (with nontrivial instances) under the assumption of definable Morley sequences.

Yet outside of the relatively narrow cases of $\mathrm{SU}$-rank-$2$ types and $\mathrm{CM}$-trivial theories, past progress on the stable forking conjecture remains limited, and we will not lose much difficulty concentrating on some central special cases. To start, like Peretz (\cite{Per06}) and Brower (\cite{Brower2012}), we can concentrate on the stable forking conjecture relative to a base, or equivalently, over the empty set. And instead of the full stable forking conjecture over a base, we can instead consider stability of the forking relation itself over a base, as in Palacín and Wagner (\cite{PW13}) or as with simplicity of the Kim-forking relation in \cite{LongKoponenConjecturePreprint}. Even under especially strong assumptions, such as between types of $\mathrm{SU}$-rank $3$ in countably categorical theories, stability of the forking relation over a base remains very much open. In this paper, we will be interested in stability of the forking relation over a base in theories of finite $\mathrm{SU}$-rank.

The goal of this paper will be to connect this core case of the stable forking conjecture to another research program inspired by Shelah's original development of stability theory: \textit{geometric stability theory}. The paradigmatic object in geometric stability theory is a \textit{strongly minimal set}: a model of a theory where every set definable in one variable, or the complement of that set, is finite. As shown by Baldwin and Lachlan in \cite{baldwin1971strongly}, strongly minimal sets come equipped with a pregeometry, or the matroid given by the algebraic closure operation on that set: for example, vector spaces with the linear span operation, or algebraically closed fields with the field-theoretic algebraic closure. And according to the motivating conjecture of geometric stability theory, Zilber's proposed trichotomy for pregeometries (\cite{zilber1986structural}), these two examples are essentially the only nontrivial instances of the pregeometry arising from a strongly minimal set. Specifically, Zilber posits that the pregeometry on a strongly minimal set is always (1) ``set-like" or trivial, (2) ``vector space-like" or locally modular, or (3) ``field-like," meaning that the theory of the strongly minimal set interprets an algebraically closed field. The first main breakthrough on this conjecture is due to Hrushovski (\cite{Hrustrongmin}), who shows that the Zilber trichotomy is false. Yet more recent developments in geometric stability theory have shown that the Zilber trichotomy is true for some very rich classes of pregeometries. Resolving a classical open problem, Castle (\cite{castle2024restricted}) shows that the Zilber trichotomy is true for pregeometries arising from reducts of a characteristic-zero algebraically closed field, and Castle, Hasson and Ye (\cite{CastleTrichotomy}) extend Castle's results to the case of positive characteristic.

It is implicitly well-known, as a standard application of equidominance tools from geometric stability theory (see, e.g., \cite{P96}), that stability of the forking relation over a base can be interpreted in terms of pregeometries. Here, we consider embeddings of abstract pregeometries (i.e., matroids) into the pregeometry given by the algebraic closure on an $\mathrm{SU}$-rank-$1$ partial type.

\begin{fact}\label{standard pregeometric interpretation}

Let $n < \omega$. Then there is a set $\mathcal{G}_{n}$ of (infinite) pregeometries, depending only on $n$, such that the following are equivalent for any finite-rank supersimple theory $T$:

    \begin{itemize}
        \item In $T$, the forking relation is stable over a base between types of rank $n$.
        \item No pregeometry in $\mathcal{G}_{n}$ embeds into the pregeometry on an $\mathrm{SU}$-rank-$1$ partial type over a finite set in $T$.
    \end{itemize}
    
\end{fact}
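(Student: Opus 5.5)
The plan is to reduce forking instability in rank $n$ to a configuration inside a single rank-one type, and then to read off the pregeometry that configuration induces. Concretely, $\mathcal{G}_{n}$ will be the set of all (isomorphism types of) pregeometries realized as $\mathrm{acl}$ on a rank-one partial type, restricted to the finite-dimensional configurations that witness instability. First I will make precise what ``the forking relation is stable over a base between types of rank $n$'' fails to mean. There are a complete type $p(x)$ of $\mathrm{SU}$-rank $n$ over a finite base, which we may absorb into the language and take to be $\emptyset$, and a type $q(y)$ of rank $n$ such that the relation $R(x,y) :\Leftrightarrow x \nind y$ on $p \times q$ has the order property. By compactness and Ramsey, this is witnessed by an indiscernible sequence $(a_i b_i)_{i<\omega}$ with $a_i \nind b_j$ iff $i>j$.

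The key step is to transfer this configuration to a rank-one type. Using finite rank and the standard coordinatization or equidominance machinery (as in \cite{P96}), I will decompose $p$ and $q$ into rank-one pieces. Each realization $a$ of $p$ is interalgebraic, over some finite parameter set, with a tuple of realizations of $\mathrm{SU}$-rank-$1$ types. After replacing the base by a finite set $C$ independent from the configuration, we may arrange that these pieces are non-orthogonal to a single rank-one partial type $\pi$ over $C$; the different pieces may need to be pooled into one partial type by disjunction. The forking pattern between $a_i$ and $b_j$ is then expressed through the $\mathrm{acl}$-dimension function on the tuples $\bar e_i, \bar f_j \subseteq \pi$ associated with $a_i, b_j$. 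Concretely, $a_i\nind b_j$ iff $\dim(\bar e_i\bar f_j)<\dim\bar e_i+\dim\bar f_j$, with $\dim\bar e_i=\dim\bar f_j=n$.

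I will then define $\mathcal{G}_{n}$ as the class of all pregeometries $G$ generated by distinguished $n$-element sets $E_i, F_j$ ($i<\omega$) such that $\dim(E_i\cup F_j)<2n$ iff $i>j$. For (1)$\Rightarrow$(2) I take the restriction of the pregeometry on $\pi$ to $\mathrm{acl}(\bigcup \bar e_i\bar f_j)\cap\pi$, which lies in $\mathcal{G}_n$. For (2)$\Rightarrow$(1), given an embedding of some $G\in\mathcal{G}_n$ into $\pi(\mathfrak{C})$, I use the images of $E_i$ and $F_j$ as realizations of types of rank $n$, namely the types of the tuples over $C$. Dimension then computes forking, so the forking relation has the order property over the finite base $C$. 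Since we need types of the same rank $n$ over a base, I must check that, after adding $C$ to the language, this is an allowed instance; here being ``over a finite set'' is exactly what is used.

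I expect the main obstacle to be the coordinatization step. First, rank-$n$ types need not be interalgebraic with $n$-tuples from one rank-one type, since they can be analysable or internal in more complicated ways. Second, forking between $a$ and $b$ must be faithfully detected by dimension in a single pregeometry. To handle this, I would first use supersimplicity to find a forking witness localized at rank-one coordinates. Specifically, if $a\nind b$, pick $c\in\mathrm{acl}(a)$-coordinates of rank one with $c\nind b$, iterating along an $\mathrm{SU}$-rank decomposition. Then I would use non-orthogonality and the fact that the non-orthogonality class gives a common pregeometry, after naming finitely many parameters, to put all relevant coordinates into one type $\pi$. Pregeometries that arise only through orthogonal classes can be ignored, since forking across orthogonal classes is trivial. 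Keeping $n$ fixed throughout makes $\mathcal{G}_n$ depend only on $n$.
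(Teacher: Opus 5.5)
Your overall architecture matches the paper's: pass to sets of realizations of finitely many complete minimal types, take their disjunction, and read the half-graph off the dimension function. There are, however, two genuine gaps, and the first makes your version of the statement false.

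\textbf{First gap: $\omega$-indexed configurations.} You assert that an order property for $x\nind y$ yields an indiscernible witness ``by compactness and Ramsey''. Accordingly, you let $\mathcal{G}_n$ consist of $\omega$-indexed configurations with no further requirement. But $\nind$ is not a definable relation. Different pairs $(a_i,b_j)$ can be witnessed by different formulas, so an indiscernible sequence realizing the EM-type need not keep the pattern. The paper flags exactly this caveat when it defines instability of a non-definable relation.

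Here is a concrete failure. Let $D$ be a countable division ring with elements $t_i,\lambda_j$ ($i,j<\omega$) such that $t_i\lambda_j=\lambda_jt_i$ iff $i>j$. For example, take the division ring generated by $t_i=y_i$ and $\lambda_j=x^{e_j}$ inside the twisted Mal'cev--Neumann series ring $K((\Gamma;\sigma))$, where $K=\mathbb{Q}(y_i:i<\omega)$, $\Gamma=\bigoplus_{j<\omega}\mathbb{Z}e_j$ is ordered lexicographically, and $\sigma_{e_j}$ fixes $y_i$ for $i>j$ and sends $y_i\mapsto y_i+1$ for $i\leq j$.

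The theory of infinite-dimensional left $D$-vector spaces is strongly minimal, hence stable. So forking is stable over every base: a $C$-indiscernible instance would give a forking formula $\varphi(x,b_0)\in\mathrm{tp}(a_1/Cb_0)$ with $\models\varphi(a_i,b_j)$ iff $i>j$. Yet the sets $E_i=\{(1,t_i,0,0),(0,0,1,t_i)\}$ and $F_j=\{(1,0,\lambda_j,0),(0,1,0,\lambda_j)\}$ satisfy $\langle E_i\rangle\cap\langle F_j\rangle\neq 0$ iff $\lambda_jt_i=t_i\lambda_j$ iff $i>j$. Thus $\dim(E_i\cup F_j)<4$ iff $i>j$, and a member of your $\mathcal{G}_2$ embeds into the pregeometry of the minimal type $x=x$.

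The paper avoids this by indexing the configurations by a cardinal $\kappa$ large enough for Erd\H{o}s--Rado over a finite base. An embedded $\kappa$-configuration then yields a $C$-indiscernible instance, because $x\nind_C y$ depends only on types over $C$. Conversely, any indiscernible instance stretches to length $\kappa$.

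\textbf{Second gap: the transfer to a single rank-one type.} You flag this step but do not resolve it. Interalgebraicity with rank-one tuples fails in general, even after enlarging the base. So does your claim $\dim\bar e_i=n$, since weight can be smaller than rank. For instance, a generic point of a sort fibred over a strongly minimal set, with infinite structureless fibres, has rank $2$ and weight $1$.

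Your remedy is to choose, for each dependent pair, a rank-one $c\in\mathrm{acl}(a)$ with $c\nind b$. This gives only one direction of ``$a_i\nind b_j$ iff $\dim(\bar e_i\bar f_j)<\dim\bar e_i+\dim\bar f_j$'', and only pair by pair. It does not produce a single choice of $\bar e_i,\bar f_j$ that also respects the independent pairs and indiscernibility. Non-orthogonality classes are beside the point.

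The missing tool is equidominance (Fact \ref{equidominancetheorem}), used as follows.
\begin{enumerate}
\item Use the chain condition to choose $B\supseteq C$ with $B\ind_C\{a_i,b_i\}_{i<\omega}$ and the sequence still $B$-indiscernible, so the forking pattern is unchanged.
\item Take a $B$-independent set $\bar a_0\subseteq\mathrm{acl}(Ba_0)$ of at most $n$ realizations of minimal types that forks over $B$ with exactly the same sets as $a_0$.
\item Copy $\bar a_0$ along the sequence, keeping indiscernibility, and repeat the procedure for the $b_i$.
\item Shrink to a finite base by supersimplicity, and take the disjunction of the at most $2n$ complete minimal types involved.
\end{enumerate}
Since $|\bar a_i|$ may be less than $n$, the configurations should use sets of size at most $n$, as in the paper, or be padded with new independent points.
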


However, this interpretation alone is not very deep: $\mathcal{G}_{n}$ is just some infinite set. So for the correspondence with pregeometries to give us actual nontrivial information, we will need to get additional control over this set of pregeometries $\mathcal{G}_{n}$ that describes all of the ways in which stability of the forking relation can fail.

In this paper, we will show that the pregeometric interpretation of forking stability has real descriptive power. Specifically, we will show that we can reduce stability of the forking relation over a base, between types of any fixed rank, to \textit{finitely many}  pregeometries. So an important and relatively general family of cases of the stable forking conjecture becomes more tractable, reducing to checking finitely many pregeometries in each rank. Our main theorem of this paper will be to show:

\begin{theorem}\label{main theorem}

Let $n < \omega$. Then there is a \emph{finite} set $\mathcal{G}_{n}$ of (infinite) pregeometries, depending only on $n$, such that the following are equivalent for any finite-rank supersimple theory $T$:

    \begin{itemize}
        \item In $T$, the forking relation is stable over a base between types of rank $n$.
        \item No pregeometry in $\mathcal{G}_{n}$ embeds into the pregeometry on an $\mathrm{SU}$-rank-$1$ partial type over a finite set in $T$.
    \end{itemize}

\end{theorem}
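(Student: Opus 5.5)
We will start from Fact~\ref{standard pregeometric interpretation} and reduce the infinite set $\mathcal{G}_{n}$ it provides to a finite subset. We will never need to reprove the equivalence itself, only to show minimality-type finiteness. The first step is to make the pregeometries in $\mathcal{G}_{n}$ explicit. An instance of instability of forking over a base between rank-$n$ types is witnessed by an indiscernible sequence $(a_i,b_i)_{i<\omega}$ with $a_i\nind b_j$ iff $i>j$, say. By equidominance/coordinatization in finite-rank supersimple theories, we may replace each $a_i,b_i$ by tuples of rank-one elements realizing a single rank-one partial type over a finite set. The pattern of algebraic dependencies among these coordinates then determines a countable pregeometry $G$ with an indiscernible ``half-graph'' structure. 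Each such $G$ is coded by a finite amount of data together with a uniform growth rule: the restriction of $G$ to the first few indices, say $i,j<N$, determines it. The reason is that for indiscernible sequences the matroid is determined by finitely many $n$-type patterns of bounded arity, since the rank is at most $2n$ per pair.

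The second step is a minimization argument, which is where Peretz's rank-$3$ work enters. In rank $3$, Peretz shows universally that any witness can be ``shrunk'' to one of finitely many canonical configurations. In rank $n>3$ we will instead prove an existence version. Whenever some witness pregeometry embeds into the rank-one type, one of finitely many reduced pregeometries also embeds. We will obtain it by iteratively passing to algebraic closures, projections and localizations (forking-preserving operations that stay inside the same rank-one type over a larger finite set) until no further reduction decreases a complexity measure in $\omega^{k}$. The resulting reduced configurations have bounded combinatorial description, hence there are finitely many of them up to isomorphism. This gives the finite $\mathcal{G}_n$, provided that every reduced configuration is itself a witness of forking instability. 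That condition is the converse direction, and it follows because each reduced configuration still exhibits the half-graph dependence pattern.

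The main obstacle is the finiteness of the reduced configurations. Even with bounded rank per index, the dependency data involve infinitely many indices, and ``eventually periodic'' or varying-arity patterns could a priori produce infinitely many non-isomorphic minimal configurations. For this step we will adapt the multi-experiment parameter identifiability argument of Li, Meshkat, Ovchinnikov, Pillay, Pogudin and Scanlon. That argument shows that identifiability from a sequence of independent experiments stabilizes after a number of experiments bounded in terms of the dimension. Here the ``experiments'' are the indices $i$, and the ``parameters'' are the canonical bases of the types of $a_i$ over the $b_j$. We expect that the canonical base has rank at most $n$, so the dependence pattern stabilizes after a number of indices bounded in terms of $n$. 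This bound makes the minimal configurations determined by finite matroids of size bounded by some $f(n)$, which yields finiteness.
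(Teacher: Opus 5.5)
Your outline names the right ingredients: reduction to a minimal type, a bounded canonical base, and a multi-experiment argument. But the step that carries all the weight is missing, and the justification you give in its place is false.

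In your first paragraph you claim that the pregeometry of an indiscernible witness is determined by its restriction to boundedly many indices, because ``the rank is at most $2n$ per pair''. Bounded rank per term does not bound the arity of dependencies. In $\mathrm{ACF}$, take a Morley sequence of generic points of a generic plane curve of degree $d$, over its coefficients. Each term consists of two rank-$1$ coordinates, and any $d(d+3)/2$ terms are independent, yet the next term depends on them.

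Appending a sequence of this kind to the terms of a witness ($\tilde{b}_{i}=b_{i}c_{i}$) is exactly how Observation~\ref{failure of universal Peretz theorem} produces, under pseudolinearity, witnesses of fixed rank whose limit type forks over arbitrarily long initial segments. Their limit-type canonical bases therefore have unbounded rank. So the bound you ``expect'' in your last paragraph is false for general witnesses, and the whole proof hinges on \emph{producing} a special witness.

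Your second paragraph does not do this. You never say what the complexity measure is. Nor do you show that a configuration which no ``algebraic closure, projection or localization'' can reduce has a canonical base of bounded rank. Reaching such a bound by shrinking the terms of a given witness is precisely the open Question~\ref{term shrinking question}. Your operations also never produce the genuinely new, non-algebraic realizations that the known argument requires.

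The missing idea is Lemma~\ref{existential Peretz theorem}. Consider all bases $C$ and $a,\{b_{i}\}_{i<\omega+\omega}$ with $a\nind_{C}b_{0}$, $a\ind_{C}b_{\omega}$, and the two halves mutually indiscernible over $Ca$. Among these, first minimize $\mathrm{SU}(a/C)$, then maximize the rank of the limit type. Suppose the limit type forked over $Cb_{0}\ldots b_{n-2}$. A rank count using $a\ind_{C}b_{\omega}$ gives some $0<k\le n-1$ with $a\ind_{Cb_{0}\ldots b_{k-1}}b_{k}$.
\begin{itemize}
\item If $a\nind_{Cb_{0}\ldots b_{k-1}}b_{\omega}$, reverse the sequence over the new base $Cb_{0}\ldots b_{k-1}$. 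This gives a witness with smaller $\mathrm{SU}(a/C)$.
\item If $a\ind_{Cb_{0}\ldots b_{k-1}}b_{\omega}$, apply the independence theorem over $\mathrm{acl}^{\mathrm{eq}}(Cb_{0}\ldots b_{k-1})$. This builds a \emph{new} witness along a Morley sequence in $\mathrm{tp}(b_{k}/\mathrm{acl}^{\mathrm{eq}}(Cb_{0}\ldots b_{k-1}))$, whose limit type has larger rank.
\end{itemize}
Either case contradicts the extremal choice. Hence the canonical base $B$ of the limit type lies in $\mathrm{acl}^{\mathrm{eq}}(Cb_{0}\ldots b_{n-2})$ and has rank at most $n^{2}$.

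Two further steps are also absent from your sketch.
\begin{itemize}
\item The $a_{i}$ must be rechosen independently over $B\{b_{i}\}$, so that the \emph{whole} sequence $\{a_{i}b_{i}\}$ is Morley over $B$ (Proposition~\ref{small canonical base}). The multi-experiment argument needs the sequence to be Morley over the parameter set.
\item That argument by itself only controls forking between whole terms. To pin down the pregeometry on individual coordinates you need the refinement of Lemma~\ref{refined Pillay-Scanlon argument}: a tree of height at most $m$ labelled by subsets of $\{1,\ldots,n\}$, and the greedy map $f_{\mathcal{T}}$, over whose image the remaining slices of any finite set are independent.
\end{itemize}
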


When $n = 3$, which is the lowest-rank case where stability of the forking relation over a base remains open, the technical core of the proof of this theorem was already proven by Peretz in \cite{Per06}. Peretz shows that in \textit{every} case where $\{a_{i}, b_{i}\}$ is a $C$-indiscernible sequence exhibiting instability of the forking relation $\nind_{C}$ over a base $C$ between types of rank $3$, meaning that $a_{i} \nind_{C} b_{j}$ exactly when $i > j$, the sequence $\{b_{i}\}_{i < \omega}$ is an independent or Morley sequence over a set of small $\mathrm{SU}$-rank. Specifically, in this case, $\{b_{i}\}_{i \geq 2}$ is a Morley sequence over $b_{0}b_{1}$.

Peretz hints in this same paper at generalizations of his results to higher ranks, potentially involving induction:

\begin{quoting}
    Simplifications of the proof can be achieved (for instance, if one is only interested in stable forking in U-rank 2, extensive simplifications are possible), but the following proof provides a better background for generalization, where the same basic structure could be repeated for higher SU-ranks (though the situation becomes more complicated).
\end{quoting}

\begin{quoting}

 The fact that we got a precise formula (moreover that it is forking) is very useful when one comes to extend this result to higher SU-ranks (e.g., for a proof by induction), and allows the result itself to be used and not just the methodology of the proof.

 Remark. (1) I will mention that we could prove Claim 2.3 using the independence theorem over Lascar strong types. Such a proof, though longer in this case, allows for certain generalizations to higher SU-ranks by changing our elements to tuples.
\end{quoting}

However, according to a personal communication with Tom Scanlon, it is understood that Peretz's generalizations did not work as expected. We will observe a straightforward generalization of Peretz's rank-$3$ result to higher ranks fails in an elementary way: by adding on an independent copy of an indiscernible sequence which is not a Morley sequence over a small-rank set. Still, we ask in Question \ref{term shrinking question} whether there is hope of recovering Peretz's results in higher ranks, by replacing terms with their canonical bases. It is not transparent from the proof of Peretz's result that this can be done.

The sense in which Peretz's result fails in higher ranks is in a \textit{universal} sense. That is, if the forking relation is unstable over a base (and an additional mild assumption is satisfied), it is not the case that for \textit{every} instance of instability $\{a_{i}, b_{i}\}_{i< \omega}$ of $\nind_{C}$ between types of higher rank $n$, $\{b_{i}\}_{i < \omega}$ is independent over a set of small $\mathrm{SU}$-rank. Our main lemma will be to generalize Peretz's work to higher ranks in a more unconventional way, proving an \textit{existential} result rather than a universal result. We will show that if the forking relation is unstable over a base between types of rank $n$, there \textit{exists} a base $C$, and a sequence $\{a_{i}, b_{i}\}_{i< \omega}$ exhibiting the order property for $\nind_{C}$ between types of rank $n$, such that  $\{b_{i}\}_{i < \omega}$ is independent over a small-rank set: $\{b_{i}\}_{i \geq n-1}$ is a Morley sequence over $Cb_{0} \ldots b_{n-2}$. Since Theorem \ref{main theorem} involves an existential statement, specifically the existence of an embedding of pregeometries, the statement of this paper's main theorem will be robust to our replacement of the universal quantifier with an existential one. Our proof of the existential result, like Peretz's proof of his universal result in rank $3$, will involve the independence theorem.

After giving the more powerful part of our argument by proving this existential version of Peretz's theorem, we will require some additional rank manipulations to complete the proof of our main theorem. Formally, these manipulations will generalize arguments on multi-experiment parameter identifiability corresponding to Corollary 3.2 of \cite{Ovc22}. These arguments belong to a broader body of work giving both implicit and explicit applications of canonical bases within theoretical experimental biology (\cite{Li21}, \cite{Ovc21}, \cite{Li23}, \cite{Ovc22}, \cite{ovchinnikov2025identifiable}, \cite{meshkat2025algorithm}). The original argument involves forking between entire terms within a Morley sequence over a small-rank set. However, in our refinement of this argument, the terms will be viewed as sets of realizations of an $\mathrm{SU}$-rank-$1$ partial type. We will be interested in forking between the individual realizations of that type (or in other words, the pregeoemtry on those realizations).

The organization of this paper is as follows. In Section \ref{preliminaries}, we will state the case of the stable forking conjecture that we are interested in, stability of the forking relation over a base (Conjecture \ref{stability of the forking relation over a base}). We will then review abstract pregeoemtries, and introduce the pregeometry on an $\mathrm{SU}$-rank $1$ partial type. We will conclude our setup by giving the argument for the well-known pregeometric interpretation of the stable forking conjecture, the above Fact \ref{standard pregeometric interpretation} (restated as Fact \ref{standard pregeometric interpretation, restated}). In Section \ref{an existential Peretz theorem}, we will present Peretz's original result on forking within instances of instability of $\nind_{C}$ in rank $3$ (Fact \ref{Peretz theorem}), and give the elementary argument for why this does not directly generalize to ranks higher than $3$ (Observation \ref{failure of universal Peretz theorem}). We will then give our own nonstandard generalization of Peretz's theorem, proving our existential variant of the result in higher ranks (Lemma \ref{existential Peretz theorem}). In Section \ref{a refined multi-experiment argument} we will give the final sequence modifications and rank manipulations for the proof of our main theorem, generalizing arguments from within experimental parameter identifiability (in Lemma \ref{refined Pillay-Scanlon argument}). In Section \ref{canonical base section}, we will make some additional final remarks on canonical bases, giving a rank-independent statement of our existential version of Peretz's theorem (Proposition \ref{existential Peretz theorem, rank-independent statement}) and asking a question on whether we can recover a more universal version of Peretz's work (Question \ref{term shrinking question}).

\section{Preliminaries}\label{preliminaries}

We assume a working knowledge of simplicity theory (see, e.g., \cite{Kim14} for a reference) and use standard model-theoretic notation; particularly, all sets/tuples $A, B, C, a, b, c$, etc. are assumed to be subsets of a sufficiently saturated ambient model $\mathbb{M} \models T$ of a theory $T$. We begin by describing the core basic case of the stable forking conjecture to which our results will apply; we will then review pregeometries in a minimal type, and the standard fact of geometric stability theory reducing finite-rank types to minimal types; and finally, we will state the standard, implicitly well-known pregeometric interpretation of our specialization of the stable forking conjecture, which follows from this fact of geometric stability theory.

\textbf{The stable forking conjecture:} The stable forking conjecture is stated in the introduction, in Conjecture \ref{stable forking conjecture}. Given the apparent difficulty of resolving this conjecture, we are justified in distilling the original and most general incarnation of the stable forking conjecture to a central case, which itself remains very much open. One readily apparent specialization is to relativize the stable forking conjecture to a base, or equivalently, to restrict the original stable forking conjecture to where the base is the empty set. This case is implicitly considered in Peretz (\cite{Per06}), while the main result of Brower (\cite{Brower2012}) explicitly restricts to where the base is the empty set. Moreover, relativizing to a base is obligatory for the \textit{stable Kim-forking conjecture} and \textit{simple Kim-forking conjecture} in \cite{LongKoponenConjecturePreprint}, \cite{Bos25}: these conjectures apply to all $\mathrm{NSOP}_{1}$ theories, while failing to relativize to a base results in a statement that implies simplicity, so could not apply to all $\mathrm{NSOP}_{1}$ theories.

\begin{conjecture}\label{stable forking conjecture over a base}
    \emph{(Stable forking conjecture over a base)}

    Let $T$ be a simple $\mathcal{L}$-theory. If $a \nind_{C}b$, there is a formula $\varphi(x, b) \in \mathrm{tp}(a/Cb)$ which forks over $C$, such that $\varphi(x, y)$ is a stable $\mathcal{L}(C)$-formula.
\end{conjecture}

So $\varphi(x,y)$ is only required to be stable in $\mathcal{L}(C)$, rather than in $\mathcal{L}$, which weakens the original conjecture. This is equivalent to the original stable forking conjecture for $C = \emptyset$: the based stable forking conjecture is true with the base $C$, when the original stable forking conjecture is true over the empty set in the expansion by constants for $C$.

The other main specialization is to conjecture the stability of the forking \textit{relation}, rather than the existence of a stable \textit{formula} exhibiting every instance of forking dependence. This is done independently of relativizing to a base in Section 4 of \cite{PW13}, though \cite{Per06} concentrates on the case of stability of the forking relation over a base. The underlying premise of this case is that stability applies to relations that are not necessarily definable. For a general relation to be stable, it must not exhibit the order property, just as stability of a formula means that it does not have the order property. However, when $R(x, y)$ is not necessarily a definable relation, the existence of a sequence $\{a_i, b_i\}_{i < \omega}$ with $R(a_i, a_j)$ exactly when $i > j$ does not necessarily yield an \textit{indiscernible} sequence with this property. So to define instability of a general relation, as opposed to a definable relation, we must explicitly stipulate that an instance of the order property be indiscernible. Equivalently, we may require an instance $\{a_i, b_i\}_{i < \kappa}$ of the order property for $\kappa$ large enough to apply the Erdős–Rado theorem; from this, we can extract an indiscernible instance of the order property for any (invariant) relation, while conversely, we can get a long instance of the order property from an indiscernible instance of the order property by compactness. Our main case of the stable forking conjecture will be stability of the forking relation $x\nind_{C}y$ over a base $C$.

\begin{conjecture}\label{stability of the forking relation over a base}\emph{(Stability of the forking relation over a base)}

If $T$ is simple, then the dependence relation $x \nind_{C} y$ over every base $C$ is stable: it does not have the order property, so there is no $C$-indiscernible sequence $\{a_{i}, b_{i}\}_{i < \omega}$ such that $a_{i} \nind_{C} b_{j}$ exactly when $i > j$. Equivalently, there is no sequence $\{a_{i}, b_{i}\}_{i < \kappa}$ such that $a_{i} \nind_{C} b_{j}$ exactly when $i > j$,  where $\kappa$ is large enough to apply the Erdős–Rado theorem over $C$.
\end{conjecture}

As before, stability of the forking relation over every base in every simple theory is equivalent to stability of $\nind_{\emptyset}$ in every simple theory. Stability of the forking relation over a base is also no stronger than the stable forking conjecture over a base (this is standard, but observed in Remark 7.5 of \cite{LongKoponenConjecturePreprint}): a formula exhibiting forking in the order property for $x\nind_{C} y$ would not be stable.

Because all invariant relations in finitely many variables are definable in countably categorical theories, stability of the forking relation over a base is related to the stable forking conjecture over a base in countably categorical simple theories. As observed in \cite{Per06} and the proof of Corollary 4.4 of \cite{PW13}, in a countably categorical simple theory, stability of $x \nind_{C} y$ over every finite base $C$ is equivalent to the stable forking conjecture over every finite base $C$: the relation $x \nind_{C} y$ is definable by an $\mathcal{L}(C)$-formula $\varphi(x, y)$ with parameters in $C$, for which the formula $\varphi(x, b)$ defining $x \nind_{C} b$ will fork over $C$.   Moreover, it follows from the proof of Corollary 4.4 of \cite{PW13} that, in a countably categorical \textit{supersimple} theories, stability of the forking relation over a base is equivalent to the stable forking conjecture over a base; that is, supersimplicity allows us to remove the finiteness assumption on the base. To sketch the argument, in a countably categorical simple theory satisfying stability of the forking relation over a base, if $a \nind_{C} b$, we may use supersimplicity to find finite $C_{0} \subset C$ such that $ab \ind_{C_{0}} C$. Then $a \nind_{C_{0}} b$, so $a$ satisfies $x \nind_{C_{0}} b$; by countable categoricity, $x \nind_{C_{0}} b$ will be definable by a formula $\varphi(x, b)$ that forks over $C_{0}$, which can be chosen to be an instance of the $\mathcal{L}(C_{0})$-formula $\varphi(x, y)$ defining $x \nind_{C_0} y$. The formula $\varphi(x, b)$ will even fork over $C$ because $b \ind_{C_{0}} C$, so $\varphi(x, b)$ will exhibit the dependence $a \nind_{C} b$. And $\varphi(x, y)$ will be a stable formula in $\mathcal{L}(C_{0})$, so in $\mathcal{L}(C)$, by stability of the forking relation over a base.

From the statement of the conjecture of the stability of the forking relation over a base, one can informally infer what it means for a specific theory to satisfy the conclusion of this conjecture, or to satisfy the conclusion of this conjecture between types of a specific rank. Since this will be important to the statement of the main theorem of this paper, we will nonetheless state explicitly what this means. For $T$ a finite-rank supersimple theory, we say \textit{the forking relation is stable over a base between types of rank $n$ in $T$} if for every base $C$, the forking relation $x \nind_{C} y$ over $C$ between types of rank $n$ is stable: there is no $C$-indiscernible sequence $\{a_{i}, b_{i}\}_{i < \omega}$ such that $a_{i} \nind_{C} b_{j}$ exactly when $i > j$ (or equivalently, no sequence $\{a_{i}, b_{i}\}_{i < \kappa}$ such that $a_{i} \nind_{C} b_{j}$ exactly when $i > j$,  where $\kappa$ is large enough to apply the Erdős–Rado theorem over $C$) where $\mathrm{SU}(a_{i}/C), \mathrm{SU}(b_{i}/C) \leq n$. (The inequality $\mathrm{SU}(a_{i}/C), \mathrm{SU}(b_{i}/C) \leq n$ in place of $\mathrm{SU}(a_{i}/C), \mathrm{SU}(b_{i}/C) = n$ is harmless here, giving an equivalent statement.)

Stability of the forking relation over a base, Conjecture \ref{stability of the forking relation over a base}, is clearly a different statement than the statement of the stable forking conjecture. Though the statements appear related, it does not appear easy at all to show that stability of the forking relation over a base implies the stable forking conjecture. But even under the strongest assumptions under which the full stable forking conjecture remains open, such as $\mathrm{SU}$-rank $3$, stability of the forking relation over a base also remains open. So, in choosing to concentrate on stability of the forking relation over a base, rather than the full stable forking conjecture, we have not unduly reduced the difficulty of the problem we wish to investigate.

\textbf{Minimal types, and the standard reduction to minimal types:} In supersimple theories, when one speaks of the $\mathrm{SU}$-rank $\mathrm{SU}(p)$ of a type $p$, it is often implicit that one is referring to a complete type; this is reasonable, because the $\mathrm{SU}$-rank of a partial type is defined in terms of the $\mathrm{SU}$-rank of complete types. However, the $\mathrm{SU}$-rank of a partial type is well-established; \cite{KP99} define $\mathrm{SU}$-rank for formulas as well as for complete types, using the same definition in terms of complete types as for the $\mathrm{SU}$-rank of a partial type. To our knowledge, the first explicit reference to a partial type as having $\mathrm{SU}$-rank is in \cite{Palacin2013Ample}, though the $\mathrm{SU}$-rank of a partial type is not explicitly defined there. A  definition is given in an online encyclopedia page archived by Hanson \cite{HansonWiki} (this is defined for $\mathrm{U}$-rank in stable theories, but the definition of the $\mathrm{U}$-rank is noted there to apply just as well to simple theories in the form of the $\mathrm{SU}$-rank):

\begin{definition}\label{su-rank of a partial type}

Let $p(x)$ be a partial type over a set $C$ in a simple theory. Then $\mathrm{SU}(p) := \{\mathrm{sup}(\mathrm{SU}(a/C)): a \models p(x)\}$.

\end{definition}

The case in which we will need this definition of $\mathrm{SU}$-rank for partial types, and not just complete types, will be the case of $\mathrm{SU}$-rank $1$ partial types. We can call these \textit{minimal} partial types, in keeping with the established terminology (\textit{minimal}, or \textit{weakly minimal}) for complete types. Minimal partial types, just as well as the pregeometries of strongly minimal sets defined in the foundational work of Baldwin and Lachlan mentioned in the introduction (\cite{baldwin1971strongly}), come equipped with a pregeometry given by the algebraic closure. Some precedent is found in, say, (\cite{Casanovas2006}), where it is observed that $\mathrm{SU}$-rank $1$ types as well as $D$-rank $1$ formulas (which define $\mathrm{SU}$-rank $1$ partial types) have a pregeometry given by the algebraic closure, and the class of partial types on which the algebraic closure gives a pregeometry is isolated by an explicit definition. Since we will also be interested in pregeometries in the abstract, combinatorial sense, we will first define these abstract pregeometries:

\begin{definition}\label{pregeometry definition}

A \textit{pregeometry} on a set $S$ is a map $\mathrm{cl}: \mathcal{P}(S) \to \mathcal{P}(S)$ satisfying:

\begin{itemize}
    \item (1) Reflexivity: $A \subset \mathrm{cl}(A)$

    \item (2) Monotonicity: $A \subseteq B \Rightarrow \mathrm{cl}(A) \subseteq \mathrm{cl}(B)$

    \item (3) Idempotence: $\mathrm{cl}(\mathrm{cl}(A)) = \mathrm{cl}(A)$

    \item (4) Finite character: $\mathrm{cl}(A) = \bigcup_{F \subset A \mathrm{\: finite}}\mathrm{cl}(F)$

    \item (5) Exchange: if $b \in \mathrm{cl}(Ac) \backslash \mathrm{cl}(A)$, then $c \in \mathrm{cl}(Ab) \backslash \mathrm{cl}(A)$.
\end{itemize}

\end{definition}

The pregeometry on a set also comes equipped with a \textit{dimension} function: $\mathrm{dim}(A)$ is the maximum size of a subset $A_{0} \subset A$ such that, for $a \in A_{0}$, $a \notin \mathrm{cl}(A_{0} \backslash \{a\})$. There is an alternative definition of pregeometries in terms of dimension functions, as well as a well-known cryptomorphism between pregeometries as defined in terms of dimension functions and pregeometries as defined in terms of closure operators. An \textit{embedding} of pregeometries $(S, \mathrm{cl})$ and $(S', \mathrm{cl})$ is just as the terminology might suggest: an injective map $\iota: S \hookrightarrow S' $ such that, for each $A \subset S$, $a \in S$, $a \in \mathrm{cl}(S) \Leftrightarrow \iota(a) \in \mathrm{cl}(\iota(S))$.

It is one of the foundational observations of geometric stability theory (see a standard reference on the subject, e.g., \cite{P96}) that on a strongly minimal set, or on (the set of realizations of) a minimal type in a stable theory, the algebraic closure operator $\mathrm{cl} := \mathrm{acl}$ gives a pregeometry. The argument is the same for minimal or $\mathrm{SU}$-rank $1$ partial types in a simple theory: for $p(x)$ a minimal partial type over $C$, $\mathrm{acl}_{C}$ already satisfies (1)-(4) by definition. And $\mathrm{acl}_{C}$ satisfies (5) because the relation $b \in \mathrm{acl}_{C}(Ac) \backslash \mathrm{acl}_{C}(A)$ coincides with the forking relation $b \nind_{AC} c$, for $b, c$ individual realizations and $A$ a set of realizations, and the forking relation $\nind_{AC}$ is symmetric.

\begin{definition}
    Let $p(x)$ be an $\mathrm{SU}$-rank $1$ partial type over a set $C$. Then the \textit{pregeometry} on $p(x)$ is $(S, \mathrm{cl})$, where $S$ is the set of realizations of $p(x)$ and $\mathrm{cl} :=\mathrm{acl}_{C}$ (i.e., for $A \subset S$, $\mathrm{cl}(A) = \mathrm{acl}_{C}(A) \cap S $).
\end{definition}

Because $\mathrm{SU}$-rank $1$ partial types come with a pregeometry, the standard pregeometric interpretation of the stability of the forking relation over a base will follow from reducing more general types in a finite-rank supersimple theory to types of sets of realizations of $\mathrm{SU}$-rank $1$ types. Let us begin to make this more precise, stating a standard powerful result of geometric stability theory that straightforwardly transfers to simplicity theory. We first need the following definition:

\begin{definition}
    \label{equidominance}

    Two types $p, q \in S(A)$ are \emph{equidominant} over $A$ if there are realizations $a$ and $b$ of $p$ and $q$ respectively such that, for any set $B$, $a \nind_{A} B$ if and only if $b \nind_{A} B$. Two complete types $p$, $q$ over any sets are \emph{equidominant} if there is some set $A$ containing the domains of $p$ and $q$ such that there are nonforking extensions of $p$ and $q$ to $A$ which are equidominant over $A$.
\end{definition}

The following follows from, say, Corollary 5.2.19, Proposition 5.1.12, and Lemma 5.2.11 of \cite{Wag00}. (The first of these gives the result for regular types, rather than minimal types, in supersimple theories of possibly infinite rank. The last two imply that regular types of finite $\mathrm{SU}$-rank are exactly the types of $\mathrm{SU}$-rank $1$.)

\begin{fact}
   \label{equidominancetheorem}
    
    Let $T$ be supersimple of finite rank. Then any type $p(x) \in S(A)$ is equidominant with the type, over some set $B$, of a finite $B$-independent set of realizations of minimal types over $B$.\footnote{To avoid ambiguity, let us state explicitly that we mean that these realizations of minimal types will be tuples, not singletons. So a set of realizations will really be the union of these tuples.}

\end{fact}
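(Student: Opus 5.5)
The plan is to argue by induction on $n = \mathrm{SU}(p)$, in the style of the proof of Corollary 5.2.19 of \cite{Wag00}. Throughout, work in $T^{\mathrm{eq}}$; if necessary, work with hyperimaginaries and $\mathrm{bdd}$ in place of $\mathrm{acl}$, which the footnote's convention on tuples permits. We will use two facts. The first is that, in finite rank, regular types are exactly the types of $\mathrm{SU}$-rank $1$ (Proposition 5.1.12 and Lemma 5.2.11 of \cite{Wag00}). The second is that equidominance is preserved under passing to nonforking extensions, which is built into Definition \ref{equidominance}. The case $n = 0$ is trivial: $a \in \mathrm{acl}(A)$ is equidominant with the empty tuple. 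For $n \geq 1$, let $a \models p$.

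\textbf{Step 1 (peel off a minimal piece).} Since $\mathrm{SU}(a/A) \geq 1$, $p$ is non-orthogonal to some regular type, and by finite rank this type has rank $1$. So there are $B \supseteq A$ with $a \ind_{A} B$ and an element $c \in \mathrm{acl}(aB)$ with $\mathrm{SU}(c/B) = 1$ and $c \notin \mathrm{acl}(B)$. One way to get this is to take a forking extension of $\mathrm{tp}(a/A)$ of rank $n-1$, pass to a canonical base, and use finite rank to find a rank-one element algebraic over $a$ and the parameters. By the Lascar equalities (additivity, since ranks are finite), $\mathrm{SU}(a/Bc) = n-1$.

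\textbf{Step 2 (induction).} Apply the inductive hypothesis to $\mathrm{tp}(a/Bc)$. This gives $B' \supseteq Bc$ with $a \ind_{Bc} B'$ and a finite $B'$-independent tuple $d = (d_1, \dots, d_k)$ of realizations of minimal types over $B'$ such that $a$ and $d$ are equidominant over $B'$. Choose $B'$ with $B' \ind_{Bc} a$, and also, using extension and the independence theorem over $B$, with $B' \ind_{B} c$ modulo $c$ itself. The aim is then to replace the base by $B'' := B' \setminus \{c\}$, or by a conjugate of $B'$ independent from $c$ over $B$, so that $c$ still has rank $1$ over the new base. One then checks that $c, d_1, \dots, d_k$ is an independent set of realizations of minimal types over that base, and that $a$ is equidominant with $cd$ over it. For equidominance, the verification is as follows. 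For any $D$, $a \nind D$ holds iff either $c \nind D$ or $a \nind_{c} D$; this is transitivity of forking together with $c \in \mathrm{acl}(aB)$. By the inductive hypothesis, the second condition is equivalent to $d \nind_{c} D$. Transitivity then gives back $cd \nind D$.

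\textbf{Main obstacle.} The main obstacle is the bookkeeping of bases in Step 2. The tuple $d$ is minimal and independent over a base containing $c$, while we need a single base over which $c$ and all of the $d_i$ are simultaneously rank-$1$ and jointly independent. If that cannot be arranged cleanly, the fallback is to invoke Corollary 5.2.19 of \cite{Wag00} directly. That corollary states that any type of finite rank is domination-equivalent to a finite product of regular types. The regular factors then have rank $1$ by the cited rank computations, and domination-equivalence in both directions is exactly the two-sided forking condition of Definition \ref{equidominance}, after passing to a common base via nonforking extensions.
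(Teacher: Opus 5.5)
\textbf{The fallback.} Your fallback is exactly the paper's proof. The paper gives no argument beyond citing Corollary 5.2.19 of \cite{Wag00} (decomposition into regular types in supersimple theories), together with Proposition 5.1.12 and Lemma 5.2.11, which identify the regular types of finite rank as the $\mathrm{SU}$-rank-$1$ types. The one point needing care is to quote that corollary in the single-pair form that Definition \ref{equidominance} requires: one realization of $p$ and one tuple of regular realizations forking with the same sets. Mutual domination of types, witnessed by possibly different realizations, is a priori weaker, so the ``exactly'' in your last sentence should be replaced by quoting the single-pair form.

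\textbf{The inductive route.} Steps 1--2 cannot be completed, and the obstruction is not bookkeeping. Step 1 and your transitivity computation for equidominance are fine. What fails is adjoining $c$ as a new piece independent from the pieces $d$ of $\mathrm{tp}(a/Bc)$.

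Your scheme adds exactly one piece per unit of rank, since a rank-$1$ type has exactly one piece. As the paper notes right after the Fact, the pieces of any decomposition lie in $\mathrm{acl}(aE)$. So if Step 2 always succeeded, every type would, after a nonforking extension, be interalgebraic with a product of $\mathrm{SU}(p)$ minimal types. That is false: the number of pieces is the weight, which can be smaller than the rank.

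Here is a counterexample. Take $T$ to be the theory of an equivalence relation $R$ with infinitely many classes, all infinite, and let $e$ be an element. Then $\mathrm{SU}(e)=2$, and Step 1 gives, say, $c=e/R$. Step 2 would produce two $E$-independent elements of $\mathrm{acl}^{\mathrm{eq}}(eE)$ of rank $1$ over $E$, for some $E$ with $e \ind E$. No such pair exists:
\begin{itemize}
\item One of the two, call it $d$, would satisfy $d \ind_{E} c$.
\item Take $e' \equiv_{Ed} e$ with $e' \ind_{Ed} e$. Since $d \in \mathrm{acl}(eE)\cap\mathrm{acl}(e'E)$, this gives $\mathrm{SU}(ee'/E)=3$, so $e \nind_{E} e'$.
\item But $e$ and $e'$ lie in distinct classes, because $c \notin \mathrm{acl}(Ed)$, and neither class is algebraic over $E$. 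In this theory two such elements are always independent over $E$, a contradiction.
\end{itemize}
In fact $e$ is equidominant with $c$ alone. So the piece coming from the fibre $\mathrm{tp}(e/c)$ has to be absorbed, not adjoined.

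Deciding which pieces are absorbed and which are genuinely new is exactly the non-orthogonality and weight analysis behind Wagner's decomposition theorem. Roughly, over bases $E \ind_{A} a$ one chooses a maximal $E$-independent family of rank-one elements of $\mathrm{acl}(aE)$, whose size is bounded by $\mathrm{SU}(p)$, and shows that it dominates $a$. So your proposal stands only through the citation, which is precisely what the paper does.
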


It follows that, for $p(x) \in S(A)$, there are $a \models p(x)$, $B \ind_{A} a$ and a $B$-independent set  $\overline{a}$ of realizations of minimal types over $B$ such that $\overline{a}$ forks with all the same sets as $a$ over $B$, and $\overline{a} \subset \mathrm{acl}(Ba)$; thus the number of independent realizations of minimal types over $B$ in $\overline{a}$, which is equal to $\mathrm{SU}(\overline{a}/B)$, is less than $\mathrm{SU}(p) = \mathrm{SU}(a/B)$. To see this, use the fact to find $a \models p(x)$, $B \ind_{A} a$ and a $B$-independent set  $\overline{a}$ of realizations of minimal types over $B$ such that $\overline{a}$ forks with all the same sets as as $a$ over $B$. Then for each realization $a_{i}$ of a minimal type over $B$ in $\overline{a}$, $a_{i} \nind_{B} \overline{a}$. So $a_{i} \nind_{B} a$. Since $\mathrm{tp}(a_{i} / B)$ is minimal, $a_{i} \in \mathrm{acl}(Ba)$. So $\overline{a} \subset \mathrm{acl}(Ba)$, because $a_{i}$ was an arbitrary member of $\overline{a}$.

\textbf{Pregeometric interpretation of stable forking dependence:} By the previous fact, we can reduce stability of the forking relation over a base, between types of rank $n$ in a finite-rank supersimple theory, to stability of the forking relation over a base between types of sets consisting of at most $n$ realizations of minimal types.

\begin{fact}\label{reduction to sets of realizations of minimal types}
    Let $T$ be a finite-rank supersimple theory. Suppose that it is not the case that the forking relation is stable over a base between types of rank $n$ in $T$. Then there is some finite set $C$, and $C$-indiscernible sequence $\{a_{i}, b_{i}\}_{i < \omega}$, such that $a_{i} \nind_{C} b_{j}$ exactly when $i > j$, and each $a_{i}, b_{j}$ is a set of size at most $n$ consisting of realizations of minimal types over $C$.
\end{fact}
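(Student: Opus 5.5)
We start from an indiscernible witness of instability and replace each term by a finite independent set of realizations of minimal types that forks with exactly the same sets, using Fact \ref{equidominancetheorem}.

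Suppose the forking relation is not stable over a base between types of rank $n$. Then there are a base $C$ and a $C$-indiscernible sequence $\{a_i,b_i\}_{i<\omega}$ with $a_i \nind_C b_j$ exactly when $i>j$ and $\mathrm{SU}(a_i/C),\mathrm{SU}(b_i/C)\le n$. By compactness and Erdős–Rado we stretch this to a long indiscernible sequence.

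\textbf{Step 1: shrink the base to a finite set.} Since $T$ is supersimple, there is a finite $C_0\subseteq C$ with $a_0b_0a_1b_1 \ind_{C_0} C$. Indiscernibility then gives the same for every pair of indices. For $i,j$ we have $a_i b_j \ind_{C_0} C$, so $a_i\nind_{C} b_j$ is equivalent to $a_i\nind_{C_0} b_j$. We then re-extract a $C_0$-indiscernible sequence, which still has the same forking pattern. This is harmless because the order-property configuration is preserved under taking indiscernible sequences realizing the EM-type. Rank is also preserved: $\mathrm{SU}(a_i/C_0)=\mathrm{SU}(a_i/C)$. We rename $C_0$ as $C$.

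\textbf{Step 2: apply equidominance.} By Fact \ref{equidominancetheorem} and the discussion following it, there are $B\supseteq C$ with $B\ind_C (a_i,b_i)_i$ and sets $\bar a_i\subseteq\mathrm{acl}(Ba_i)$, $\bar b_j\subseteq \mathrm{acl}(Bb_j)$. Each of these is a $B$-independent set of at most $n$ realizations of minimal types over $B$, and each forks over $B$ with exactly the same sets as the original element. To get $B$ working uniformly for the whole sequence, we choose $B$ independent from the entire sequence over $C$ and take the sequence indiscernible over $B$. This is possible by extension and re-extraction, since a sequence indiscernible over $C$ and independent from $B$ can be made $B$-indiscernible after moving $B$ by an automorphism over the sequence's EM-type. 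The equidominance witness for the pair $(a_0,b_0)$ is then copied along the sequence by choosing $\bar a_i,\bar b_i$ as images under automorphisms over $B$, using the fact that the defining formulas over $B$ are the same. Because $B\ind_C a_ib_j$, we have $a_i\nind_B b_j$ if and only if $a_i\nind_C b_j$. Equidominance then transfers this twice to give $\bar a_i\nind_B \bar b_j$ if and only if $a_i\nind_B b_j$.

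\textbf{Step 3: restore finiteness.} We apply Step 1 again to $B$, obtaining a finite $B_0$ over which the $\bar a_i,\bar b_j$ are still realizations of minimal types. Minimality over a finite subset is preserved by the choice $\bar a\bar b\ind_{B_0}B$, since SU-rank is preserved under nonforking restriction. We then extract an indiscernible sequence once more.

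\textbf{Main obstacle.} The hardest point is the uniformity in Step 2. Fact \ref{equidominancetheorem} is stated for a single type, so we must make sure that one base $B$ and equidominant representatives work simultaneously for all the $a_i$ and $b_j$, and that these representatives preserve the $i>j$ pattern. We will handle this by first taking $B$ to be a nonforking extension over the whole long sequence and then using indiscernibility over $B$ to transport the witness $\bar a_0$ (respectively $\bar b_0$) to all indices. Each $\bar a_i$ forks over $B$ exactly with the sets that $a_i$ does, by invariance of forking under $B$-automorphisms.
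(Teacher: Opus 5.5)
Your overall route is the paper's. You enlarge the base independently so that Fact \ref{equidominancetheorem} supplies equidominant sets of minimal realizations. You make the sequence indiscernible over the enlarged base while keeping it independent, copy the witnesses along the sequence, transfer the pattern by equidominance plus symmetry, and shrink to a finite base by supersimplicity. Your Step 1 is redundant given Step 3.

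\textbf{Main gap.} The step you flag as the main obstacle is not handled by the mechanism you describe. The witnesses $\bar a_0,\bar b_0$ exist for the particular extensions $\mathrm{tp}(a_0/B)$ and $\mathrm{tp}(b_0/B)$, that is, for the particular type $\mathrm{tp}(a_0b_0B/C)$. Nothing says other nonforking extensions of $\mathrm{tp}(a_0/C)$ to $B$ carry such witnesses. Suppose you take $B\ind_C I$ by extension, re-extract a $B$-indiscernible $I''$, and move $B$ by a $C$-automorphism $\sigma$ with $\sigma(I'')=I$. Then the new base $B'=\sigma(B)$ satisfies $a_0b_0B'\equiv_C a''_0b''_0B$. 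But $\mathrm{tp}(a''_0b''_0/B)$ is only approximated by the various $\mathrm{tp}(a_ib_i/B)$ and need not equal $\mathrm{tp}(a_0b_0/B)$, so the witnesses for $a_0,b_0$ over $B'$ are lost. The missing tool is the chain condition for forking in simple theories, which is exactly what the paper invokes. Since $B\ind_C a_0b_0$ and $I$ is $C$-indiscernible, there is $B'\equiv_{Ca_0b_0}B$ with $I$ indiscernible over $B'$ and $B'\ind_C I$. This keeps $\mathrm{tp}(a_0b_0/B')$, hence the witnesses. It also gives the independence from the pairs $a_ib_j$ that your transfer $a_i\nind_B b_j\Leftrightarrow a_i\nind_C b_j$ requires.

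\textbf{Smaller points.} First, Fact \ref{equidominancetheorem} treats one type at a time, so a single $B$ serving both $a_0$ and $b_0$ needs an argument. It works because witnesses survive an enlargement $B^{*}\supseteq B$ with $B^{*}\ind_B a\bar a$. Indeed, by transitivity $a\nind_{B^{*}}X$ iff $a\nind_B B^{*}X$, likewise for $\bar a$, and minimality and independence of $\bar a$ persist. So a base for $a_0$ and one for $b_0$ can be amalgamated independently. The paper sidesteps this with two rounds: it first replaces the $a$'s over $D$, then the $b$'s over $C\supseteq D$, noting that the new $a_i$ remain sets of realizations of minimal (or algebraic) types over $C$.

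Second, choose the copies so that $\{a_i\bar a_i,b_i\bar b_i\}_i$ is $B$-indiscernible, as the paper does; this is possible because $I$ is $B$-indiscernible. Copying by arbitrary automorphisms leaves the new sequence non-indiscernible. Then the appeal to indiscernibility in your Step 3 is unavailable unless you first re-extract via Erd\H{o}s--Rado over $B$.
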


\begin{proof}
    Because it is not the case that the forking relation is stable over a base between types of rank $n$ in $T$, there is some set $E$, and $E$-indiscernible sequence $\{a'_{i}, b'_{i}\}_{i < \omega}$, such that $a'_{i} \nind_{E} b'_{j}$ exactly when $i > j$, and $\mathrm{SU}(a'_{i}/E), \mathrm{SU}(b'_{i}/E) \leq n$. Then by the discussion after the previous fact, for $p(x) = \mathrm{tp}(a'_{i}/ E)$, for some $D \supset E$ there is a nonforking extension $p'(x) \in S(D)$ of $p(x)$ such that $p'(x)$ satisfies the following property:
    
    (*): for $a \models p'(x)$, there is some set $\overline{a}$, consisting of at most $n$ realizations of minimal types over $D$, which forks over $D$ with all of the same sets as $a$.

    So there is some $D \supset E$ with $D \ind_{E} a'_{0}$ such that $\mathrm{tp}(a'_{0}/ D)$ satisfies (*). By the chain condition for forking in simple theories, $D$ can be chosen such that $\{a'_{i}\}_{i < \omega}$ is indiscernible over $D$, and $D \ind_{E} \{a'_{i}\}_{i < \omega}$. Moreover, such a set $D \supset E$ can be chosen such that  $D \ind_{E \{a'_{i}\}_{i < \omega}} \{b'_{i}\}_{i < \omega}$. Therefore, $D \ind_{E} \{a'_{i}, b'_{i}\}_{i < \omega}$. Finally, such a set $D \supset E$ can additionally be chosen so that $ \{a'_{i}, b'_{i}\}_{i < \omega}$ is indiscernible over $D$. Now, because $D \ind_{E} \{a'_{i}, b'_{i}\}_{i < \omega}$, $\{a'_{i}, b'_{i}\}_{i < \omega}$ is also a $D$-indiscernible sequence exhibiting the order property for $x \ind_{D} y$: $a'_{i} \nind_{D} b'_{j}$ exactly when $i > j$. Since $\mathrm{tp}(a'_{0}/ D)$ satisfies (*), we can find some set $a_{0}$, consisting of at most $n$ realizations of minimal types over $D$, which forks over $D$ with all of the same sets as $a'_{0}$. So because $\{a'_{i}\}_{i < \omega}$ is indiscernible over $D$, for each $i < \omega$ we can find $a'_{i}$ with $a'_{i} a_{i} \equiv_{D} a'_{0}a_{0} $. Because $ \{a'_{i}, b'_{i}\}_{i < \omega}$ is indiscernible over $D$, such $a_{i}$ can additionally be chosen such that $ \{a_{i}, b'_{i}\}_{i < \omega}$ is indiscernible over $D$.  Because, for $i < \omega$, $a'_{i} a_{i} \equiv_{D} a'_{0}a_{0} $, $a_{i}$ will likewise consist of at most $n$ realizations of minimal types over $D$, and will fork over $D$ with all of the same sets as $a'_{i}$. By this last clause, and the fact that $a'_{i} \nind_{D} b'_{j}$ exactly when $i > j$, $a_{i} \nind_{D} b'_{j}$ exactly when $i > j$.

    Similarly, we may find $C \supset D$ and $b_{i}$ for $i < \omega$ such that $ \{a_{i}, b_{i}\}_{i < \omega}$ is indiscernible over $C$, $a_{i} \nind_{C} b_{j}$ exactly when $i > j$, and each $b_{i}$ is a set of size at most $n$ consisting of realizations of minimal types over $C$. But it remains the case, as over $D \subset C$, that each $a_{i}$ is a set of size at most $n$ consisting of realizations of minimal types over $C$. (Technically, if we don't assume $ C \ind_{D} \{a_{i}, b'_{i}\}_{i < \omega}$, we only get that $a_{i}$ is a set of size at most $n$ consisting of realizations of minimal types and algebraic types over $C$. But we can, of course, ignore the algebraic types over $C$.)
    
    So we have some set $C$, and $C$-indiscernible sequence $\{a_{i}, b_{i}\}_{i < \omega}$, such that $a_{i} \nind_{C} b_{j}$ exactly when $i > j$, and each $a_{i}, b_{j}$ is a set of size at most $n$ consisting of realizations of minimal types over $C$. We just need to show that we can additionally assume $C$ to be finite. By supersimplicity, there is some finite $C_{0} \subset C$ such that the two types $\mathrm{tp}(a_{i}a_{j})$ for $i > j$ and $\mathrm{tp}(a_{i}a_{j})$ for $i > j$ do not fork over $C_{0}$. Thus $a_{i} \nind_{C} b_{j}$ exactly when $i > j$, and each $a_{i}, b_{j}$ remains a set of size at most $n$ consisting of realizations of minimal types over $C_{0}$. So replacing $C$ with $C_{0}$, we can assume $C$ to be finite, as desired.
\end{proof}

From this fact, we obtain the implicitly well-known interpretation of stability for the forking relation over a base in terms of pregeometries within finite-rank supersimple theories.

\begin{fact}\label{standard pregeometric interpretation, restated}

Let $n < \omega$. Then there is a set $\mathcal{G}_{n}$ of (infinite) pregeometries, depending only on $n$, such that the following are equivalent for any finite-rank supersimple theory $T$:

    \begin{itemize}
        \item In $T$, the forking relation is stable over a base between types of rank $n$.
        \item No pregeometry in $\mathcal{G}_{n}$ embeds into the pregeometry on an $\mathrm{SU}$-rank-$1$ partial type over a finite set in $T$.
    \end{itemize}
    
\end{fact}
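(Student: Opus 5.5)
The plan is to define $\mathcal{G}_{n}$ explicitly as the set of all ``abstract order-property configurations'' of rank $n$, and then check both directions of the equivalence using Fact \ref{reduction to sets of realizations of minimal types}. Concretely, consider the index set $S = \{(a,i,k), (b,i,k) : i < \omega, k < n\}$. Let $\mathcal{G}_{n}$ consist of every pregeometry $(S, \mathrm{cl})$ with the following properties:
\begin{itemize}
\item it is \emph{indiscernible}: every order-preserving map $\omega \to \omega$ on the index $i$ induces an automorphism of $(S,\mathrm{cl})$ onto its image, i.e.\ an embedding;
\item each block $\{(a,i,k)\}_{k<n}$ and $\{(b,i,k)\}_{k<n}$ has dimension at most $n$ (automatic);
\item $\dim(A_{i}B_{j}) < \dim(A_{i}) + \dim(B_{j})$ exactly when $i > j$, where $A_{i}, B_{j}$ denote the blocks.
\end{itemize}
To permit sets of size less than $n$ and repeated elements, we allow coordinates inside a block to coincide in closure (loops and parallel elements are fine in a pregeometry), or we simply take the union over all sizes $\leq n$. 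This $\mathcal{G}_{n}$ depends only on $n$.

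\textbf{Stability fails $\Rightarrow$ embedding.} Suppose stability fails in rank $n$. By Fact \ref{reduction to sets of realizations of minimal types}, we obtain a finite $C$ and a $C$-indiscernible sequence $\{a_{i},b_{i}\}_{i<\omega}$ with $a_{i}\nind_{C} b_{j}$ iff $i>j$, where each $a_{i}, b_{j}$ lists at most $n$ realizations of minimal types over $C$. The issue is that these realizations may lie in different minimal types. Let $p(x)$ be the disjunction (as a partial type over the finite set $C$, padding tuples to a common length, or using a sort-disjoint union) of the finitely many minimal types occurring. This disjunction still has $\mathrm{SU}$-rank $1$ by Definition \ref{su-rank of a partial type}, since every realization has rank $\leq 1$. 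The map sending $(a,i,k)$ to the $k$-th element of $a_{i}$ (and similarly for $b$) induces a pregeometry on $S$ by pulling back $\mathrm{acl}_{C}$. In a finite-rank theory, forking between $a_{i}$ and $b_{j}$ over $C$ is exactly a strict drop in $\mathrm{acl}_{C}$-dimension, $\mathrm{SU}(a_{i}b_{j}/C) < \mathrm{SU}(a_{i}/C)+\mathrm{SU}(b_{j}/C)$, because $\mathrm{SU}$-rank over $C$ of a set of rank-one elements equals its dimension. $C$-indiscernibility gives the indiscernibility clause, so the pulled-back pregeometry lies in $\mathcal{G}_{n}$. The map is not injective if elements repeat, so we either quotient in the definition of $\mathcal{G}_{n}$ (i.e.\ index by actual elements with a coloring) or note that we may assume the tuples consist of distinct elements.

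\textbf{Embedding $\Rightarrow$ stability fails.} Given an embedding $\iota$ of some $G\in\mathcal{G}_{n}$ into the pregeometry on a minimal partial type $p$ over finite $C$, set $a_{i} = \iota(A_{i})$ and $b_{j} = \iota(B_{j})$. The dimension condition transfers, so $a_{i}\nind_{C} b_{j}$ iff $i>j$, with $\mathrm{SU}(a_{i}/C)\leq n$. The sequence need not be $C$-indiscernible, but since it is infinite, and by compactness we can take it of length $\kappa$ using the indiscernibility of $G$ (order-preserving reindexings are embeddings, so finite configurations of any length appear), the Erdős–Rado form of the order property in Conjecture \ref{stability of the forking relation over a base} applies and extracts a $C$-indiscernible witness. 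Forking is type-invariant and preserved along the extraction, since the relation is determined by types over $C$.

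\textbf{Main obstacle.} The main subtlety is the compactness step: stretching an $\omega$-indexed configuration to length $\kappa$ while preserving $\nind_{C}$, which is not first-order (it is type-definable only negatively). I would handle this by choosing $\mathcal{G}_{n}$ to be indexed by $\kappa$ rather than $\omega$, or more simply by using Ramsey and compactness on $\mathrm{EM}$-types. Here $\nind_{C}$, for rank-one elements, is witnessed by an algebraic formula, which \emph{is} preserved. Independence is witnessed by non-algebraicity, which is preserved by taking the EM-type of an already indiscernible-indexed family. So I would rely on the fact that finite-rank dimension drops are witnessed by algebraic formulas.
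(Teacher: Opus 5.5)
\paragraph{Where the proposal breaks.} Your direction from instability to an embedding is essentially the paper's: reduce to at most $n$ realizations of minimal types over a finite $C$, take the finite disjunction $p$, and use that forking is a drop in $\mathrm{acl}_C$-dimension. The paper avoids your injectivity worry by letting $S=\bigcup_i(a_i\cup b_i)$ be a union of possibly overlapping sets.

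The gap is in the converse, along the route you actually commit to: an $\omega$-indexed $G$, stretched or extracted via compactness, Ramsey and EM-types. An embedding of pregeometries transfers only the closure relation, not any first-order witness. For $i>j$, the dependence $a_i\nind_C b_j$ means that some $e\in b_j$ and $B_0\subseteq b_j$ satisfy $e\in\mathrm{acl}_C(a_iB_0)\setminus\mathrm{acl}_C(B_0)$. The membership part is witnessed by an algebraic formula $\psi_{ij}$ with a bound $k_{ij}$.

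However, membership in $\mathrm{acl}_C$ is an infinite disjunction, not a type. Nothing makes $\psi_{ij}$ and $k_{ij}$ independent of $(i,j)$: invariance of the abstract $(S,\mathrm{cl})$ under order-preserving reindexing says nothing about them. With infinitely many candidate witnesses, Ramsey's theorem on an $\omega$-sequence cannot homogenize them. So no single formula of the form $\psi(x,\bar y)\wedge\exists^{\le k}x\,\psi(x,\bar y)$ need lie in the EM-type. For the same reason, you cannot write down a partial type expressing the dependences in order to apply compactness.

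The extracted indiscernible sequence is only guaranteed to satisfy the type-definable (non-algebraicity) conditions, i.e.\ lower bounds on dimension. The dependences $a'_i\nind_C b'_j$ for $i>j$, and even dependences inside a block, can disappear in the limit. So the claim ``$\nind_C$ is witnessed by an algebraic formula, which is preserved'' is not justified.

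\paragraph{The repair.} The fix is the alternative you mention only in passing, and it is exactly the paper's proof: index $\mathcal{G}_n$ by the Erd\H{o}s--Rado cardinal $\kappa$ over a finite set.

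In the converse direction you then need neither the indiscernibility clause nor compactness. The image of the embedding is already a sequence $\{a_i,b_i\}_{i<\kappa}$ with $a_i\nind_C b_j$ exactly when $i>j$ and $\mathrm{SU}(a_i/C),\mathrm{SU}(b_i/C)\le n$. That is precisely the equivalent long form of instability in Conjecture \ref{stability of the forking relation over a base}.

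This works where the $\omega$-route fails because Erd\H{o}s--Rado homogenizes complete types over $C$, of which there are boundedly many. Each pair in the extracted sequence therefore has the same type over $C$ as some original pair with the same order pattern, and $\nind_C$ is invariant under $\mathrm{Aut}(\mathbb{M}/C)$.

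In the direction from instability to an embedding, you must then stretch the $C$-indiscernible $\omega$-sequence from Fact \ref{reduction to sets of realizations of minimal types} to length $\kappa$. This is harmless by compactness, because that sequence is indiscernible in the model-theoretic sense, not merely as a pregeometry.
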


\begin{proof}
Let $\kappa$ be large enough to apply the Erdős–Rado theorem over a finite set. Then define $\mathcal{G}_{n}$ to be the set consisting of pregeometries $(S, \mathrm{cl})$, where $S$ can be written as a union $\bigcup_{i < \kappa} (a_{i} \cup b_{i})$ for sets $a_{i}, b_{i}$ of size at most $n$, such that $\mathrm{dim}(a_{i} \cup a_{j}) < \mathrm{dim}(a_{i}) + \mathrm{dim}(a_{j})$ exactly when $i > j$.

We first show that the first bullet point implies the second. Suppose that the pregeometry on some minimal partial type $p(x)$ over a finite set $C$ embeds a pregeometry in $\mathcal{G}_{n}$. Define the sequence $\{a_{i}, b_{i}\}_{i < \kappa}$ so that, by abuse of notation, $a_{i}, b_{i}$ is the image of $a_{i}, b_{i}$ under the embedding. Then $a_{i} \nind_{C} b_{j}$ exactly when $i > j$, and $a_{i}, b_{i}$ have $\mathrm{SU}$-rank at most $n$ over $C$. So the forking relation is unstable over a base between types of rank $n$, as desired.

It remains to show that the second bullet point implies the first. Suppose the forking relation is unstable over a base between types of rank $n$. Then by the previous fact, for some finite set $C$ there is some $C$-indiscernible sequence $\{a_{i}, b_{i}\}_{i < \omega}$, such that $a_{i} \nind_{C} b_{j}$ exactly when $i > j$, and each $a_{i}, b_{j}$ is a set of size at most $n$ consisting of realizations of minimal types over $C$. We may extend $\{a_{i}, b_{i}\}_{i < \omega}$ to a $C$-indiscernible sequence $\{a_{i}, b_{i}\}_{i < \kappa}$ satisfying the same conditions. Then there are $\mathrm{SU}$-rank-$1$ types $p_{1}(x), \ldots p_{k}(x) \in S(C)$ for $k \leq 2n$ such that for each $i < \kappa$, each of the realizations of $\mathrm{SU}$-rank-$1$ types over $C$ within the sets of realizations $a_{i}, b_{i}$ satisfies one of the $p_{\ell}(x)$. Recall that the disjunction of finitely many types, and particularly complete types, is a partial type. Let $p(x)$ be the disjunction of all of the  $p_{1}(x), \ldots p_{k}(x)$ (i.e., the type defining the union of the sets of realizations of $p_{1}(x), \ldots p_{k}(x)$). Then $p(x)$ has $\mathrm{SU}$-rank $1$, because each realization satisfies one of the $\mathrm{SU}$-rank-$1$  complete types $p_{1}(x), \ldots p_{k}(x)$. Moreover, each of the realizations within each $a_{i}, b_{i}$ satisfies $p(x)$. So the sequence $\{a_{i}, b_{i}\}_{i < \kappa}$ gives an embedding from a member of $\mathcal{G}_{n}$ into the pregeometry on $p(x)$, as desired. (For the injectivity, note that none of the realizations in $a_{i}$ will be equal to one of the realizations in one of the $b_{j}$; otherwise $a_{i} \nind_{C} b_{j}$ for all $i, j$.)
\end{proof}

Of course, as this proof shows, the $\mathrm{SU}$-rank-$1$ partial types we are talking about in this fact are all just finite disjunctions of complete types of $\mathrm{SU}$-rank $1$; the same will be true for the main theorem of this paper, where we show $\mathcal{G}_n$ can be taken to be \textit{finite}.

\section{An existential Peretz theorem}\label{an existential Peretz theorem}

The goal of the next two sections will be to prove the main theorem of this paper, as described in the introduction. Again, this says that, within the well-known pregeometric interpretation of stability of the forking relation over a based between types of rank $n$ (Fact \ref{standard pregeometric interpretation, restated}), this stable forking hypothesis actually reduces to a \textit{finite} collection $\mathcal{G}_{n}$ of pregeometries.

\begin{theorem}\label{main theorem, restated}

Let $n < \omega$. Then there is a \emph{finite} set $\mathcal{G}_{n}$ of (infinite) pregeometries, depending only on $n$, such that the following are equivalent for any finite-rank supersimple theory $T$:

    \begin{itemize}
        \item In $T$, the forking relation is stable over a base between types of rank $n$.
        \item No pregeometry in $\mathcal{G}_{n}$ embeds into the pregeometry on an $\mathrm{SU}$-rank-$1$ partial type over a finite set in $T$.
    \end{itemize}

\end{theorem}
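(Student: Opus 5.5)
The plan is to keep the framework of Fact \ref{standard pregeometric interpretation, restated} and shrink the witnessing configurations to a finitely describable form. The direction "embedding of a member of $\mathcal{G}_{n}$ implies instability" will go through exactly as before, provided every member of the new $\mathcal{G}_{n}$ is itself an instance of the order property for dependence in the pregeometric sense. So all the work is in the converse. From instability in rank $n$ I will extract a witness whose full pregeometric type is determined by a finite amount of data ranging over a finite set depending only on $n$. Then $\mathcal{G}_{n}$ can be taken to be the finitely many resulting pregeometries.

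\textbf{Step 1 (existential Peretz theorem).} Start from Fact \ref{reduction to sets of realizations of minimal types}: a finite $C$ and a $C$-indiscernible $\{a_i,b_i\}_{i<\omega}$ with $a_i \nind_C b_j$ iff $i>j$, where each $a_i, b_j$ consists of at most $n$ realizations of minimal types over $C$. I aim to modify $C$ and the sequence so that, in addition, $\{b_i\}_{i\geq n-1}$ is a Morley sequence over $Cb_0\ldots b_{n-2}$.

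The induction I would try runs on the number of $b$'s that must be absorbed into the base. Suppose $\{b_i\}_{i \geq k}$ is not Morley over $Cb_0\ldots b_{k-1}$. Then $\mathrm{SU}(b_k/Cb_0\ldots b_{k-1})$ strictly exceeds its limiting value. Naming $b_0,\ldots,b_{k-1}$ (together with suitable $a$'s) as constants, I use the independence theorem over the enlarged base to re-amalgamate copies of the $a_i$. The goal is that the forking pattern $a_i \nind b_j \iff i>j$ survives over the larger base. Since $\mathrm{SU}(b_i/C)\le n$ and each such step drops a rank, after at most $n-1$ steps the sequence stabilizes into a Morley sequence.

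I expect this step to be the \textbf{main obstacle}. The difficulty is preserving the \emph{non}-forking half ($a_i \ind b_j$ for $i\le j$) after enlarging the base. This is exactly where Peretz's universal argument breaks in rank $>3$ (cf. the independent-copy counterexample), and why only an existential statement is available. What I would try first is to choose the new base by shifting the sequence: use $b_0\ldots b_{k-1}$ and the tail $\{a_i,b_i\}_{i\ge k}$, and check both halves of the order property via the independence theorem and indiscernibility.

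\textbf{Step 2 (finite combinatorial data).} With $B:=b_0\ldots b_{n-2}$, the pregeometry on $\bigcup_i (a_i\cup b_i)$ over $C$ is governed by $\mathrm{acl}$-relations among the at most $n$ points of each term. Indiscernibility and the Morley property over $CB$ make the dimension function on finite unions of $b_i$'s ($i\ge n-1$) determined by the finitely many ranks of $b_i$ over $CB$ and of $B$.

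Following the multi-experiment argument (Corollary 3.2 of \cite{Ovc22}), I then pass to a subsequence and thin each term to its individual minimal realizations. The aim is an explicit finite configuration: a bounded number of points of $B$, and $a_i, b_i$ of size at most $n$, such that $\dim(a_i\cup b_j)<\dim a_i+\dim b_j$ iff $i>j$. This configuration should be forced by a dimension table on indices that depends only on the order-type of finitely many indices among $\{i,j,n-1\}$. By indiscernibility, the whole pregeometry on $B\cup\bigcup_i(a_i\cup b_i)$ is determined by the dimensions of unions involving a bounded number of terms, all with values at most $n\cdot(\text{bounded})$. So only finitely many such pregeometries arise, up to isomorphism, for the chosen long index set $\kappa$.

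\textbf{Step 3.} Define $\mathcal{G}_{n}$ as this finite list of pregeometries obtained in Step 2, restricting to those exhibiting the order property. Instability yields an embedding of one of them into the pregeometry on the disjunction of the finitely many minimal types, exactly as in the proof of Fact \ref{standard pregeometric interpretation, restated}. Conversely, any embedding produces the order property for $\nind_C$ between types of rank $\le n$.

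A residual subtlety is that the dimension data must really determine the pregeometry. I would handle this by first making the sequence indiscernible for the finitely many relevant dimension patterns (Ramsey/Erdős–Rado). I would then note that "Morley over $CB$" forces free amalgamation of the tail terms over $B$, which pins down the pregeometry on all finite subsets.
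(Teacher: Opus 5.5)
Your outline matches the paper's architecture, but both substantive steps have genuine gaps. \textbf{Step 1} is the heart of the theorem, and your sketch does not supply it. The mechanism you describe also fails as stated.

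Absorbing $b_0,\ldots,b_{k-1}$ into the base need not lower any rank. If any $k+1$ terms of $\{b_i\}$ are independent over $C$ but the sequence is not Morley (the situation behind Observation \ref{failure of universal Peretz theorem}), then $\mathrm{SU}(b_k/Cb_0\ldots b_{k-1})=\mathrm{SU}(b_k/C)$ while the limit type still forks over $Cb_0\ldots b_{k-1}$. So ``each step drops a rank'' is unjustified, and counting ranks of the $b_i$ cannot give a uniform bound.

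The bound $n-1$ comes from the $a$-side instead. Work with the two-sided form of a witness: $a\nind_C b_0$, $a\ind_C b_\omega$, and the halves $\{b_i\}_{i<\omega}$, $\{b_i\}_{\omega\le i<\omega+\omega}$ mutually indiscernible over $Ca$. One cannot have $a\nind_{Cb_0\ldots b_{i-1}}b_i$ for every $i\le n-1$, since that would put $a$ in $\mathrm{acl}(Cb_0\ldots b_{n-1})$, which is incompatible with $a\ind_C b_\omega$. So some $0<k\le n-1$ has $a\ind_{Cb_0\ldots b_{k-1}}b_k$.

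The missing idea is to replace your iteration by an extremal choice over all witnesses and all bases. First minimize $\mathrm{SU}(a/C)$, then maximize the rank $m$ of the limit type, and suppose the limit type forks over $Cb_0\ldots b_{n-2}$. There are two cases.
If $a\nind_{Cb_0\ldots b_{k-1}}b_\omega$, then reversing the order of $\{b_i\}_{k\le i<\omega+\omega}$ over the new base $Cb_0\ldots b_{k-1}$ gives a witness with smaller $\mathrm{SU}(a/\cdot)$.
If $a\ind_{Cb_0\ldots b_{k-1}}b_\omega$, the independence theorem over $D=\mathrm{acl}^{\mathrm{eq}}(Cb_0\ldots b_{k-1})$ amalgamates copies of $ab_k$ and $ab_\omega$ along a Morley sequence in $\mathrm{tp}(b_k/D)$. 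This produces a witness over the \emph{original} base $C$, with the same $\mathrm{SU}(a/C)$, whose limit type has rank $\mathrm{SU}(b_k/D)>m$.
Either way the extremal choice is contradicted. The proof never has to preserve the order pattern over an enlarged base, which is exactly the obstacle you could not get past.

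\textbf{Step 2} has a second gap: finiteness does not follow from Step 1's output plus indiscernibility.

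First, Step 1 constrains only the $b_i$. Nothing prevents adjoining to each $a_i$ an independent copy of an indiscernible sequence (the construction behind Observation \ref{failure of universal Peretz theorem}, moved to the $a$-side). This preserves the order property and the Morley property of $\{b_i\}_{i\ge n-1}$ over $CB$, while importing that sequence's pregeometry. The paper repairs this in Proposition \ref{small canonical base}. Take the canonical base $E$ of the limit type of $\{b_i\}$; it has rank at most $n^2$, since $E\subseteq\mathrm{acl}^{\mathrm{eq}}(Cb_0\ldots b_{n-2})$. The $a_i$ are re-chosen as $a^*_i$ independent over $E\{b_i\}$, so that the whole sequence $\{a^*_ib_i\}$ is Morley over a set of rank at most $n^2$.

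Second, even then, free amalgamation over the base only computes dimensions of sets containing the base, or of unions of whole terms. The pregeometry lives on individual points. For a finite set $X$ of points, $\dim X=\dim(X\cup B)-\dim(B/X)$, and nothing you say shows $\dim(B/X)$ is determined by boundedly many slices of $X$. A table of $\dim(a_i\cup b_j)$ indexed by order types certainly does not determine it.

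Your appeal to indiscernibility is exactly the claim that needs proof, and it is the content of Lemma \ref{refined Pillay-Scanlon argument}. Build a tree of height at most $m$ recording which slices of successive terms fork with the canonical base over the slices already chosen. For each finite $S$, a greedy path through this tree selects a core $f(S)$ of at most $m$ slices, over which the remaining slices of $S$ are independent. Hence $\dim S$ is computed additively from dimensions inside $m+1$ terms, and only finitely many trees are possible.
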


Throughout the rest of this section and paper, we will assume that any theory we are referring to is a finite-rank supersimple theory.

The goal of this section will be to present the more theoretically powerful part of the argument for this theorem. We will start with a $C$-indiscernible sequence $\{a_i, b_i\}_{i < \omega}$ giving an instance of the order property for the forking relation $x\nind_{C} y$ over a base $C$. From this, we will obtain a bound on the size of the initial segment of, say, $\{ b_i\}_{i < \omega}$  over which we can assume the limit type of $\{ b_i\}_{i < \omega}$ does not fork (relative to the base $C$). The bound obtained in this section will later extend to the limit type of $\{a_i, b_i\}_{i < \omega}$ over $C$; when the $a_{i}, b_{i}$ are each sets of individual realizations of an $\mathrm{SU}$-rank-$1$ partial type $p(x)$ over $C$, the arguments of the next section will use this bound to describe the forking between the realizations of $p(x)$ within $\{a_i, b_i\}_{i < \omega}$. Our description of forking between these realizations will be full enough that we may produce pregeometries within the original $\mathcal{G}_{n}$ from one of finitely many possibilities.

The least-rank open case of stability of the forking relation over a base (even assuming countable categoricity) is between types of rank $3$. In rank $3$, our core segment-bounding argument is already implicitly proven by Peretz in \cite{Per06}. So the case $n = 3$ of our main theorem is really due to Peretz, up to an especially low-rank case of the refined multi-experiment argument from the next section. We will start by stating Peretz's theorem, and extracting the segment-bounding result in rank $3$ that we would like to generalize to higher ranks. In higher ranks, the result breaks down in a way Peretz may not have foreseen: we can only prove an \textit{existence} statement for an instance $\{a_i, b_i\}_{i < \omega}$ of the order property for $x\nind_{C} y$ (under the obligatory background assumption that the forking relation \textit{is} unstable over a base between types of rank $n$), rather than a universal statement as in rank $3$. Fortuitously for us, despite the divergence in higher ranks from Peretz's theorem, the main theorem we wish to prove is exactly the kind that can tolerate this leap from the universal into the existential.

Let $\{a_i, b_i\}_{i < \omega}$ be a $C$-indiscernible sequence exhibiting the instability of $x \nind_{C} y$ between types of rank $3$, so $a_{i}, b_{i}$ have $\mathrm{SU}$-rank $3$ over $C$ and $a_{i} \nind_{C} b_{j}$ exactly when $i > j$. In our language (and following Peretz's observation that his proofs work for the forking relation over a base in general, outside of his chosen setting of countably categorical theories), Peretz proves the following. The sequence $\{b_i\}_{i < \omega}$, in the terminology of Peretz, will be \textit{$3$-$n$} for $n =1, 2$ if $\mathrm{SU}(b_{1}/Cb_{0})= n$, and \textit{$3$-$3$-$n$} for $n =1, 2$ if $b_{1} \ind_{C} b_{0}$, $\mathrm{SU}(b_{2}/Cb_{0}b_{1})= n$. Peretz notes that the four \textit{prima facie} possibilities for $\{b_i\}_{i < \omega}$ are that it is $3$-$2$, $3$-$1$, $3$-$3$-$2$ and $3$-$3$-$1$.

Peretz's theorem, Theorem 3 of \cite{Per06}, states that among these possibilities, $\{b_i\}_{i < \omega}$ must be $3$-$3$-$1$. Like his argument for stability of the forking relation over a base between types of rank $2$, which is later supplanted by Brower (\cite{Brower2012}), Peretz directly applies the independence theorem in the proof of this theorem. This differs from arguments elsewhere in the existing literature on the stable forking conjecture. The central consequence of Peretz's result, at least for our purposes, give a bound on the initial segment over which the limit type of $\{b_i\}_{i < \omega}$ does not fork (again, relative to the base $C$). Recall that, for $I = \{c_{i}\}_{i < \omega}$ an $A$-indiscernible sequence, the \textit{limit type} $\mathrm{lim}_{A}(I^{+}/I)$ of $\{c_{i}\}_{i < \omega}$ over $A$ is just $\mathrm{tp}(c_{\omega}/A \{c_{i}\}_{i < \omega} )$, where $\{c_{i}\}_{i < \omega}$ is extended to an $A$-indiscernible sequence $\{c_{i}\}_{i \leq \omega}$. If the $C$-indiscernible sequence $I:=\{b_i\}_{i < \omega}$ must be $3$-$3$-$1$, then $\mathrm{SU}(\mathrm{lim}_{C}(I^{+}/I))=1$: $\mathrm{SU}(\mathrm{lim}_{A}(I^{+}/I))\neq 0$ because otherwise $\{b_i\}_{i < \omega}$ would be constant, contradicting that $\{a_i, b_i\}_{i < \omega}$ exhibits the instability of $x \nind_{C} y$. But the restriction of the limit type of $\{b_i\}_{i < \omega}$ over $C$ to $Cb_{0}b_{1}$ is $\mathrm{tp}(b_{2}/Cb_{0}b_{1})$, so has $\mathrm{SU}$-rank $1$. Thus the limit type of $\{b_i\}_{i < \omega}$ over $C$ does not fork over $b_{0}b_{1}$. In conclusion, Peretz shows the following fact:

\begin{fact} (Peretz, \cite{Per06})
\label{Peretz theorem}

Let $\{a_i, b_i\}_{i < \omega}$ be a $C$-indiscernible sequence exhibiting the instability of $x \nind_{C} y$ between types of rank $3$, so $a_{i}, b_{i}$ have $\mathrm{SU}$-rank $3$ over $C$ and $a_{i} \nind_{C} b_{j}$ exactly when $i > j$. Then the limit type of $\{b_i\}_{i < \omega}$ over $C$ does not fork over $Cb_{0}b_{1}$, so $\{b_{i}\}_{i \geq 2}$ is a Morley sequence over $Cb_{0}b_{1}$.

\end{fact}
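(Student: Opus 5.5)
The plan is to reduce the statement to a classification of the ``shape'' of the $C$-indiscernible sequence $I = \{b_i\}_{i<\omega}$, following Peretz's own case analysis. Then read off the non-forking of the limit type by the rank computation already recorded in the text above.

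\textbf{Setting up the cases.} Since $\mathrm{SU}(b_i/C) = 3$, the sequence is determined up to its ``rank profile'': either $\mathrm{SU}(b_1/Cb_0) \in \{1,2\}$ (the cases $3$-$1$, $3$-$2$), or $b_1 \ind_C b_0$ and then $\mathrm{SU}(b_2/Cb_0b_1) \in \{0,1,2,3\}$. Three of these cases are disposed of directly.
\begin{itemize}
\item Rank $0$ is impossible. It would make $b_2 \in \mathrm{acl}(Cb_0b_1)$, so by indiscernibility the sequence takes finitely many values, hence is constant. Then $a_1 \nind_C b_0 = b_1$, contradicting $a_1 \ind_C b_1$.
\item Rank $3$ at every stage means $I$ is a Morley sequence over $C$. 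This case I would rule out by a rank-counting argument. By indiscernibility, $a_4 \nind_C b_j$ for each $j<4$, while $a_4 \ind_C b_{\geq 4}$. Using that $\mathrm{tp}(b_j/Cb_{<j}) = \mathrm{tp}(b_4/Cb_{<j})$ does not fork over $C$, I would show $a_4 \nind_{Cb_{<j}} b_j$ for each $j<4$. Roughly: if $a_4 \ind_{Cb_{<j}} b_j$, then an application of the independence theorem over (a model containing) $Cb_{<j}$ would amalgamate the dependent type with $b_j$ and the independent type with $b_4$, contradicting the indiscernibility pattern. Then $\mathrm{SU}(a_4/C)$ drops at each of four stages, which is impossible as $\mathrm{SU}(a_4/C) = 3$.
\item The case $3$-$3$-$3$-$\cdots$ with a later drop is handled similarly.
\end{itemize}

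\textbf{The main obstacle.} This is ruling out $3$-$2$, $3$-$1$ and $3$-$3$-$2$, which is the content of Theorem 3 of \cite{Per06}. Here I would imitate Peretz: in each case, use the dependence between consecutive $b$'s together with $a_i \ind_C b_{\geq i}$ and $a_i \nind_C b_{<i}$. I would build, via the independence theorem (over Lascar strong types, after naming a model independent from everything, which is harmless by the chain-condition arguments as in the proof of Fact \ref{reduction to sets of realizations of minimal types}), a realization $a$ which is independent from some $b_j$ but forks with an element interalgebraic over $C\,b_{j-1}$ with a piece of $b_j$. The rank bookkeeping should then force $\mathrm{SU}(a/C) > 3$ or a forking/non-forking contradiction. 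I expect the $3$-$3$-$2$ case to be the delicate one, since there the dependence only appears over the two-element base $b_0b_1$, and the amalgamation must be done over $Cb_0b_1$ while keeping track of $a_2 \nind_C b_1$, $a_2 \ind_C b_2$.

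\textbf{Conclusion.} Once $I$ is known to be $3$-$3$-$1$, let $b_\omega$ realize the limit type $\mathrm{lim}_C(I^+/I)$. Its restriction to $Cb_0b_1$ is $\mathrm{tp}(b_2/Cb_0b_1)$, of $\mathrm{SU}$-rank $1$. Its $\mathrm{SU}$-rank over $CI$ is nonzero, since otherwise $I$ would be constant, as above. Hence $\mathrm{SU}(b_\omega/CI) = \mathrm{SU}(b_\omega/Cb_0b_1) = 1$, so the limit type does not fork over $Cb_0b_1$. Finally, a $Cb_0b_1$-indiscernible sequence whose limit type does not fork over $Cb_0b_1$ is Morley over it. Applying this to $\{b_i\}_{i\ge 2}$, which is indiscernible over $Cb_0b_1$, shows it is a Morley sequence over $Cb_0b_1$.
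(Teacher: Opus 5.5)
Your closing paragraph is exactly the paper's argument, and the paper gives nothing beyond it. The paper takes Peretz's Theorem 3 (together with his reduction to the four \emph{prima facie} shapes $3$-$2$, $3$-$1$, $3$-$3$-$2$, $3$-$3$-$1$) as a black box saying that $\{b_i\}_{i<\omega}$ is $3$-$3$-$1$. It then observes that the limit type has nonzero rank, while its restriction to $Cb_0b_1$ is $\mathrm{tp}(b_2/Cb_0b_1)$, of rank $1$. So where you defer to Theorem 3 you are on the paper's route, and you should simply cite it. Your ``main obstacle'' paragraph (an unspecified independence-theorem amalgamation followed by rank bookkeeping that ``should'' give a contradiction) is not an argument, and it cannot stand in for Peretz's case analysis. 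Note that by Observation \ref{failure of universal Peretz theorem} the corresponding universal statement can fail in higher ranks, so any genuine replacement has to use rank $3$ essentially.

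The side cases you prove yourself need repair.

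In the Morley case the independence theorem plays no role, and it is unclear what contradiction your amalgamation is supposed to produce. Plain transitivity suffices: if $a_4\ind_{Cb_{<j}}b_j$, then $b_j\ind_{Cb_{<j}}a_4$ together with $b_j\ind_C b_{<j}$ gives $b_j\ind_C a_4$, contradicting $a_4\nind_C b_j$.

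More seriously, a $3$-$3$-$3$ sequence with a later drop is \emph{not} handled similarly. Once $b_3\nind_C b_0b_1b_2$, the same trick produces only three strict drops, i.e.\ $a_4\in\mathrm{acl}(Cb_0b_1b_2)$, and there is no fourth drop to count. You need one more step, for instance the following. The sequence $\{b_i\}_{i\geq 3}$ is an infinite $Cb_0b_1b_2$-indiscernible sequence, hence indiscernible over $\mathrm{acl}^{\mathrm{eq}}(Cb_0b_1b_2)\ni a_4$. So $b_3\equiv_{Ca_4}b_5$, contradicting $a_4\nind_C b_3$ and $a_4\ind_C b_5$. This disposes of every shape beginning $3$-$3$-$3$ at once, Morley or not.

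Finally, your case list omits $\mathrm{SU}(b_1/Cb_0)=0$. That case is excluded by the same constancy argument as your rank-$0$ case.
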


As mentioned in the introduction, Peretz suggests generalizations of this result to higher ranks. But under a mild assumption, the universal result of Fact \ref{Peretz theorem}, which applies to \textit{every} sequence $\{a_i, b_i\}_{i < \omega}$ exhibiting instability of the forking relation over a base, will fail in higher ranks. The reason why the universal result will fail under this mild assumption is elementary: our assumption will yield an indiscernible sequence of rank-$2$ parameters whose limit type forks over an arbitrarily large initial segment, and we just add on the terms of an independently chosen copy of this sequence to the terms of the sequence $\{a_i, b_i\}_{i < \omega}$ exhibiting instability of the forking relation. We first detail the mild assumption.

Let $p(x) \in S(C)$ have $\mathrm{SU}$-rank $1$. The type $p(x)$ is \textit{k-linear} if $k$ is the maximum of the ranks $\mathrm{SU}(\mathrm{Cb}(q)/C)$ for $q = \mathrm{tp}(ab/A)$, $A \supset C$, $a, b \models p(x)$. (See, say, \cite{P96}, following \cite{Bu91}, for this definition.) As stated in, say, the introduction to \cite{BTW02}, every $1$-linear (or \textit{linear}) minimal type is one-based. Moreover, Hart, Kim and Pillay (\cite{HKP00}, Proposition 4.6) prove that if all minimal types are one-based, a theory must be ``modular," equivalently one-based. Kim's \textit{pseudolinearity conjecture} (\cite{Kim10}, after Buechler's theorem \cite{Bu91}) posits that every minimal type is either linear, or not even $k$-linear for any $k$. Tomašić and Wagner (\cite{TW03}, using Buechler's group configuration theorem technique \cite{BTW02} and the group configuration theorem for countably categorical simple theories from  \cite{Tomasic01}, \cite{BenYaacov2004}) prove the conclusion of the pseudolinearity conjecture for finite-rank supersimple theories that are countably categorical.

Assume the conclusion of the pseudolinearity conjecture, and let $\{a_i, b_i\}_{i < \omega}$ be a $C$-indiscernible sequence exhibiting the instability of $x \nind_{C} y$ between types of rank $n$, so $a_{i}, b_{i}$ have $\mathrm{SU}$-rank $n$ over $C$ and $a_{i} \nind_{C} b_{j}$ exactly when $i > j$. We know that the $\mathcal{L}$-theory is not one-based in $\mathcal{L}(C)$: supersimple theories eliminate hyperimaginaries (\cite{BuechlerPillayWagner2001}), and Kim proves the conclusion of the stable forking conjecture for one-based theories eliminating hyperimaginaries (Theorem 4.3 of \cite{Kim01}). So by the previous paragraph, there is some $C' \supset C$ with some minimal type $p(x) \in S(C')$ that is not $k$-linear for any $k$. We may choose $C' \ind_{C} \{a_i, b_i\}_{i < \omega}$ with $\{a_i, b_i\}_{i < \omega}$ $C'$-indiscernible, so it remains the case that $a_{i}, b_{i}$ have $\mathrm{SU}$-rank $n$ over $C'$ and $a_{i} \nind_{C'} b_{j}$ exactly when $i > j$; thus it does no harm to assume that there is some complete minimal type $p(x)$, not $k$-linear for any $k$, over $C$ itself.

We apply the indiscernible sequence interpretation of $k$-linearity in \cite{LongKoponenConjecturePreprint}: that $p(x)$ is not $k$-linear for any $k$ implies that for all $k$ there is $A \supset C$, $q(y) \in S(A)$, $\mathrm{SU}(q(y)|_{C})= 2$ with $\mathrm{SU}(\mathrm{Cb}(q)/ C) \geq k$. Let $\{c_{i}\}_{i < \omega}$ by a Morley sequence over $A$ consisting of realizations of $q(y)$; then $\{c_{i}\}_{i < \omega }$ will not be a Morley sequence over $C$, because otherwise $A \ind_{C} c_{0}$ by Kim's lemma, so by symmetry $q(y) = \mathrm{tp}(c_{0}/A)$ would not fork over $C$, contradicting $\mathrm{SU}(\mathrm{Cb}(q)/ C) \geq k > 0$. For $k > 2$, $\mathrm{SU}(\mathrm{Cb}(q)/ C) \geq k$ will imply that $\mathrm{SU}(q) = 1$ ($q$ forks over $C$ with $\mathrm{SU}(q(y)|_{C})= 2$, so $\mathrm{SU}(q) \leq 1$; $\mathrm{Cb}(q)$ is not contained in $\mathrm{acl}(aC)$ for $a \models q(y)$, so $\mathrm{SU}(q) > 0$.) So $\mathrm{SU}(q) - \mathrm{SU}(q|_{C}) = 1$ with $\{c_{i}\}_{i < \omega }$ a Morley sequence over $A$ consisting of realizations of $q(y)$ and $\mathrm{SU}(\mathrm{Cb}(q)/ C) \geq k$. Fact 3.2.3.2 of \cite{LongKoponenConjecturePreprint} says exactly that under these hypotheses, any $k$ terms of $\{c_{i}\}_{i < \omega }$ are independent (over $C$). So we can conclude that, for arbitrarily large $k$, there is an indiscernible sequence over $C$, whose terms have $\mathrm{SU}$-rank $2$ over $C$, that is not a Morley sequence over $C$, but for which any $k$ terms are independent. (We could have just quoted Corollary 3.2.9 of \cite{LongKoponenConjecturePreprint} for this, but we spelled out some of the intermediate arguments to make the construction more explicit.) Thus, for arbitrarily large $k$, there is a $C$-indiscernible sequence $\{c_{i}\}_{i < \omega}$ with $\mathrm{SU}(c_{i}/ C) = 2$ whose limit type over $C$ forks over $Cc_{0}\ldots c_{k-2}$. (If $\{c_{i}\}_{i < \omega}$ is not a Morley sequence but any $k$ terms are independent, then the limit type over $C$ must fork over $C$. But the restriction of the limit type to  $c_{0}\ldots c_{k-2}$ does not fork over $C$ because any $k$ terms are independent. So by transitivity, the limit type over $C$ forks over $Cc_{0}\ldots c_{k-2}$.)

We may choose $\{c_{i}\}_{i < \omega}$ as above such that $\{c_{i}\}_{i < \omega} \ind_{C} \{a_i, b_i\}_{i < \omega}$. We may additionally choose $\{c_{i}\}_{i < \omega}$ such that, for $\tilde{b}_{i} : = b_{i}c_{i}$, $\{a_i, \tilde{b}_i\}_{i < \omega}$ remains indiscernible over $C$. Then $\mathrm{SU}(\tilde{b}_{i}/C) = n+ 2 $, and $\{a_i, \tilde{b}_i\}_{i < \omega}$ is a $C$-indiscernible sequence exhibiting instability of the forking relation $x \nind_{C} \tilde{y}$ between types of rank at most $n + 2$: $a_{i} \nind_{C} \tilde{b}_{j}$ exactly when $i > j$. Moreover, the limit type of $\{ \tilde{b}_i\}_{i < \omega}$ over $C$ forks over $C\tilde{b}_{0} \ldots \tilde{b}_{k-2}$, because the limit type of $\{c_{i}\}_{i < \omega}$ over $C$ forks over $Cc_{0}\ldots c_{k-2}$. Since $k$ was arbitrarily large, we see that under our assumptions, Peretz's theorem Fact \ref{Peretz theorem} has no universal analogue in higher ranks, barring the vacuous case where stability of the forking relation over a base is just true. More precisely:

\begin{observation}\label{failure of universal Peretz theorem}
Suppose that, in a finite-rank supersimple theory $T$ satisfying the conclusion of the pseudolinearity conjecture, stability of the forking relation over a base fails. Then there is $n < \omega$ such that, for some $C$ and arbitrarily large $k < \omega$, there is a $C$-indiscernible sequence $\{a_i, b_i\}_{i < \omega}$ exhibiting the instability of $x \nind_{C} y$ between types of rank $n$ (so $a_{i}, b_{i}$ have $\mathrm{SU}$-rank $n$ over $C$ and $a_{i} \nind_{C} b_{j}$ exactly when $i > j$), with the limit type of $\{b_{i}\}_{i < \omega}$ over $C$ forking over the initial segment $Cb_{0}\ldots b_{k-1}$.

\end{observation}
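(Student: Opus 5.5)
The plan is to assemble the construction carried out in the paragraphs just before the statement, checking the bookkeeping in three places. The first is that the base $C$ and the rank $n'$ must be fixed \emph{before} $k$ varies. The second is the indexing of the initial segment. The third is that both coordinates should have rank exactly $n'$, not just at most $n'$.

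\textbf{Fixing $C$ and $n$.} Since stability of the forking relation over a base fails, I fix some $n_0$, a base $C_0$, and a $C_0$-indiscernible $\{a_i,b_i\}_{i<\omega}$ with $a_i\nind_{C_0}b_j$ iff $i>j$ and $\mathrm{SU}(a_i/C_0),\mathrm{SU}(b_i/C_0)\le n_0$. By Kim's theorem (Theorem 4.3 of \cite{Kim01}) together with elimination of hyperimaginaries (\cite{BuechlerPillayWagner2001}), $T$ is not one-based over $C_0$. Then \cite{HKP00} gives a minimal type that is not one-based, hence not linear. By the pseudolinearity hypothesis, this type is not $k$-linear for any $k$. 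It lives over some $C'\supseteq C_0$, and I choose $C'\ind_{C_0}\{a_i,b_i\}_{i<\omega}$ with the sequence $C'$-indiscernible. I set $C:=C'$. Nonforking preserves both the order-property pattern and the ranks, and this $C$ is now fixed once and for all.

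\textbf{The sequences $c^{(k)}$ for each $k$.} For every $k$, the indiscernible-sequence interpretation of non-$k$-linearity from \cite{LongKoponenConjecturePreprint} (Fact 3.2.3.2, or Corollary 3.2.9 directly) yields a $C$-indiscernible $\{c_i\}_{i<\omega}$ with $\mathrm{SU}(c_i/C)=2$. This sequence is not Morley over $C$, yet any $k+1$ of its terms are independent over $C$. By transitivity, exactly as in the text, its limit type over $C$ forks over $Cc_0\dots c_{k-1}$. This is where I take $k+1$ in place of $k$, to match the segment $b_0\dots b_{k-1}$ in the statement.

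\textbf{Padding both coordinates.} For rank bookkeeping I also take an independent copy $\{c'_i\}$ of the same kind, or any rank-$2$ realizations, and append it to the $a_i$. I choose $\{c_i,c'_i\}\ind_C\{a_i,b_i\}$ and, by Ramsey and compactness, arrange that $\{a_ic'_i,\,b_ic_i\}_{i<\omega}$ is $C$-indiscernible. I then need four facts.
\begin{itemize}
\item[(i)] For $i>j$, $a_i\nind_C b_j$ gives $a_ic'_i\nind_C b_jc_j$ by monotonicity.
\item[(ii)] For $i\le j$, $a_ic'_i\ind_C b_jc_j$. This follows from $a_i\ind_C b_j$, the independence of the padding from the original sequence, and the independence of $c'$ from $c$, using transitivity and symmetry.
\item[(iii)] Ranks add over independent pieces, so $a_ic'_i$ and $b_ic_i$ each have rank $\le n_0+2$. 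Setting $n:=n_0+2$, with the paper's harmless $\le$ convention, these are types of rank $n$.
\item[(iv)] If $\lim_C$ of $\{b_ic_i\}$ did not fork over $Cb_0c_0\dots b_{k-1}c_{k-1}$, then, because $\{c_i\}\ind_C\{b_i\}$, its restriction to the $c$-coordinates would give $\lim_C\{c_i\}$ nonforking over $Cc_0\dots c_{k-1}$, which is a contradiction. Here I use indiscernibility of the $c$'s over $C\{b_i\}$ and a transitivity argument.
\end{itemize}
Since $n$ and $C$ do not depend on $k$, this gives the statement.

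\textbf{Main obstacle.} I expect step (iv) to be the delicate one: transferring forking of the limit type from the $c$-sequence to the combined sequence. The plan there is to extend to $\{b_ic_i\}_{i\le\omega}$ with $\{c_i\}_{i\le\omega}\ind_C\{b_i\}_{i\le\omega}$, which is possible after re-choosing by the independence of the two sequences. Then, if $b_\omega c_\omega\ind_{Cb_{<k}c_{<k}}\{b_ic_i\}_{i<\omega}$, I would use $\{c_i\}_{i\le\omega}\ind_{C}\{b_i\}_{i\le \omega}$ to drop the $b$'s and obtain $c_\omega\ind_{Cc_{<k}}\{c_i\}_{i<\omega}$, contradicting the choice of $\{c_i\}$.
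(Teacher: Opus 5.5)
Your proposal is correct and follows essentially the same route as the paper. You fix $C$ once via Kim's one-based theorem, Hart--Kim--Pillay and pseudolinearity. Then, for each $k$, you pad the $b_i$ with an independent rank-$2$ indiscernible sequence whose limit type forks over a length-$k$ initial segment; your step (iv) just spells out the transitivity argument the paper leaves implicit. The only difference is that you also pad the $a_i$ with $c'_i$, which is harmless but unnecessary: as you concede in (iii), it does not give exact rank, and you fall back on the same $\le$ convention the paper uses when it pads only the $b_i$.
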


The reason why this observation poses no issue for the case $n =3$ is that, according to the argument for the observation, if stability of the forking relation fails over a base between types of rank $n$, Peretz's theorem must have no universal analogue in the rank $n + 2$.  So hypothetically, if we were to attempt to apply the argument to obtain a failure of Peretz's theorem in his original rank-$3$ case, or even the rank-$4$ case, we would need to start with an instance of instability of the forking relation over a base between types of rank $1$ or rank $2$. This assumption would be vacuous, as by Brower's theorem (Theorem 2.3.4 of \cite{Brower2012}), the stable forking conjecture over a base is true in rank $2$. But by the same token, if stability of the forking relation over a base fails in the lowest-rank case where it is open, between types of rank $3$, Peretz's theorem will have no universal analogue between a type of rank $3$ and a type of rank $5$.

In Question \ref{term shrinking question} below, in Section \ref{canonical base section}, we will propose a way in which Peretz's universal result in higher ranks might be recovered, just by shrinking the terms $a_{i}, b_{i}$ within an instance $\{a_i, b_i\}_{i < \omega}$ of $\nind_{C}$. The argument that Peretz's universal result fails in higher ranks suggests this possibility: suppose $\{a_i, b_i\}_{i < \omega}$ exhibits instability of $\nind_{C}$, and the limit type of $\{ b_i\}_{i < \omega}$ does not fork over the initial segment $C b_{0} \ldots b_{N-1}$. Then if we add the terms of $\{c_{i}\}_{i <\omega }$ to get $\{a_i, \tilde{b}_i\}_{i < \omega}$ above, this construction does not preclude shrinking the terms $\tilde{b}_i$ back to $b_{i}$ to recover the sequence $\{a_i, b_i\}_{i < \omega}$ satisfying the segment bound of size $N$. However, it remains open, and not readily apparent from the proof of Peretz's theorem (Theorem 3, \cite{Per06}), whether we can actually recover Peretz's universal result in higher ranks in this way.

The main lemma of this section will say that, if stability of the forking relation fails over a base between types of rank $n$, then there will be \textit{some} base $C$ and rank-$n$ instance of instability for $\nind_{C}$ that satisfies Peretz's segment-bounding conclusion. This is in contrast to our observation that Peretz's segment-bounding conclusion, when the forking-stability hypothesis fails under the conclusion of the pseudolinearity conjecture, does not hold for \textit{every} base $C$ and rank-$n$ instance of instability for $\nind_{C}$.

\begin{lemma}\label{existential Peretz theorem}

Suppose that it is not the case that the forking relation is stable over a base between types of rank $n$. Then there \emph{exists} a finite set $C$ and a $C$-indiscernible sequence $\{a_i, b_i\}_{i < \omega}$ exhibiting the instability of $x \nind_{C} y$ between types of rank $n$ (so $a_{i}, b_{i}$ have $\mathrm{SU}$-rank (at most) $n$ over $C$, and $a_{i} \nind_{C} b_{j}$ exactly when $i > j$) such that the limit type of $\{b_i\}_{i < \omega}$ over $C$ does not fork over $Cb_{0} \ldots b_{n-2}$, so $\{b_{i}\}_{i \geq n-1}$ is a Morley sequence over $Cb_{0} \ldots b_{n-2}$.

\end{lemma}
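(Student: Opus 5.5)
We will argue by an extremal choice followed by a base change that strictly improves the extremal quantity. Among all finite sets $C$ and $C$-indiscernible sequences $\{a_i,b_i\}_{i<\omega}$ exhibiting instability of $x\nind_C y$ between types of rank at most $n$, fix one minimizing the pair $(\mathrm{SU}(b_0/C), \mathrm{SU}(\lim_C(I^+/I)))$ lexicographically, or more simply minimizing $\mathrm{SU}(\lim_C(I^+/I))$ with $I=\{b_i\}_{i<\omega}$. Such instances exist by Fact \ref{reduction to sets of realizations of minimal types}, and $C$ can be kept finite by supersimplicity, as at the end of that proof. The key observation is a rank count. Write $r_k=\mathrm{SU}(b_k/Cb_0\ldots b_{k-1})$. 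By indiscernibility $r_k$ is non-increasing in $k$, and $r_0\le n$. The sequence $r_k$ is eventually constant, equal to $\mathrm{SU}$ of the limit type, which is positive since the $b_i$ are not constant (otherwise $a_i\nind_C b_j$ would not depend on the order of $i,j$). If $r_0>r_1>\cdots>r_{n-1}$ were strictly decreasing, then $r_{n-1}\le 1$, hence $r_{n-1}=1$. Since the limit rank is positive, $r_k=1$ for all $k\ge n-1$, so the limit type does not fork over $Cb_0\ldots b_{n-2}$, which is the conclusion. It therefore suffices to show that in a minimal instance there is no $k\le n-2$ with $r_k=r_{k+1}$.

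Suppose instead that $r_k=r_{k+1}$ for some $k\le n-2$. We will move to the base $C'=Cb_0\ldots b_{k}$, or, to preserve the order property, to a base built from an \emph{independent} copy of such terms. The naive move of naming the first $k+1$ terms and shifting the sequence keeps indiscernibility over $C'$. However, it risks destroying $a_i\nind_{C'}b_j$ for $i>j$, and it risks creating new dependence for $i<j$. The plan is to use the independence theorem, as Peretz does, to amalgamate. Take $d\equiv_C b_0\ldots b_k$ realized so that the new sequence $\{a_i',b_i'\}$ (with $b_i'$ playing the role of later terms) satisfies $b_i'\ind_{Cd}$-type conditions copied from the equality $r_k=r_{k+1}$. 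This equality says that $b_{k+1}\ind_{Cb_0\ldots b_{k-1}} b_k$, so $b_k$ and $b_{k+1}$ look alike over the shorter segment and can be amalgamated. With the independence theorem over $Cd$ (Lascar strong types coincide with strong types here via elimination of hyperimaginaries), we would arrange $a_i'\nind_{Cd} b_j'$ exactly when $i>j$, and we would ensure that the new limit type over $Cd$ has rank equal to the old limit rank minus something, or that $\mathrm{SU}(b_0'/Cd)<\mathrm{SU}(b_0/C)$ while instability is preserved. Either outcome contradicts minimality, and the ranks stay at most $n$ since they only drop.

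The main obstacle is this last step: verifying that after changing the base, forking between $a_i'$ and $b_j'$ for $i>j$ survives, while independence for $i\le j$ is preserved. Forking over a larger base can disappear, and independence can fail once we add $d$. I would first try choosing $d$ so that $d\ind_C a_i$ for the relevant indices. This makes $a_i\ind_{Cd} b_j$ follow from $a_i\ind_C b_jd$, obtained by amalgamating the types $\mathrm{tp}(a_i b_j/C)$ and $\mathrm{tp}(b_j d/C)$ via the independence theorem. For $i>j$, the dependence should instead be witnessed through transitivity and the rank equality $r_k=r_{k+1}$, which forces the part of $b_j$ independent from $d$ to still carry the forking with $a_i$. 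If this fails, the fallback is to strengthen the extremal quantity to the whole vector $(r_0,r_1,\ldots)$ ordered lexicographically, so that any local improvement at position $k$ suffices.
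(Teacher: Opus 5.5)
Your proposal has a genuine gap, and it starts at the reduction. You reduce the lemma to showing that an extremal instance has $r_0>r_1>\cdots>r_{n-1}$, i.e.\ no equality $r_k=r_{k+1}$ with $k\le n-2$. The lemma needs much less: only that $r_{n-1}$ equals the limit rank. Early equalities are harmless.

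Worse, your target is false exactly where the lemma has content. Take $n=3$. By Brower's theorem, every instance in rank at most $3$ has $\mathrm{SU}(a_i/C)=\mathrm{SU}(b_i/C)=3$. Peretz's theorem (Fact \ref{Peretz theorem}) then says every such instance is $3$-$3$-$1$: $b_1\ind_C b_0$, so $r_0=r_1=3$ and $r_2=1$. So whenever the hypothesis holds for $n=3$, every instance, extremal or not, has the equality you want to exclude, while still satisfying the lemma's conclusion. Proving your intermediate claim would therefore settle the open rank-$3$ case. Counting strict drops in the ranks of the $b_i$ is not the mechanism that bounds the segment.

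The improvement step is also not yet an argument. You leave unspecified the choice of $d$, the amalgamation, and the promised drop in the limit rank, as you acknowledge. The suggested independence $a_i\ind_C b_jd$ cannot hold for $i>j$, since $a_i\nind_C b_j$. Minimizing the limit rank is also the wrong direction: amalgamating with the independence theorem along a Morley sequence over a new base $D$ yields a sequence whose limit type is a nonforking extension of a type over $D$, so it \emph{raises} the limit rank.

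The missing idea is to bound the segment using $\mathrm{SU}(a/C)\le n$ rather than the rank of the $b_i$. Work with the two-sided form: $a$ and a $C$-indiscernible $\{b_i\}_{i<\omega+\omega}$, each half indiscernible over $Ca$ together with the other half, with $a\nind_C b_0$ and $a\ind_C b_\omega$. First minimize $\mathrm{SU}(a/C)$, and subject to that \emph{maximize} the limit rank $m$.

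If the limit type forks over $Cb_0\ldots b_{n-2}$, then some $0<k\le n-1$ has $a\ind_{Cb_0\ldots b_{k-1}}b_k$. Otherwise $a\in\mathrm{acl}(Cb_0\ldots b_{n-1})$, which is incompatible with $a\nind_C b_0$ and $a\ind_C b_\omega$. There are then two cases.
\begin{itemize}
\item If $a\nind_{Cb_0\ldots b_{k-1}}b_\omega$, rebase at $Cb_0\ldots b_{k-1}$ and reverse the sequence. Since $a\nind_C b_0$, this gives $\mathrm{SU}(a/Cb_0\ldots b_{k-1})<\mathrm{SU}(a/C)$, contradicting minimality.
\item If $a\ind_{Cb_0\ldots b_{k-1}}b_\omega$, then $b_k\equiv b_\omega$ over $D=\mathrm{acl}^{\mathrm{eq}}(Cb_0\ldots b_{k-1})$. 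The independence theorem realizes $\mathrm{tp}(ab_k/D)$ along the first half and $\mathrm{tp}(ab_\omega/D)$ along the second half of a Morley sequence in $\mathrm{tp}(b_k/D)$. Its limit rank is $\mathrm{SU}(b_k/D)\ge\mathrm{SU}(b_\omega/Cb_0\ldots b_{n-2})>m$, contradicting maximality.
\end{itemize}
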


Assuming we have some instance of instability of the forking relation over a base between types of rank $n$ to start with, our proof will \textit{not} suggest that the relationship between $C$,  $\{a_i, b_i\}_{i < \omega}$ as in the lemma, and the original instance of instability of the forking relation over a base, will just involve adding and dropping terms. (Question \ref{term shrinking question} will propose this possibility, asking whether the relationship between the two sequences just involves shrinking terms.) Instead, the proof will suggest that finding the sequence in the lemma will require materially altering another sequence via applications of the independence theorem (Case 2 in the proof). Our arguments will have a similar flavor to some of Peretz's arguments for Fact \ref{Peretz theorem}, Theorem 3 in \cite{Per06}. As noted earlier, there is precedent from Peretz's results in \cite{Per06} for applying the independence theorem to types of terms of a sequence exhibiting instability of the forking relation over a base. We note in particular the resemblance of point (1) of the proof of Claim 3.2 of \cite{Per06} to Case 1 below, and the larger argument in points (0)-(3) there to case $2$.

We prove Lemma \ref{existential Peretz theorem}:

\begin{proof}
  We are assuming that there is some base $C$ and a $C$-indiscernible sequence $\{a_i, b_i\}_{i < \omega}$ exhibiting the instability of $x \nind_{C} y$ between types of rank $n$: $a_{i}, b_{i}$ have $\mathrm{SU}$-rank (at most) $n$ over $C$, and $a_{i} \nind_{C} b_{j}$ exactly when $i > j$. (We may assume $C$ is finite, as in the last paragraph of the proof of Fact \ref{reduction to sets of realizations of minimal types}.) For a $C$-indiscernible sequence $\{b_{i}\}_{i < \omega}$ with $\mathrm{SU}(b_{i}/ C) \leq n$, the following are equivalent:

  \begin{itemize}
      \item The sequence $\{b_{i}\}_{i < \omega}$ admits a sequence $\{a_{i}\}_{i < \omega}$ with $\{a_i, b_i\}_{i < \omega}$ exhibiting the instability of $x \nind_{C} y$ between types of rank $n$.

      \item There is $a$ with $\mathrm{SU}(a/C) \leq n$, and a $C$-indiscernible sequence $\{ b_{i}\}_{i < \omega+\omega}$ extending $\{ b_{i}\}_{i < \omega}$, such that $\{ b_{i}\}_{i < \omega}$ is $Ca \{b_{i}\}_{\omega \leq i < \omega+\omega }$-indiscernible, $\{ b_{i}\}_{\omega \leq i < \omega+\omega}$ is $Ca \{b_{i}\}_{i < \omega }$-indiscernible, $a \nind_{C} b_{0}$, and $a \ind_{C} b_{\omega}$.
  \end{itemize}
  
   (The proof of Theorem 4.1 of \cite{PW13} observes the direction from $\{a_i, b_i\}_{i < \omega}$ to $a$, $\{b_{i}\}_{i < \omega + \omega}$.)
  
  Among all $C$, $\{b_{i}\}_{ i < \omega+\omega }$, $a$ satisfying these above conditions ($a$, $b_{i}$ have $\mathrm{SU}$-rank at most $n$ over $C$, $\{ b_{i}\}_{i < \omega+\omega}$ is $C$-indiscernible, $\{ b_{i}\}_{i < \omega}$ is $Ca \{b_{i}\}_{\omega \leq i < \omega+\omega }$-indiscernible, $\{ b_{i}\}_{\omega \leq i < \omega+\omega}$ is $Ca \{b_{i}\}_{i < \omega }$-indiscernible, $a \nind_{C} b_{0}$, $a \ind_{C} b_{\omega}$), let us choose $C$, $\{b_{i}\}_{\omega \leq i < \omega+\omega }$, $a$ to make the following minimizations and maximizations.

  (1) The rank $\mathrm{SU}(a/C)$ is minimized.

  (2) Subject to (1), for $I = \{b_{i}\}_{ i < \omega+\omega } $ and $\mathrm{lim}_{C}(I^{+}/I)$ its limit type over $C$ (i.e., $\mathrm{tp}(b_{\omega+\omega}/C \{b_{i}\}_{ i < \omega+\omega })$ for $\{b_{i}\}_{i \leq \omega+\omega }$ a $C$-indiscernible sequence extending $\{b_{i}\}_{ i < \omega+\omega }$), $\mathrm{SU}(\mathrm{lim}_{C}(I^{+}/I))$ is maximized. (Note that in any case,  $\mathrm{SU}(\mathrm{lim}_{C}(I^{+}/I)) \leq \mathrm{SU}(b_{\omega+\omega}/ C) \leq n$, so it makes sense to say there is a maximum.)

  By the equivalence between the two bullet points, it suffices to show that the limit type of $\{b_i\}_{i < \omega + \omega}$ does not fork over $Cb_{0} \ldots b_{n-2}$ for this choice of $C$, $\{b_{i}\}_{ i < \omega+\omega }$, $a$. Since this limit type is finitely satisfiable in $C\{b_i\}_{i < \omega}$, it suffices to show that the limit type of $\{b_i\}_{i < \omega}$ does not fork over $Cb_{0} \ldots b_{n-2}$, equivalently $b_{\omega} \ind_{Cb_{0} \ldots b_{n-2}} \{b_i\}_{i < \omega}$. Let $m$ be the $\mathrm{SU}$-rank of the limit type of $\{b_i\}_{i < \omega}$ over $C$, $\mathrm{tp}(b_{\omega}/C \{b_{i}\}_{ i < \omega})$. Suppose for a contradiction that  $b_{\omega} \nind_{Cb_{0} \ldots b_{n-2}} \{b_i\}_{i < \omega}$; then $\mathrm{SU}(\mathrm{tp}(b_{\omega}/C b_{0}\ldots b_{n-2})) > \mathrm{SU}(\mathrm{tp}(b_{\omega}/C \{b_{i}\}_{ i < \omega})) = m$.

 We know that it is not the case that, for each of the $n$ values $i$ with $0 \leq i \leq n-1$, $a \nind_{Cb_{0} \ldots b_{i-1}} b_{i}$ (where the expression $b_{0} \ldots b_{i}$ is empty for $i = 0$.) Otherwise, $\mathrm{SU}(a/b_{0} \ldots b_{n-1})= 0$, so $a \in \mathrm{acl}_{C}(b_{0}, \ldots b_{n-1})$. By, say, $a \nind_{C} b_{0}$ and Kim's lemma, $\{b_{i}\}_{i < \omega}$ cannot be a Morley sequence. So $b_{\omega} \nind_{C} \{b_{i}\}_{i < \omega}$, so $b_{\omega} \nind_{C} \mathrm{acl}_{C}(\{b_{i}\}_{i < \omega})$, so $b_{\omega} \nind_{C} a$ and $a \nind_{C} b_{\omega} $ by symmetry, a contradiction.

 So for some $k$ with $0 \leq k \leq n-1$, $a \ind_{Cb_{0} \ldots b_{k-1}} b_{k}$. Moreover, $a \nind_{C} b_{0}$ implies $k > 0$. There are two cases, $a \nind_{Cb_{0} \ldots b_{k-1}} b_{\omega}$ and $a \ind_{Cb_{0} \ldots b_{k-1}} b_{\omega}$.

 \textit{Case 1: $a \nind_{Cb_{0} \ldots b_{k-1}} b_{\omega}$.} Then for the base $D := Cb_{0} \ldots b_{k-1}$, $a \nind_{C} b_{0}$, $k > 0$ implies $\mathrm{SU}(a/D) < \mathrm{SU}(a/C)$. Then $D$, $\{b_{i}\}_{k\leq i \leq i < \omega+\omega }$, $a$ satisfy the following: $a$, $b_{i}$ have $\mathrm{SU}$-rank at most $n$ over $D$, $\{ b_{i}\}_{k \leq i < \omega+\omega}$ is $D$-indiscernible $\{ b_{i}\}_{k \leq i < \omega}$ is $Da \{b_{i}\}_{\omega \leq i < \omega+\omega }$-indiscernible, $\{ b_{i}\}_{\omega \leq i < \omega+\omega}$ is $Da \{b_{i}\}_{k \leq i < \omega }$-indiscernible, $a \ind_{D} b_{k}$, $a \nind_{D} b_{\omega}$. Flipping, we get a contradiction to the minimality in (1).

 \textit{Case 2: $a \ind_{Cb_{0} \ldots b_{k-1}} b_{\omega}$.} Similarly to before, define $D := \mathrm{acl}^{\mathrm{eq}}(Cb_{0} \ldots b_{k-1})$ (though we will not end up using $D$ as the new base for instability of $\nind_{D}$, as in Case 1). The purpose of $\mathrm{acl}^{\mathrm{eq}} = \mathrm{bdd}$ (\cite{BuechlerPillayWagner2001}) will be to apply the independence theorem over $D$. Then $b_{k} \equiv_{D} b_{\omega}$ (because $b_{k}$, $b_{\omega}$ belong to an indiscernible sequence over $Cb_{0} \ldots b_{k-1}$, so over $D=\mathrm{acl}^{\mathrm{eq}}(Cb_{0} \ldots b_{k-1})$), $a \ind_{D} b_{k}$, and $a \ind_{D} b_{\omega}$. Moreover, $\mathrm{SU}(b_{k}/D) = \mathrm{SU}(b_{\omega}/D)= \mathrm{SU}(\mathrm{tp}(b_{\omega}/C b_{0}\ldots b_{k-1})) \geq \mathrm{SU}(\mathrm{tp}(b_{\omega}/C b_{0}\ldots b_{n-2})) > m$. For $p(x) := \mathrm{tp}(b_{k}/D)=\mathrm{tp}(b_{\omega}/D)$, let $\{b'_{i}\}_{i < \omega}$ be a Morley sequence over $D$ consisting of realizations of $p(x)$. The $\mathrm{SU}$-rank of the limit type of $\{b'_{i}\}_{i < \omega+\omega}$ over $C$ is equal to $\mathrm{SU}(p(x)) > m$, by the standard argument:  the limit type of $\{b'_{i}\}_{i < \omega+\omega}$ over $D$ does not fork over $D$ (it is a Morley sequence over $D$), nor over $C\{b'_{i}\}_{i < \omega+\omega}$ (by finite satisfiability in $C\{b'_{i}\}_{i < \omega+\omega}$). So if we can find $a' \equiv_{C} a$ such that $\{ b'_{i}\}_{i < \omega}$ is $Ca' \{b'_{i}\}_{\omega \leq i < \omega+\omega }$-indiscernible, $\{ b'_{i}\}_{\omega \leq i < \omega+\omega}$ is $Ca' \{b'_{i}\}_{i < \omega }$-indiscernible, $a' \nind_{C} b'_{0}$, $a' \ind_{C} b'_{\omega}$, we will have reached a contradiction to the maximality in (2). By $a \ind_{D} b_{k}$, $a \ind_{D} b_{\omega}$, and the fact $\{ b'_{i}\}_{i < \omega + \omega}$ is a Morley sequence of $D$, we may repeatedly apply the independence theorem over $D$, obtaining $a \ind_{D} \{ b'_{i}\}_{i < \omega + \omega}$ such that $a'b'_{i} \equiv_{D} a b_{k} $ for $i < \omega$, $a'b'_{i} \equiv_{D} a b_{\omega} $ for $\omega \leq i < \omega+\omega$. Particularly, $a'b'_{i} \equiv_{C} a b_{k} $ for $i < \omega$, $a'b'_{i} \equiv_{C} a b_{\omega} $ for $\omega \leq i < \omega+\omega$. We may choose $a'$ subject to the last sentence such that additionally, $\{ b'_{i}\}_{i < \omega}$ is $Ca' \{b'_{i}\}_{\omega \leq i < \omega+\omega }$-indiscernible and $\{ b'_{i}\}_{\omega \leq i < \omega+\omega}$ is $Ca' \{b'_{i}\}_{i < \omega }$-indiscernible. (As in the proof of Lemma 1.2 of \cite{Che14}, we may first choose such $a'$ such that the first condition is satisfied. Then, given that we have such $a'$ such that the first indiscernibility condition is satisfied, we can choose such $a'$ such that both indiscernibility conditions are satisfied.) Then by $a'b'_{0} \equiv_{C} a b_{k} $ and $a \nind_{C} b_{k} $, $a' \nind_{C} b'_{0}$. Similarly, by $a'b'_{\omega} \equiv_{C} a b_{\omega} $ and $a \ind_{C} b_{\omega} $, $a' \ind_{C} b'_{\omega}$. This gives us the desired contradiction.

\end{proof}

Note that the proof of this lemma says slightly more: suppose $x\nind_{C'}y$ is unstable between types $p(x), q(y) \in S(C')$ of $\mathrm{SU}$-rank $n$, so there is a $C'$-indiscernible sequence $\{a'_i, b'_i\}_{i< \omega}$ with $a'_{i} \nind_{C'} b'_{j}$ exactly when $i > j$ and $a'_{i} \models p(x)$, $b'_{i} \models q(y)$. Then there is $C$, $\{a_i, b_i\}_{i< \omega}$ as in the statement of the lemma with $C \supset C'$, $a_{i} \models p(x)$, $b_{i} \models q(y)$. We will use this statement in the next section to give the proof of our main theorem, Theorem \ref{main theorem, restated}.

\section{A refined multi-experiment argument}\label{a refined multi-experiment argument}

We have seen that, if the forking relation is unstable in a finite-rank supersimple theory between types of rank $n$, then \textit{there is} an instance of this forking instability satisfying 
Peretz's nonforking condition over a bounded initial segment. This is notwithstanding the fact that, unlike in Peretz's result in rank $3$, not \textit{every} instance of instability in rank $n$ satisfies this segment-bounding condition. Earlier, we also reviewed the well-known fact that instability of the forking relation between types of rank $n$ is equivalent to some minimal type pregeometry embedding a pregeometry from a specific set $\mathcal{G}_{n}$ (Fact \ref{standard pregeometric interpretation, restated}). And our goal in proving our main theorem (Theorem \ref{main theorem, restated}) is to show that $\mathcal{G}_{n}$ can be taken to be finite. We will now see that we can obtain not just any pregeometry in this $\mathcal{G}_{n}$, but in fact one of only finitely many specific pregeometries in $\mathcal{G}_{n}$, from an instance of instability of $x\nind_{C}y$ satisfying Peretz's condition. So, because we know that instability of $x\nind_{C}y$ between types of rank $n$ implies the existence of an instance of instability of $x\nind_{C}y$ satisfying Peretz's condition (Lemma \ref{existential Peretz theorem}), we will always obtain one of these finitely many pregeometries in $\mathcal{G}_{n}$ under the assumption of instability of $x\nind_{C}y$ between types of rank $n$. This will complete the proof of our main theorem.

To show that Peretz's condition leads to finitely many pregeometries, we will extend a multi-experiment parameter identifiability argument based on \cite{Ovc22}, part of a larger research program initiated by Li, Meshkat, Ovchinnikov, Pillay, Pogudin and Scanlon within theoretical experimental biology (\cite{Li21}, \cite{Ovc21}, \cite{Li23}, \cite{Ovc22}, \cite{ovchinnikov2025identifiable}, \cite{meshkat2025algorithm}). The original multi-experiment argument involves indiscernible sequences whose terms are construed as undifferentiated entities. However, we will refine this argument to obtain additional information about indiscernible sequences whose terms are understood as sets of tuples, specifically tuples realizing some minimal partial type. The original argument, which first appears explicitly as Lemma 2.2 in \cite{LongKoponenConjecturePreprint}, is not explicitly published within the literature on multi-experiment parameter identifiability.\footnote{However, Proposition 7.27 from \cite{Ovc22} is closely related, also bounding the length of the initial segment $a_{0} \ldots a_{r-1}$ of a Morley sequence $\{a_{i}\}_{i < \omega}$ in a type $p(x) \in S(A)$ such that $\mathrm{Cb}(p)$ is contained within $\mathrm{acl}^{\mathrm{eq}}(a_{0}, \ldots, a_{n-1})$. This bound is given by the first $r$ such that $a_{r} \ind_{a_{0} \ldots a_{r-1}} A$, rather than by the rank of $\mathrm{Cb}(p)$. This appears relatively straightforward, because $\mathrm{tp}(a_{r}/Aa_{0}\ldots a_{r-1})$ is then a common nonforking extension of $p(x)|_{a_{0} \ldots a_{r-1}}$ and $p(x)$.   But the proposition, proven within the setting of differential fields, requires more work starting from the differential-equations background assumed in \cite{Ovc22}.} The most direct documented source for this argument is Corollary 3.2 of \cite{Ovc22}, which bounds the number of experiments required for parameter identifiability in terms of the number of parameters; the argument that we will refine will yield an abstract version of this corollary. One more informal source for the abstract argument itself is a set of discussions by Pillay and Scanlon communicated to the author in a personal communication of James Freitag, including a conference presentation of Pillay on his joint work with Ovchinnokov, Pogudin and Scanlon in \cite{Ovc22}.\footnote{This has led to the employment, including by the author, of the ``Pillay-Scanlon argument" as an informal nickname for the argument, though we believe the larger context of \cite{Li21}, \cite{Ovc21}, \cite{Li23}, \cite{Ovc22}, \cite{ovchinnikov2025identifiable}, \cite{meshkat2025algorithm} better explains the argument's full origins.}

The basis for the original multi-experiment argument is the following well-known fact (found in, say, Lemma 3.19 of Chapter 1 of \cite{P96}): for a type $p(x) \in S(A)$ and a Morley sequence $\{a_{i}\}_{i < \omega}$ of realizations of $p(x)$ over $A$, the canonical base $\mathrm{Cb}(p)$ of $p$ is contained within the algebraic closure $\mathrm{acl}^{\mathrm{eq}}(a_{0}, \ldots, a_{n-1})$ of some initial segment of the Morley sequence. (Here, we are using supersimplicity to get the canonical base contained in some initial segment $\mathrm{acl}^{\mathrm{eq}}(a_{0}, \ldots, a_{n-1})$, rather than just the full sequence $\mathrm{bdd}(\{a_{i}\}_{i < \omega})$ as in a general simple theory.) Speaking extremely roughly, the relevance to biologists of this fact comes from thinking of the canonical base $\mathrm{Cb}(p)$ as the parameter set of an experiment, and the Morley sequence $\{a_{i}\}_{i < \omega}$ as consisting of the results of a sequence of independent trials. That $\mathrm{Cb}(p) \subset \mathrm{acl}^{\mathrm{eq}}(a_{0}, \ldots, a_{n-1})$ then says that one can identify the parameters of an experiment from $n$ independent trials.

In practice, however, one may hope for some bound on the number of independent trials required for recovering the parameters of a given experiment. The arguments for Corollary 3.2 of \cite{Ovc22} supply such a bound. Specifically, we can view $\mathrm{SU}(\mathrm{Cb}(p(x)))$ as the number of independent parameters of an experiment. Then the arguments in \cite{Ovc22} show that the number of independent trials required to recover the parameters is bounded by the number of independent parameters. More formally, this multi-experiment argument shows:

\begin{fact}\label{Pillay-Scanlon argument}

Let $\{a_{i}\}_{i <\omega}$ be a Morley sequence over $A$ with $p(x) := \mathrm{tp}(a_{i}/A)$, and let $n = \mathrm{SU}(\mathrm{Cb}(p(x)))$. Then $\mathrm{Cb}(p) \subset \mathrm{acl}^{\mathrm{eq}}(a_{0}, \ldots a_{n-1})$, so $\{a_{i}\}_{n \leq i <\omega}$ is a Morley sequence over $a_{0}, \ldots a_{n-1}$.
    
\end{fact}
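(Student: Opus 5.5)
The plan is to track the rank of the canonical base relative to growing initial segments of the Morley sequence. Write $e := \mathrm{Cb}(p)$, so $e \subset \mathrm{acl}^{\mathrm{eq}}(A)$, $p$ does not fork over $e$, and $p$ is (up to bounded closure) the unique nonforking extension of $p|_{e}$. For each $r$, set $e_{r} := \mathrm{SU}(e / a_{0} \ldots a_{r-1})$, so $e_{0} = n$ and the sequence $e_{r}$ is non-increasing. The goal is to show that $e_{r}$ strictly decreases until it reaches $0$. Then $e_{n} = 0$, i.e.\ $e \subset \mathrm{acl}^{\mathrm{eq}}(a_{0}, \ldots, a_{n-1})$.

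\textbf{Key step: strict decrease.} Suppose $e_{r} = e_{r+1} > 0$. Then $e \ind_{a_{0}\ldots a_{r-1}} a_{r}$, equivalently $a_{r} \ind_{a_{0}\ldots a_{r-1}} e$. Since $\{a_{i}\}$ is indiscernible over $A$, hence over $e$, this same independence holds for every $a_{j}$ with $j \geq r$ in place of $a_{r}$. I will then argue that it propagates to the whole tail: $\{a_{i}\}_{i \geq r} \ind_{a_{0}\ldots a_{r-1}} e$. To see this, note that $\{a_{i}\}_{i\geq r}$ is a Morley sequence over $A a_{0}\ldots a_{r-1}$, hence also over $e a_{0}\ldots a_{r-1}$ (using $a_{i}\ind_{e}Aa_{<i}$ and monotonicity). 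Now compute with additivity of $\mathrm{SU}$-rank:
\[
\mathrm{SU}(a_{r}\ldots a_{r+m-1}/ e a_{<r}) = m\cdot\mathrm{SU}(p),
\]
whereas
\[
\mathrm{SU}(a_{r}\ldots a_{r+m-1}/a_{<r}) \leq m\cdot \mathrm{SU}(a_{r}/a_{<r}) = m\cdot \mathrm{SU}(p),
\]
where the final equality uses the assumed independence $a_{r}\ind_{a_{<r}}e$. The two ranks therefore agree, giving $a_{r}\ldots a_{r+m-1} \ind_{a_{<r}} e$ for all $m$, and hence $e_{s} = e_{r}$ for all $s \geq r$.

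\textbf{Conclusion.} By the well-known fact recalled in the paper (Lemma 3.19 of \cite{P96}, together with supersimplicity), $e \subset \mathrm{acl}^{\mathrm{eq}}(a_{0},\ldots,a_{N-1})$ for some $N$, so $e_{N} = 0$. This contradicts $e_{s} = e_{r} > 0$ for all $s \geq r$. Hence the ranks strictly decrease while positive, $e_{n} = 0$, and $\mathrm{Cb}(p)\subset \mathrm{acl}^{\mathrm{eq}}(a_{0},\ldots,a_{n-1})$.

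For the final clause: for $i \geq n$, $a_{i} \ind_{e} A a_{<i}$ together with $e \subset \mathrm{acl}^{\mathrm{eq}}(a_{<n})$ gives $a_{i} \ind_{a_{<n}} a_{n}\ldots a_{i-1}$. So $\{a_{i}\}_{i\geq n}$ is independent over $a_{0}\ldots a_{n-1}$, and it is indiscernible over that set because $\{a_{i}\}$ is indiscernible. It is therefore a Morley sequence over $a_{0}\ldots a_{n-1}$.

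\textbf{Main obstacle.} The delicate point is the rank computation in the key step. It requires $\mathrm{SU}(p) < \omega$, so that additivity applies with finite arithmetic, which holds in our finite-rank setting. It also requires $\mathrm{tp}(a_{r}/a_{<r})$ to have rank exactly $\mathrm{SU}(p)$. Both directions of the rank comparison need checking carefully with the Lascar inequalities, and if the direct computation fails I would instead argue via finite satisfiability of the limit type.
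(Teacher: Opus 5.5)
Your proof is correct, and its skeleton is the paper's: the rank $\mathrm{SU}(e/a_{0}\ldots a_{k-1})$ of $e=\mathrm{Cb}(p)$ must drop at every step until $e$ becomes algebraic, so $n$ steps suffice. Where you differ is in how you justify that key step.

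The paper's one-line claim is that $e\not\subset\mathrm{acl}^{\mathrm{eq}}(a_{<k})$ forces $e\nind_{a_{<k}}a_{k}$. This rests on a local canonical-base argument. If $a_{k}\ind_{a_{<k}}e$, then $\mathrm{tp}(a_{k}/ea_{<k})$ is a nonforking extension of $\mathrm{tp}(a_{k}/e)$ (since $a_{k}\ind_{e}Aa_{<k}$). It is also a nonforking extension of $\mathrm{tp}(a_{k}/a_{<k})$. So its canonical base, which is $e$, already lies in $\mathrm{acl}^{\mathrm{eq}}(a_{<k})$.

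You instead propagate the one-step independence up the whole tail, and then use the global fact $e\subset\mathrm{acl}^{\mathrm{eq}}(a_{<N})$ as a black box to reach a contradiction. This is valid; your rank count works as written. The lower bound comes from $\{a_{i}\}_{i\geq r}$ being Morley over $ea_{<r}$. The upper bound comes from subadditivity plus indiscernibility. Equality of finite ranks then gives nonforking.

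It is, however, more roundabout. The local argument needs neither the existence of $N$ (it reproves that fact with the explicit bound $N\leq n$) nor $\mathrm{SU}(p)<\omega$. Your additivity count does use finiteness of $\mathrm{SU}(p)$. That hypothesis is available in this paper, and in any case you could avoid it by propagating with forking calculus alone. For $s\geq r$, your assumption $a_{r}\ind_{a_{<r}}e$ and indiscernibility over $\mathrm{acl}^{\mathrm{eq}}(A)$ give $a_{s}\ind_{a_{<r}}e$. The Morley property over $e$ gives $a_{s}\ind_{ea_{<r}}a_{r}\ldots a_{s-1}$. Transitivity and base monotonicity then yield $a_{s}\ind_{a_{<s}}e$.

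One small aside: your opening remark that $p$ is (up to bounded closure) the unique nonforking extension of $p|_{e}$ is false in simple theories. In the random graph, for instance, $\mathrm{Cb}$ is trivial but nonforking extensions abound. Nothing in your argument uses this remark.
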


The main point of this argument is that, if $\mathrm{Cb}(p) \not\subset \mathrm{acl}^{\mathrm{eq}}(a_{0}, \ldots a_{k-1})$, $\mathrm{Cb}(p) \nind_{a_{0}, \ldots a_{k-1}} a_{k}$. So if this is the case for $0 \leq k \leq n$, $\mathrm{SU}(\mathrm{Cb}(p(x))) > n$, a contradiction.

That $\{a_{i}\}_{n \leq i <\omega}$ is a Morley sequence over $a_{0}, \ldots a_{n-1}$ tells us that, if we bound the rank of the canonical base of a type in which $\{a_{i}\}_{ i <\omega}$ is a Morley sequence (equivalently, of the limit type of $\{a_{i}\}_{ i <\omega}$), forking in $\{a_{i}\}_{ i <\omega}$ will be controlled by the forking in a bounded initial segment. However, in dealing with pregeometries, we will be considering sequences $\{a_{i}\}_{i <\omega}$ where the $a_{i}$ themselves consist of $n$ independent realizations $a^{0}_{i} \ldots a^{n-1}_{i}$ of an $\mathrm{SU}$-rank-$1$ partial type. We would like to get control over forking between the $a^{j}_{i}$, or in other words the pregeometry arising from these realizations of a minimal partial type, by bounding the rank of the canonical base of the limit type of $\{a_{i}\}_{i <\omega}$. For this, we will need a refinement of the multi-experiment argument. 

Before proving our refinement, we will first show that, given instability of $\nind_{C}$ between types of rank $n$, there is more than just a sequence $\{a_{i}, b_{i}\}_{i < \omega}$ exhibiting instability of $\nind_{C}$ between types of rank $n$ such that the canonical base of the limit type of $\{b_{i}\}_{i < \omega}$ is of bounded $\mathrm{SU}$-rank (as follows from the previous section's main lemma, Lemma \ref{existential Peretz theorem}). There is even a sequence $\{a_{i}, b_{i}\}_{i < \omega}$ exhibiting instability $\nind_{C}$ between types of rank $n$ such that the canonical base of the limit type of the sequence with the full terms $\{a_{i}, b_{i}\}_{i < \omega}$ is of bounded $\mathrm{SU}$-rank. (We will simultaneously apply the equidominance reduction from Fact \ref{reduction to sets of realizations of minimal types} to ensure that the terms are sets of realizations of a minimal partial type.) It is to this sequence that we will apply the refined multi-experiment argument.

\begin{proposition}\label{small canonical base}

Suppose that it is not the case that the forking relation is stable over a base between types of rank $n$. Then for some finite set $C$ there \textit{exists} a $C$-indiscernible sequence $\{a_i, b_i\}_{i < \omega}$ exhibiting the instability of $x \nind_{C} y$ (so $a_{i} \nind_{C} b_{j}$ exactly when $i > j$) such that the canonical base of the limit type of $\{a_i, b_i\}_{i < \omega}$ over $C$ has $\mathrm{SU}$-rank at most $n^{2}$ over $C$, and such that $a_{i}$, $b_{j}$ are each sets consisting of at most $n$ realizations of a minimal partial type over $C$.

\end{proposition}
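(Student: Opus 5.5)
Start from Lemma \ref{existential Peretz theorem} in the strengthened form noted after its proof. First apply Fact \ref{reduction to sets of realizations of minimal types} to get a finite $C'$ and a $C'$-indiscernible witness $\{a'_i,b'_i\}_{i<\omega}$. Its terms are sets of at most $n$ realizations of minimal types over $C'$. Let $p(x)=\mathrm{tp}(a'_0/C')$ and $q(y)=\mathrm{tp}(b'_0/C')$.

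The remark after the lemma then gives a finite $C\supseteq C'$ and a $C$-indiscernible witness $\{a_i,b_i\}_{i<\omega}$ with $a_i\models p$ and $b_i\models q$. In it, the limit type of $\{b_i\}$ over $C$ does not fork over $Cb_0\ldots b_{n-2}$. The terms remain sets of at most $n$ realizations of minimal types over $C$. This holds because $a_i\models p$, $b_i\models q$, and (as in the proof of Fact \ref{reduction to sets of realizations of minimal types}) we discard coordinates that have become algebraic over $C$. As in Fact \ref{standard pregeometric interpretation, restated}, the disjunction of the finitely many complete minimal types over $C$ involved is a minimal partial type.

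Next I bound the canonical base of the limit type of the pairs. Let $I=\{a_ib_i\}_{i<\omega}$, extended $C$-indiscernibly to $\{a_ib_i\}_{i\le\omega}$. The limit type is $\mathrm{tp}(a_\omega b_\omega/CI)$. It is finitely satisfiable in $CI$, and $\{a_ib_i\}_{i\ge \omega}$ continues as a Morley sequence over $CI$ in it. Its canonical base lies in $\mathrm{acl}^{\mathrm{eq}}(CI)$.

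I estimate $\mathrm{SU}(\mathrm{Cb}/C)$ in two parts, $\mathrm{tp}(b_\omega/CI)$ and $\mathrm{tp}(a_\omega/CIb_\omega)$. For the $b$-part, first use the lemma's nonforking over $Cb_0\ldots b_{n-2}$, then indiscernibility of $\{a_ib_i\}$, so that the $b$-part depends only on $Cb_0\ldots b_{n-2}$. Its canonical base is then inside $\mathrm{acl}^{\mathrm{eq}}(Cb_0\ldots b_{n-2})$, which has rank at most $n(n-1)$.

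The $a$-part needs more care. My plan is to move to a subsequence, or to replace $C$ by $C$ together with finitely many terms, as in Case 1 of the lemma, until $\mathrm{tp}(a_\omega/CIb_\omega)$ does not fork over a bounded segment. The total canonical base then sits in the algebraic closure of $C$ and at most $n$ pairs $a_ib_i$ for some indices, roughly $b_0,\ldots,b_{n-2}$ plus one $a$-term. This gives rank at most $n(n-1)+n=n^2$.

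One way to achieve the $a$-bound is symmetric: apply Lemma \ref{existential Peretz theorem} to the reversed sequence, with roles swapped, to $\{b_{-i},a_{-i}\}$. Doing this simultaneously with the $b$-side is awkward, though. So I would instead redo the minimization argument of that lemma with the combined quantity, i.e. maximize the rank of the limit type of the pairs. The same Case 1/Case 2 dichotomy, with the independence theorem over $\mathrm{acl}^{\mathrm{eq}}$ of a prefix, should show that the full limit type does not fork over $C$ with the first $n$ pairs, or fewer. With terms of rank at most $n$ over $C$, that yields the required bound of $n^2$.

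\textbf{The main obstacle} is this last step. One must make the Case 2 reconstruction via repeated independence theorem produce a Morley sequence of pairs rather than of $b$'s alone, while keeping the witnessing pattern $a_i\nind_C b_j$ iff $i>j$. Failing that, I would accept a bound from the $b$-sequence plus a rank-$n$ contribution from the $a_i$. That is justified by noting that $\mathrm{Cb}$ of the pair type is interalgebraic with $\mathrm{Cb}$ of the $b$-part together with a bounded piece over $C\{b_i\}$, and $n$ terms of rank at most $n$ fit within $n^2$.
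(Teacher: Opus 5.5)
Your opening matches the paper: Fact \ref{reduction to sets of realizations of minimal types}, then the strengthened form of Lemma \ref{existential Peretz theorem}, discarding coordinates that are algebraic over $C$. Your $b$-side estimate is also sound, once justified properly. $\mathrm{tp}(b_\omega/C\{a_ib_i\}_{i<\omega})$ is finitely satisfiable in $\{b_i\}_{i<\omega}$, so it does not fork over $C\{b_i\}_{i<\omega}$, and by transitivity it does not fork over $Cb_0\ldots b_{n-2}$.

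The gap is the $a$-side, which you flag yourself and do not resolve. Lemma \ref{existential Peretz theorem} only controls how each $a_i$ sits over the $b$-sequence. It says nothing about the joint type of distinct $a_i,a_j$, which may be correlated through parameters unrelated to the $b$'s. So nothing bounds $\mathrm{Cb}(a_\omega/CIb_\omega)$ by a single $a$-term, and your claim that the pair canonical base is interalgebraic with the $b$-canonical base plus a rank-$\le n$ piece is unsupported. In addition, that canonical base lives in $\mathrm{acl}^{\mathrm{eq}}(CIb_\omega)$ rather than over $CI$, so your two-part split does not by itself bound $\mathrm{Cb}(a_\omega b_\omega/CI)$. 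Your alternative, rerunning the minimization with the rank of the pair limit type, has no Case 2 argument, as you note.

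The missing idea is to discard the given $a_i$ and re-choose them so that they contribute nothing to the canonical base.
\begin{enumerate}
\item Take $C=\emptyset$ and let $B$ be the canonical base of the limit type of $\{b_i\}$. Then $B\subseteq\mathrm{acl}^{\mathrm{eq}}(b_0\ldots b_{n-2})$, so $\mathrm{SU}(B)\le n^2$. After extending to $\{b_i\}_{i\in\mathbb{Q}}$, the sequence is a Morley sequence over $B$.
\item Pass to the single-parameter form of the witness: an $a$ with $\{b_i\}_{i>0}$ indiscernible over $a\{b_i\}_{i\le 0}$ and vice versa, $a\nind b_0$, and $a\ind b_1$.
\item Finite satisfiability together with Morleyness over $B$ gives $a\ind_B\{b_i\}_{i\in\mathbb{Q}}$, as at the start of the proof of Theorem 4.1 of \cite{PW13}.
\item Choose new $a^*_i$ with $a^*_i\{b_j\}_{j\le i}\{b_j\}_{j>i}\equiv_B a\{b_j\}_{j\le 0}\{b_j\}_{j>0}$, mutually independent over $B\{b_j\}_{j\in\mathbb{Q}}$. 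Since that independence is expressed in the EM-type, you may also take $\{a^*_ib_i\}_{i<\omega}$ to be $B$-indiscernible.
\end{enumerate}
This sequence still witnesses the order property for $\nind$, and it is a Morley sequence over $B$. Hence the canonical base of its limit type lies in $\mathrm{acl}^{\mathrm{eq}}(B)$ and has rank at most $n^2$. The $a$-side contributes nothing, and no minimization over pairs is needed.
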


\begin{proof}
    By Fact \ref{reduction to sets of realizations of minimal types}, there is some $C'$, and a $C'$-indiscernible sequence $\{a'_i, b'_i\}_{i< \omega}$, with $a'_{i} \nind_{C'} b'_{j}$ exactly when $i > j$ and $a'_{i} \models p(x)$, $b'_{i} \models q(y)$, where $p(x)$ and $q(y)$ are complete types realized by sets consisting of at most $n$ independent realizations of some minimal partial type over $C'$. So applying the remarks after Lemma \ref{existential Peretz theorem}, there is a base $C \supset C'$, and a $C$-indiscernible sequence $\{a_i, b_i\}_{i < \omega}$ exhibiting the instability of $x \nind_{C} y$ between types of rank $n$, such that the limit type of $\{b_i\}_{i < \omega}$ over $C$ does not fork over $Cb_{0} \ldots b_{n-2}$, and such that $a_{i} \models p(x)$, $b_{i} \models q(y)$. Thus $a_{i}$, $b_{j}$ are each sets consisting of at most $n$ independent realizations of a minimal partial type over $C$ (after dropping realizations of algebraic types over $C$). Without loss of generality, assume $C$ is empty.

    Let $B$ be the canonical base of the limit type of $\{ b_i\}_{i < \omega}$. Then because the limit type does not fork over $b_{0} , \ldots, b_{n-1}$, $B \subseteq \mathrm{acl}^{\mathrm{eq}}(b_{0} , \ldots, b_{n-1})$. So $\mathrm{SU}(B) \leq n^{2}$ by the bound arising from the Lascar equation, because $\mathrm{SU}(b_{i}) \leq n$. The following arguments involving canonical bases will be standard, but we give them anyway, using only that fact that $\mathrm{Cb}(p)$ for $p \in S(B)$ is contained in $\mathrm{acl}^{\mathrm{eq}}(B)$, and the fact that canonical bases are preserved under nonforking extensions. Extend $\{ b_i\}_{i < \omega}$ to an indiscernible sequence $\{ b_i\}_{i \in \mathbb{Q}}$; its \textit{limit type} will be the type over  $\{ b_i\}_{i \in \mathbb{Q}}$ consisting of formulas satisfied by $b_{j}$ for arbitrarily large $j$. Notice the limit type of $\{ b_i\}_{i \in \mathbb{Q}}$ does not fork over $\{ b_i\}_{i \in \mathbb{Z}_{\geq 0}}$ (being finitely satisfiable in $\{ b_i\}_{i \in \mathbb{Z}_{\geq 0}}$). So the canonical base of the limit type of $\{ b_i\}_{i \in \mathbb{Q}}$ is also $B$. Moreover, the limit type of $\{ b_i\}_{i \in \mathbb{Q}}$ does not fork over $\{b_{i}\}_{k \leq i \in \mathbb{Q}}$ for each $k < \omega$,  being finitely satisfiable there. So $B \subseteq \bigcap_{k \leq \omega} \mathrm{acl}^{\mathrm{eq}}(\{b_{i}\}_{k \leq i \in \mathbb{Q} })$, and $\{ b_i\}_{i \in \mathbb{Q}}$ is indiscernible over $B$. In fact, $\{ b_i\}_{i \in \mathbb{Q}}$ is a Morley sequence over $B$: for $b_{\infty}$ realizing the limit type of $\{ b_i\}_{i \in \mathbb{Q}}$ over $B$, we must show $b_{\infty} \ind_{B} \{ b_i\}_{i \in \mathbb{Q}}$. Because $B \subseteq \mathrm{acl}^{eq}(\{ b_i\}_{i \in \mathbb{Q}})$, the canonical base of $\mathrm{tp}(b_{\infty}/ B \{ b_i\}_{i \in \mathbb{Q}})$ will be the same as the canonical base of $\mathrm{tp}(b_{\infty}/ \{ b_i\}_{i \in \mathbb{Q}})$, the limit type of $\{ b_i\}_{i \in \mathbb{Q}}$. So the canonical base of $\mathrm{tp}(b_{\infty}/ B \{ b_i\}_{i \in \mathbb{Q}})$ will be $B$, and $b_{\infty} \ind_{B} \{ b_i\}_{i \in \mathbb{Q}}$.

    We may find, similarly to the beginning of the proof of Theorem 4.1 of \cite{PW13} or the beginning of the proof of Lemma \ref{existential Peretz theorem} above, some $a$ with $\{b_{i}\}_{i > 0}$ indiscernible over $a\{b_{i}\}_{i \leq 0}$, $\{b_{i}\}_{i \leq 0}$ indiscernible over $a\{b_{i}\}_{i > 0}$ (where $i$ ranges over $\mathbb{Q}$), $a\nind b_0$, and $a \ind b_1$. Because $B \subseteq \bigcap_{k \leq \omega} \mathrm{acl}^{\mathrm{eq}}(\{b_{i}\}_{k \leq i})$, it is also the case that  $\{b_{i}\}_{i > 0}$ is indiscernible over $Ba\{b_{i}\}_{i \leq 0}$, and $\{b_{i}\}_{i \leq 0}$ is indiscernible over $B a\{b_{i}\}_{i > 0}$. Similarly to the beginning of the proof of Theorem 4.1 of \cite{PW13}, we show that $a \ind_{B} \{b_{i}\}_{i \in \mathbb{Q}}$. By transitivity and finite character, it suffices to show that for all finite $S \subset \mathbb{Q}$ and $i \in \mathbb{Q} \backslash S$, $a \ind_{B\{b_{j}\}_{j \in S}} b_{i}$. By symmetry and base monotonicity, it suffices to show that $b_{i} \ind_{B} a \{b_{j}\}_{j \in S} $. By the instances of indiscernibility, $\mathrm{tp}(b_{i}/ a B \{b_{j}\}_{j < i}\{b_{j}\}_{j \in S})$ is finitely satisfiable in $\{b_{j}\}_{j < i}$, where $j$ ranges over $\mathbb{Q}$. So $b_{i} \ind_{B\{b_{j}\}_{j < i}} a \{b_{j}\}_{j \in S} $. Then by transitivity, to show $b_{i} \ind_{B} a \{b_{j}\}_{j < i} \{b_{j}\}_{j \in S} $ and thus $b_{i} \ind_{B} a \{b_{j}\}_{j \in S} $, it suffices to show $b_{i} \ind_{B} \{b_{j}\}_{j < i}  $. But this is just the fact that  $\{ b_i\}_{i \in \mathbb{Q}}$ is a Morley sequence over $B$. 

    Now for $i \in \mathbb{Q}$, let us choose $a^{*}_{i}$ such that $a^{*}_{i} \{b_{j}\}_{j \leq i} \{b_{j}\}_{j > i} \equiv_{B} a \{b_{j}\}_{j \leq 0} \{b_{j}\}_{j > 0} $. (More explicitly, $a^{*}_{i} b_{j_{1}+i} \ldots b_{j_{n}+i}  \equiv_{B} a b_{j_{1}} \ldots b_{j_{n}} $ for $j_{1} < \ldots < j_{n} \in \mathbb{Q} $.) Moreover, choose the $a^{*}_{i}$ so that they form an independent set over $B \{b_{i}\}_{i \in \mathbb{Q}}$. Note that by symmetry, this last condition is equivalent to $a^{*}_{i_{1}} \ldots a^{*}_{i_{n}} \ind_{B \{b_{i}\}_{i \in \mathbb{Q}}} a^{*}_{i} $ for $i_{1}< \ldots < i_{n} < i $. That is the same thing, relative to $a^{*}_{i} \{b_{j}\}_{j \leq i} \{b_{j}\}_{j > i} \equiv_{B} a \{b_{j}\}_{j \leq 0} \{b_{j}\}_{j > 0} $, as saying $a^{*}_{i_{1}} \ldots a^{*}_{i_{n}}$ do not satisfy a formula of the form $\varphi(x, b_{j_{1}} \ldots b_{j_{m}} a_{*}^{i})$, for $\varphi(x, y)$ a formula with parameters in $B$, where $\varphi(x, b_{j_{1}-i} \ldots b_{j_{m}-i} a)$ forks over $B \{b_{i}\}_{i \in \mathbb{Q}}$. So independence of the $a^{*}_{i}$ over $B \{b_{i}\}_{i \in \mathbb{Q}}$ is implied by the EM-type of $\{a^{*}_{i} b_{i}\}_{i < \omega}$ over $B$ (see Subclaim 7.12 of \cite{LongKoponenConjecturePreprint} for a similar argument). Thus we can even find $a^{*}_{i}$ with $a^{*}_{i} \{b_{j}\}_{j \leq i} \{b_{j}\}_{j > i} \equiv_{B} a \{b_{j}\}_{j \leq 0} \{b_{j}\}_{j > 0} $, independent over $B \{b_{i}\}_{i \in \mathbb{Q}}$, such that $\{a^{*}_{i} b_{i}\}_{i < \omega}$ forms a $B$-indiscernible sequence.
    
    By $a^{*}_{i} \{b_{j}\}_{j \leq i} \{b_{j}\}_{j > i} \equiv_{B} a \{b_{j}\}_{j \leq 0} \{b_{j}\}_{j > 0} $, $a\nind b_0$, and $a \ind b_1$,  $\{a^{*}_{i} b_{i}\}_{i < \omega}$ will exhibit instability of $\nind$ between types realized by at most $n$ realizations of a minimal partial type. That is, $a^{*}_{i} \nind b_{j}$ exactly when $i > j$, and $a^{*}_{i}$, $b_{i}$ are sets consisting of at most $n$ realizations of a minimal partial type. Moreover, by $a^{*}_{i} \{b_{j}\}_{j \leq i} \{b_{j}\}_{j > i} \equiv_{B} a \{b_{j}\}_{j \leq 0} \{b_{j}\}_{j > 0} $ and $a \ind_{B} \{b_{i}\}_{i \in \mathbb{Q}}$, $a^{*}_{i} \ind_{B} \{b_{i}\}_{i \in \mathbb{Q}}$ for $i \in \mathbb{Q}$. By this and independence of the $a^{*}_{i}$ over $B \{b_{i}\}_{i \in \mathbb{Q}}$ (as well as symmetry, transitivity, base monotonicity), $\{a^{*}_{i} b_{i}\}_{i < \omega}$ forms a Morley sequence over $B$. So its limit type over $B$ does not fork over $\{a^{*}_{i} b_{i}\}_{i < \omega}$, by finite satisfiability, nor over $B$. Thus the canonical base of the limit type of $\{a^{*}_{i} b_{i}\}_{i < \omega}$ is $B$. Because $\mathrm{SU}(B) \leq n^{2}$, the canonical base of the limit type of $\{a^{*}_{i} b_{i}\}_{i < \omega}$  has $\mathrm{SU}$-rank at most $n^{2}$, as desired.

\end{proof}

We see now that if the forking relation is unstable over a base between types of rank $n$, there is some base $C$ and $C$-indiscernible sequence $\{a_{i},b_{i}\}_{i < \omega}$ exhibiting instability of $\nind_{C}$, such that the terms $a_{i}b_{i}$ are sets consisting of a bounded number (at most $2n$) realizations of a minimal partial type over $C$, and the canonical base of the limit type of $\{a_{i},b_{i}\}_{i < \omega}$ over $C$ has bounded $\mathrm{SU}$-rank (at most $n^{2}$) over $C$. So we have obtained, as an instance of instability for $\nind_{C}$, the kind of indiscernible sequence that we said we wanted to examine in our refinement of the multi-experiment argument in Fact \ref{Pillay-Scanlon argument}: an indiscernible sequence whose terms themselves consist of a bounded number of realizations of a minimal partial type, and such that the canonical base of the limit type has bounded $\mathrm{SU}$-rank. We now prove our refinement, controlling the forking between realizations of a minimal type within these sequences.

Without loss of generality, we restrict our attention to indiscernible sequences over the empty set. Let $p(x) \in S(\emptyset)$ be a minimal partial type and let $\{a_{i}\}_{i < \omega}$ be an indiscernible sequence, where for $i < \omega$ the $a_{i} = \{a^{j}_{i}\}^{n}_{j = 1}$ are themselves sets consisting of at most $n$ realizations of $p(x)$. (Without loss of generality we use the notation $a_{i} = \{a^{j}_{i}\}^{n}_{j = 1}$ denoting exactly $n$ realizations.) Let $S := \omega \times \{1, \ldots n\}$; then we can identify the induced sub-pregeometry of $p(x)$ on all of the $a^{j}_{i}$ taken together (for all $i < \omega$, $1 \leq j \leq n$) with a pregeometry on $S$. Here we just use the map $(i, j) \mapsto a^{j}_{i}$ for $(i, j) \in S$. Then for two indiscernible sequences (possibly in two different theories) whose terms are sets consisting of at most $n$ realizations of a minimal partial type, we say their pregeometries are \textit{equivalent} if they yield the same pregeometry on $S$. We may now state the objective of the refined multi-experiment argument.

\begin{lemma}\label{refined Pillay-Scanlon argument}

Let $n, m < \omega$ be fixed values. Consider all of the theories $T$, and indiscernible sequences $\{a_{i}\}_{i< \omega}$, satisfying the following:

\begin{itemize}
    \item the terms $a_{i}$ are sets consisting of at most $n$ realizations $a^{j}_{i}$ of a minimal partial type, and

    \item the canonical base of the limit type of $\{a_{i}\}_{i< \omega}$ has $\mathrm{SU}$-rank at most $m$.
\end{itemize}

Then among these, there are only finitely many pregeometries on the $a^{i}_{j}$ up to equivalence. (Notice from the statement here that these pregeometries will depend only on $n$ and $m$, not the theory $T$ and indiscernible sequence $\{a_{i}\}_{i< \omega}$.)

\end{lemma}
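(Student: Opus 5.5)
Let $B$ be the canonical base of the limit type of $\{a_{i}\}_{i<\omega}$, and extend the sequence to $\{a_i\}_{i \in \mathbb{Q}}$. As in the proof of Proposition \ref{small canonical base}, $B \subseteq \mathrm{acl}^{\mathrm{eq}}$ of every final segment, the sequence is indiscernible over $B$, and it is a Morley sequence over $B$. The plan is to show that the whole pregeometry on $S = \omega \times \{1,\dots,n\}$ is determined by a finite amount of data.

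The data consist of the following:
\begin{itemize}
\item the pregeometry on an initial block $a_0 \dots a_{m}$ (a pregeometry on $(m+1)n$ points);
\item the pregeometry over $B$ on a single term (a pregeometry on $n$ points), recording the closure operator $\mathrm{acl}(B\,\cdot) \cap a_i$;
\item for each subset $X$ of the indices of $a_i$, the dimension $\dim(X/B)$, together with the relative data below.
\end{itemize}
Each item ranges over a finite set depending only on $n$ and $m$, so finiteness will follow once we show this tuple of data determines $\dim$ of every finite subset of $S$.

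\emph{Step 1 (refined multi-experiment bound).} I would show that $B \subseteq \mathrm{acl}^{\mathrm{eq}}(a_0 \dots a_{m-1})$. I would argue as for Fact \ref{Pillay-Scanlon argument}. If $B \not\subseteq \mathrm{acl}^{\mathrm{eq}}(a_{<k})$, then $a_k \nind_{a_{<k}} B$, since otherwise $\mathrm{tp}(a_k/Ba_{<k})$ would not fork over $a_{<k}$ and its canonical base $B$ would lie in $\mathrm{acl}^{\mathrm{eq}}(a_{<k})$. By symmetry $B \nind_{a_{<k}} a_k$, so $\mathrm{SU}(B/a_{<k})$ strictly drops at each such step. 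As $\mathrm{SU}(B) \le m$, there are at most $m$ such steps, and indiscernibility forces them to be the first ones. Consequently, for every finite $F \subseteq S$, writing $F_{<m}$ for the part of $F$ inside $a_{<m}$ and $F_i = F \cap a_i$, we get
$$\dim(F) = \dim(F_{<m}) + \sum_{i \ge m} \dim(F_i / B\, a_{<m}) = \dim(F_{<m}) + \sum_{i\geq m} \dim(F_i/B),$$
using that $\{a_i\}_{i \ge m}$ is Morley over $B \subseteq \mathrm{acl}^{\mathrm{eq}}(a_{<m})$. Here dimensions over $B$ are computed via $\mathrm{acl}$, equivalently $\mathrm{SU}$-rank, of realizations of the minimal type.

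\emph{Step 2.} Next I would express $\dim(F_i/B)$ in terms of the pregeometry on $a_i$ together with $a_{<m}$. Since $B \subseteq \mathrm{acl}^{\mathrm{eq}}(a_0\dots a_{m-1})$, and by indiscernibility also $B \subseteq \mathrm{acl}^{\mathrm{eq}}$ of any $m$ terms lying above or below, we have $\dim(F_i/B) = \dim(F_i / a_{j_1}\dots a_{j_m})$ for any indices $j_1 < \dots < j_m$ distinct from $i$ with the Morley property. This is computable from the pregeometry on $m+1$ terms, which by indiscernibility is the same as the pregeometry on $a_0 \dots a_m$. The subtle point is that $F_{<m}$ and the $F_i$ for $i \ge m$ must be combined consistently. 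The displayed formula handles this, since conditioning on all of $a_{<m}$ rather than on $F_{<m}$ is justified by Morleyness over $B\,a_{<m}$ combined with additivity: $\dim(F) = \dim(F_{<m}) + \dim(F_{\ge m}/F_{<m})$, and $\dim(F_{\ge m}/F_{<m})$ needs care, because $F_{<m}$ need not contain $B$ in its closure.

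I expect this to be the main obstacle. To handle it, I would instead enlarge $F$ to $F \cup a_{<m}$ and use submodularity, or better, take the window to be the first $m$ terms occurring among the indices of $F$ rather than $a_{<m}$. Indiscernibility lets one shift so that $F$ involves terms $i_1 < \dots < i_r$, with $B \subseteq \mathrm{acl}^{\mathrm{eq}}(a_{i_1}\dots a_{i_m})$ only when $F$ uses full terms. So I would reduce to computing $\dim(F)$ via the closure of $F$ inside the full terms $a_{i_1},\dots,a_{i_r}$. Concretely, I would compute $\dim(F)$ as an element of the pregeometry on $a_{i_1}\cup\dots\cup a_{i_r}$. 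That pregeometry is determined by the pregeometry on $m$ full terms plus the Morley decomposition over $B$, and the closure of $F$ in it is determined by that pregeometry. The pregeometry on $rn$ points is therefore determined by finitely many parameters for all $r$, namely the pregeometry on $m+1$ terms, so the pregeometry on $S$ is determined up to equivalence. Since there are finitely many pregeometries on $(m+1)n$ points, the lemma follows.
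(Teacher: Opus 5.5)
There is a genuine gap. Your Step 1 inclusion $B\subseteq\mathrm{acl}^{\mathrm{eq}}(a_0\ldots a_{m-1})$ is fine; it is Fact \ref{Pillay-Scanlon argument}. The displayed formula, however, is false: you cannot replace conditioning on $F_{<m}$ by conditioning on $B\,a_{<m}$. Take $F=a_m\cup\dots\cup a_{2m-1}$. Then $F_{<m}=\emptyset$ and the formula gives $m\,\dim(a_0/B)$, whereas by indiscernibility $\dim(F)=\dim(a_0\ldots a_{m-1})=m\,\dim(a_0/B)+\mathrm{SU}(B)$.

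In general, Morleyness over $B$ and the Lascar equality give $\dim(F)=\sum_i\dim(F_i/B)+\mathrm{SU}(B)-\mathrm{SU}(B/F)$. So everything hinges on $\mathrm{SU}(B/F)$, i.e.\ on how much of the canonical base the possibly partial slices $F_i$ pin down, and your repair never controls this. The claim that the pregeometry on $a_{i_1}\cup\dots\cup a_{i_r}$ ``is determined by the pregeometry on $m$ full terms plus the Morley decomposition over $B$'' is exactly what must be proved. The Morley decomposition only computes dimensions of sets over which $B$ is already algebraic, e.g.\ sets containing $m$ full terms. A fixed window of the first $m$ occurring indices does not work either: $F$ may meet those terms only partially, so $B$ need not be algebraic over the window part of $F$. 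The drops of $\mathrm{SU}(B/\cdot)$ can then happen at slices arbitrarily far out, at positions depending on $F$. Submodularity only yields inequalities, so enlarging $F$ to $F\cup a_{<m}$ does not help.

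The missing idea is the paper's bookkeeping of where these drops occur. Since $\mathrm{SU}(B)\le m$, along any increasing list of slices $\mathrm{SU}(B/\cdot)$ drops at most $m$ times. Which patterns $S\subseteq\{1,\dots,n\}$ cause a drop after previously chosen patterns $S_0,\dots,S_{k-1}$ is recorded in a finite tree of height at most $m$: the children of the node $S_0,\dots,S_{k-1}$ are the $S$ with $B\nind_{a_0^{S_0}\cdots a_{k-1}^{S_{k-1}}}a_k^{S}$.

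Given $F$, walk greedily down the tree, at each step selecting the next slice of $F$ whose pattern labels a child of the current node. Let $G$ be the union of the at most $m$ selected slices, and $G_{<i}$ its part in terms before $i$. By construction and $B$-indiscernibility, each unselected slice satisfies $F_i\ind_{G_{<i}}B$. Combined with $F_i\ind_B\{a_j\}_{j\neq i}$, transitivity and base monotonicity give $F_i\ind_G\{a_j\}_{j\neq i}$. So the unselected slices are independent over $G$, and $\dim(F)$ follows by additivity from dimensions of sets lying in at most $m+1$ terms.

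Your stronger target, that the pregeometry on $m+1$ terms alone determines everything, is in fact true. The tree's edge condition is equivalent to $\dim(a_k^S/a_0^{S_0}\cdots a_{k-1}^{S_{k-1}})>\dim(a_0^S/a_1\cdots a_m)$. But that only follows once this greedy independence argument is in place.
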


\begin{proof}

Let $n, m < \omega$ be fixed and $\{a_{i}\}_{i < \omega}$ an indiscernible sequence as in the statement. Without loss of generality, for notational convenience we assume $a_{i}$ consists of exactly $n$ realizations $a^{1}_{i} \ldots a^{n}_{i}$ of a minimal partial type. To this indiscernible sequence, we associate a finite tree of height at most $m$ whose non-root nodes are labeled by nonempty subsets of $\{1, \ldots, n\}$. For $S \subseteq \{1, \ldots, n\}$, $i < \omega$, let $a^{S}_{i} := \{a^{j}_{i}\}_{j \in S} \subset a_{i}$. Let $A$ be the canonical base of the limit type of $\{a_{i}\}_{i < \omega}$; just as in the proof of the previous proposition, $\{a_{i}\}_{i < \omega}$ is a Morley sequence over $A$. Define the tree inductively as follows: from the root node, let the immediate successors consist of one node each labeled by each subset $S \subseteq \{1, \ldots, n\}$ for which $A \nind a^{S}_{0}$ (with an arbitrary left-to-right order on the immediate successors). Now assume the tree has been defined up to height $k < \omega$. We define the immediate successors of a height-$k$ node labeled by $S_{k-1}$, ending the path in the tree the labels of whose successive non-root nodes are $S_{0}, \ldots, S_{k-1}$. These immediate successors (again, ordered arbitrarily from left to right) will consist of one node each labeled by each subset $S \subseteq \{1, \ldots, n\}$ for which $A\nind_{a^{S_{0}}_{0}\ldots a_{k-1}^{S_{k-1}}} a^{S}_{k}$. This tree must have height at most $m$. To see this, otherwise from a path to a node of height $m+1$, we will obtain a chain of $m+1$ forking extensions starting with $\mathrm{tp}(A)$. And this will contradict that $\mathrm{SU}(A) \leq m$. (This last observation is essentially the content of the original multi-experiment argument.)

For each of the finitely many trees $\mathcal{T}$ of height at most $m$, such that the non-root nodes are each labeled by nonempty subsets of $\{1, \ldots , n\}$, and the immediate successors of a fixed node are labeled with distinct subsets, we associate to the tree $\mathcal{T}$ a function $f_{\mathcal{T}} : \mathcal{P}(\omega \times \{1, \ldots, n\}) \to \mathcal{P}(\omega \times \{1, \ldots, n\})$. This function will satisfy the assertion that for each $S \subset \omega \times \{1, \ldots, n\}$, $f_{\mathcal{T}}(S) \subset S$. In fact, $f_{\mathcal{T}}(S)$ will be the union of at most $m$ many sets of the form $(\{i\} \times \{1, \ldots n\})\cap S$ for $i < \omega$. To define $f(S)$, we define:

\begin{itemize}

\item A path through $\mathcal{T}$, starting with the root, whose $i$th node is always an immediate successor of the $(i-1)$st node; this last node of this path may or may not  be a leaf node of the tree. We will use $T_{1}, \ldots T_{k}$ to denote the labels of the $k \leq m$ non-root nodes of this path (in the order of these nodes along the path, so this list will determine the path.)

\item An increasing sequence $i_{1} < \ldots < i_{k} < \omega$.

\end{itemize}

To define the first node of this path and to define $i_{1}$, let $i_{1}$ be the smallest $i_{1} < \omega$, if it exists, for which $S^{i_{1}} := \{\ell \in \{1, \ldots n\}: (i_{1}, \ell) \in S \}$ is a label of some immediate successor of the root node. Then choose the immediate successor labeled by $S^{i_{1}} =: T_{1}$. If such $i_{1}$ does not exist, terminate the path at the root node. Let us now choose $T_{j+1}$ and $i_{j+1}$, given $T_{1}, \ldots T_{j}$ for $j < k$ determining the first $j$ non-root nodes of the path, and $i_{1} < \ldots < i_{j} < \omega$. To find this label, let $i_{j+1}$ be the smallest $i_{j+1} < \omega$ with $i_{j+1} > i_{j}$, if such $i_{j+1}$ exists, for which $S^{i_{j+1}} = \{\ell \in \{1, \ldots n\}: (i_{j+1}, \ell) \in S \}$ is a label of some immediate successor of the last node on the path with labels $T_{1}, \ldots T_{j}$. Similarly to the first step, choose the immediate successor labeled by $S^{i_{j+1}} =: T_{j+1}$, and if such $i_{j+1}$ does not exist, terminate the construction of the path. Then $f(S)$ will be defined as follows:

$$f(S) := \bigsqcup^{k}_{j = 1} (\{i_{j}\} \times S^{i_{j}} ) .$$

To summarize this construction informally, we found the first ``slice" of $S$ representing an immediate successor of the root node, added it to $f(S)$ and went to that immediate successor, and continued in this manner: at each step we found the next ``slice" of $S$, above the slices we already added, representing an immediate successor of our current node, and added it to $f(S)$ and went to that immediate successor. We ended upon reaching a leaf node, or a node with no immediate successors represented by any ``slice" above the ones we already added.

Now for $S \subset \omega \times \{1, \ldots n\}$, let $a^{S} := \{a^{j}_{i}: (i, j) \in S\}$. We will have proven our lemma if we can show that the assignment of $\mathrm{SU}$-ranks to finite sets from among all of the $a^{j}_{i}$, or equivalently the pregeometry on the $a^{j}_{i}$, is determined by the following:

\begin{itemize}

\item the function $f_{\mathcal{T}}$

\item the $\mathrm{SU}$-ranks of the sets of $a^{j}_{i}$ contained in a union of $m+1$ of the $a_{i}$

\end{itemize}

Note that there are finitely many possibilities for $f_{\mathcal{T}}$ because (as we argued using the main idea of the multi-experiment argument) the tree $\mathcal{T}$ must have height at most $m$, so there are finitely many possibilities for $\mathcal{T}$. There are finitely many possibilities for the  $\mathrm{SU}$-ranks of sets contained in $m+1$ terms, by indiscernibility. So it really is the case that we just need to show that all of the $\mathrm{SU}$-ranks of sets consisting of finitely many $a^{j}_{i}$ are determined by the above.

This will be the case if we can show that, for finite $S \subset \omega \times \{1, \ldots, n\}$, and $a^{S}_{i}$ the set of $a^{j}_{i}$ for which $(i, j) \in S$, the $a^{S}_{i}$ for which $S \cap (\{i\} \times \{1, \ldots n\}) \not \subset f(S)$ form an independent set over $a^{f(S)}$. (Note that $a^{f(S)}$ is the rest of $a^{S}$ outside these $a^{S}_{i}$, so the disjoint union of those $a^{S}_{i}$ such that $S \cap (\{i\} \times \{1, \ldots n\} ) \subset f(S)$.) Specifically, when $S \cap ( \{i\} \times \{1, \ldots n\} ) \not \subset f(S)$, $\mathrm{SU}(a^{S}_{i}a^{f(S)})$ will be the $\mathrm{SU}$-rank of a set contained in a union of $m+1$ of the $a_{\ell}$. So if these $a^{S}_{i}$ form an independent set over $a^{f(S)}$, $\mathrm{SU}(a^{S})$ will be determined from the $\mathrm{SU}(a^{S}_{i}a^{f(S)})$ by the Lascar equation. We will then be done, because $a^{S}$ is an arbitrary finite subset from among the $a^{j}_{i}$.

To show that the $a^{S}_{i}$ for which $S \cap \{i\} \times \{1, \ldots n\} \not \subset f(S)$ are independent over $a^{f(S)}$, it suffices to show that for each such $a^{S}_{i}$, $a^{S}_{i} \ind_{a^{f(S)}} \{a_{j}\}_{j < \omega, j\neq i }$. But by construction, and $A$-indiscernibility of $\{a_{i}\}_{i < \omega}$, $A\ind_{a^{(f(S) \cap (\{0, \ldots, i-1\} \times \{1, \ldots n\}))}}  a^{S}_{i} $. So by symmetry, $a^{S}_{i} \ind_{a^{(f(S) \cap (\{0, \ldots, i-1\} \times \{1, \ldots n\}))}} A $. Because $\{a_{j}\}_{j < \omega}$ is a Morley sequence over $A$ (and symmetry, transitivity, base monotonicity), $ a^{S}_{i} \ind_{A} \{a_{j}\}_{j < \omega, j\neq i }$. Thus, by base monotonicity, $ a^{S}_{i} \ind_{Aa^{(f(S) \cap (\{0, \ldots, i-1\} \times \{1, \ldots n\}))}} \{a_{j}\}_{j < \omega, j\neq i }$. So by transitivity,  $ a^{S}_{i} \ind_{a^{(f(S) \cap (\{0, \ldots, i-1\} \times \{1, \ldots n\}))}} \{a_{j}\}_{j < \omega, j\neq i }$. And by base monotonicity, $a^{S}_{i} \ind_{a^{f(S)}} \{a_{j}\}_{j < \omega, j\neq i }$, as desired.

\end{proof}

We are now ready to conclude. To prove Theorem \ref{main theorem, restated}, let us start by taking the potentially infinite set $\mathcal{G}_{n}$ as constructed before, within the proof of Fact \ref{standard pregeometric interpretation, restated}. Recall that this was the set consisting of pregeometries $(S, \mathrm{cl})$, where $S$ can be written as a union $\bigcup_{i < \kappa} (a_{i} \cup b_{i})$ for sets $a_{i}, b_{i}$ of size at most $n$, such that $\mathrm{dim}(a_{i} \cup a_{j}) < \mathrm{dim}(a_{i}) + \mathrm{dim}(a_{j})$ exactly when $i > j$.  (Remember $\kappa$ was the cardinal large enough to apply the Erdős–Rado theorem over a finite set.) And our assertion from Fact \ref{standard pregeometric interpretation, restated} about these pregeometries was that a failure of $x\nind_{C}y$ to be stable for some base $C$ between types of rank $n$ was equivalent to the pregeometry on some minimal partial type over a finite set embedding one of the $\mathcal{G}_{n}$. So, it will be enough to find finitely many pregeometries \textit{within this $\mathcal{G}_{n}$} such that, assuming that the forking relation is unstable over a base between types of rank $n$, there is some minimal partial type over a finite set whose pregeometry embeds one of these finitely many pregeometries.

Suppose that the forking relation is unstable over a base between types of rank $n$. Then by Proposition \ref{small canonical base}, there exists a finite set $C$ and a $C$-indiscernible sequence $\{a_i, b_i\}_{i < \omega}$ such that $a_{i} \nind_{C} b_{j}$ exactly when $i > j$, such that the canonical base of the limit type of $\{a_i, b_i\}_{i < \omega}$ over $C$ has $\mathrm{SU}$-rank at most $n^{2}$ over $C$, and such that $a_{i}$, $b_{j}$ are each sets consisting of at most $n$ realizations of a minimal partial type $p(x)$ over $C$. By Lemma \ref{refined Pillay-Scanlon argument}, there are finitely many possibilities depending only on $n$, up to equivalence, for the pregeometry on all of the realizations $a^{j}_{i}$, $b^{j}_{i}$ of $p(x)$ within the $a_{i}$, $b_{i}$. Let $\{a_i, b_i\}_{i < \kappa}$ be a $C$-indiscernible sequence extending $\{a_i, b_i\}_{i < \omega}$. Then the pregeometry of the realizations $a^{j}_{i}$, $b^{j}_{i}$ within the terms $\{a_i, b_i\}_{i < \kappa}$ gives an embedding into the pregeometry on $p(x)$ of one of the members of $\mathcal{G}_{n}$, as in the proof of Fact \ref{standard pregeometric interpretation, restated}. But because $\{a_i, b_i\}_{i < \kappa}$ is a $C$-indiscernible sequence extending the $C$-indiscernible sequence  $\{a_i, b_i\}_{i < \omega}$, the pregeometry on the $a^{j}_{i}$, $b^{j}_{i}$ for $i < \omega$ determines the pregeometry on the $a^{j}_{i}$, $b^{j}_{i}$ for $i < \kappa$. So the member of $\mathcal{G}_{n}$ embedding into the pregeometry on $p(x)$ is determined by the pregeometry on $a^{j}_{i}$, $b^{j}_{i}$ for $i < \omega$ up to equivalence. Thus there are finitely many possibilities for this embedded pregeometry in $\mathcal{G}_{n}$, as desired.

\section{Canonical bases}\label{canonical base section}

One way of representing an instance of the instability of the forking relation $\nind_{C}$ over a base $C$ is by a parameter $a$ and $C$-indiscernible sequence $\{b_{i}\}_{i \in (-\infty, -1 ] \sqcup [1, \infty) \subseteq \mathbb{Q}}$, where $\{b_{i}\}_{i \geq 1}$ is $Ca\{b_{i}\}_{i \leq -1}$-indiscernible, $\{b_{i}\}_{i \leq -1}$ is $Ca\{b_{i}\}_{i \geq 1}$-indiscernible, $a \nind_{C} b_{-1}$, and $a \ind_{C} b_{1}$. (This corresponds to the beginning of the proof of Theorem 4.1 of \cite{PW13}, where $b_{-1}$, $b_{1}$ are essentially the realizations of the limit types discussed there.) As in the standard arguments within the proof of Proposition \ref{small canonical base}, when $\mathrm{Cb}$ is the canonical base of the limit type (in the positive direction) of $\{b_{i}\}_{i \in (-\infty, -1 ] \sqcup [1, \infty)}$ over $C$, $\{b_{i}\}_{i \in (-\infty, -1 ] \sqcup [1, \infty)}$ will be a Morley sequence over $C\mathrm{Cb}$. So $\mathrm{Cb}$ will also be the canonical base of the limit type in the negative direction of $\{b_{i}\}_{i \in (-\infty, -1 ] \sqcup [1, \infty)}$ over $C$, the canonical base of $\mathrm{tp}(b_{-1}/C\{b_{i}\}_{i \in (-\infty, -1 ) \sqcup [1, \infty)})$, and the canonical base of $\mathrm{tp}(b_{1}/C\{b_{i}\}_{i \in (-\infty, -1 ] \sqcup (1, \infty)})$. The argument for these claims will also be standard, and will be similar in all of the cases. For example, for $\mathrm{tp}(b_{-1}/C\{b_{i}\}_{i \in (-\infty, -1 ) \sqcup [1, \infty)})$, $\mathrm{tp}(b_{-1}/C\mathrm{Cb}\{b_{i}\}_{i \in (-\infty, -1 ) \sqcup [1, \infty)})$ is a common nonforking extension of $\mathrm{tp}(b_{-1}/C\mathrm{Cb})$, and also of $\mathrm{tp}(b_{-1}/C\{b_{i}\}_{i \in (-\infty, -1 ) \sqcup [1, \infty)})$ by finite satisfiability, so these last two have the same canonical base $\mathrm{Cb}$.

In our existential generalization of Peretz's theorem to higher ranks, Lemma \ref{existential Peretz theorem}, we show that if the forking relation is unstable over a base between types of rank $n$, there is a base $C$ and $a$, $\{b_{i}\}_{i \in (-\infty, -1 ] \sqcup [1, \infty)}$ exhibiting instability of $\nind_{C}$ between types of rank $n$ such that, say, $\mathrm{Cb} \subset \mathrm{acl}(Cb_{1}, \ldots, b_{k}, \ldots b_{n-1})$ (for integers $k$ with $1 \leq k \leq n-1$). A shortcoming of Lemma \ref{existential Peretz theorem} as stated is that its conclusion is dependent on the rank $n$. We will begin this section by restating our generalization of Peretz's results in a way that is independent of the rank.

\begin{proposition}\label{existential Peretz theorem, rank-independent statement}

In a supersimple theory of finite $\mathrm{SU}$-rank, suppose that the forking relation is unstable over a base. Then there is some base $C$, and  $a$, $\{b_{i}\}_{i \in (-\infty, -1 ] \sqcup [1, \infty)}$ exhibiting instability of $\nind_{C}$, such that the following is satisfied. Let $\mathrm{Cb}_{-}$ be the smallest $\mathrm{acl}_{C}^{\mathrm{eq}}$-closed subset of $\mathrm{acl}^{\mathrm{eq}}(\{b_{i}\}_{i \in (-\infty, -1 ) \sqcup [1, \infty)})$ such that  $a \ind_{\mathrm{Cb}_{-}} b_{-1}$; let $\mathrm{Cb}_{+}$ be the smallest $\mathrm{acl}_{C}^{\mathrm{eq}}$-closed subset of $\mathrm{acl}^{\mathrm{eq}}(\{b_{i}\}_{i \in (-\infty, -1 ] \sqcup (1, \infty)})$ such that  $a \ind_{\mathrm{Cb}_{+}} b_{1}$ and $a \nind_{C} \mathrm{Cb}_{+}$. Then $\mathrm{Cb}_{+}$, $\mathrm{Cb}_{-}$ are well-defined and equal to $\mathrm{acl}_{C}^{\mathrm{eq}}(\mathrm{Cb})$.
    
\end{proposition}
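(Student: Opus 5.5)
We would run the extremal argument from the proof of Lemma \ref{existential Peretz theorem} again, with a different choice of base. Among all bases $C$ and configurations $a$, $\{b_{i}\}_{i \in (-\infty,-1]\sqcup[1,\infty)}$ exhibiting instability of $\nind_{C}$, we choose one that first minimizes $\mathrm{SU}(a/C)$ and then, subject to that, maximizes the $\mathrm{SU}$-rank of the limit type. Write $\mathrm{Cb}$ for the canonical base of the limit type over $C$, and recall from the preceding discussion that the sequence is Morley over $C\mathrm{Cb}$. Moreover, $\mathrm{Cb}$ is the canonical base of both $\mathrm{tp}(b_{-1}/C\{b_{i}\}_{i\neq -1})$ and $\mathrm{tp}(b_{1}/C\{b_{i}\}_{i \neq 1})$. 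We will then show two things: that $E:=\mathrm{acl}^{\mathrm{eq}}_{C}(\mathrm{Cb})$ satisfies the defining conditions of $\mathrm{Cb}_{-}$ and $\mathrm{Cb}_{+}$, and that every other admissible $\mathrm{acl}^{\mathrm{eq}}_{C}$-closed set contains $E$. Together these give well-definedness (a smallest element exists) and the equality.

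First we check that $E$ is admissible. We have $E \subseteq \mathrm{acl}^{\mathrm{eq}}(C\{b_{i}\}_{i\neq \pm 1})$, because the limit type is finitely satisfiable in every tail and in every initial segment, as in the proof of Proposition \ref{small canonical base}. Imitating the argument there that $a\ind_{B}\{b_i\}$ (the Pillay--Wagner argument), we get $a \ind_{E} \{b_{i}\}$. In particular $a\ind_{E} b_{\pm 1}$. Since $a\nind_{C} b_{-1}$ and $b_{-1}\ind_{E}a$, transitivity forces $a\nind_{C}E$.

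Next we show minimality, which is where the extremal choice is used. Let $F$ be $\mathrm{acl}^{\mathrm{eq}}_{C}$-closed inside $\mathrm{acl}^{\mathrm{eq}}(C\{b_{i}\}_{i\neq -1})$ with $a\ind_{F} b_{-1}$. Suppose $E\not\subseteq F$. Then $\mathrm{tp}(b_{-1}/F)$ forks over $F$ into the extension $\mathrm{tp}(b_{-1}/C\{b_{i}\}_{i\neq -1})$, since a nonforking extension would have to have the same canonical base, which would give $\mathrm{Cb}\subseteq F$. We will use $F$ in the role of $D$ from the proof of Lemma \ref{existential Peretz theorem}, splitting into two cases according to whether $a\nind_{C} F$.
\begin{itemize}
\item If $a\nind_{C}F$, we rebase at $F$. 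The sequence is still indiscernible over a finite part of $F$ after shrinking, and $a$ has strictly smaller rank over $F$. To recover the order property over $F$ we need the pair $a\nind_F b_{-1}$ versus $a\ind_F b_1$. We do not yet see how to obtain $a \nind_{F} b_{-1}$: it would need $a\nind_{F}b_1$ to fail in the right direction, and otherwise we would flip the roles. This case is the analogue of Case 1 of that proof.
\item Otherwise $a\ind_{C}F$ and $a\ind_{F}b_{-1}$, which gives $a\ind_{C}b_{-1}$ and contradicts the instability.
\end{itemize}
The statement for $\mathrm{Cb}_{+}$ is handled the same way. There the extra requirement $a\nind_{C}\mathrm{Cb}_{+}$ is exactly what excludes the trivial choice $F=\mathrm{acl}^{\mathrm{eq}}(C)$.

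The main obstacle is the step showing that $\mathrm{tp}(b_{\pm1}/F)$ forking yields a configuration whose limit type has strictly larger rank. For this we plan to use the independence-theorem amalgamation of Case 2: build a Morley sequence over $F$ in $\mathrm{tp}(b_{1}/F)$ and amalgamate copies of $a$ along it. This requires $F$ to be boundedly closed ($\mathrm{acl}^{\mathrm{eq}}=\mathrm{bdd}$), and it requires $b_{-1}\equiv_{F}b_{1}$. The latter needs $F$ to lie in the part over which the two sides are indiscernible, which is not automatic. We expect to handle this by first replacing $F$ by $F\cap\mathrm{acl}^{\mathrm{eq}}(C\{b_{i}\}_{i\le -1}\cup\{b_i\}_{i>1})$ using the Morley property over $E$, though this reduction is the least certain part of the plan.
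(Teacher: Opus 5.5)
Your extremal choice and your check that $\mathrm{acl}^{\mathrm{eq}}_{C}(\mathrm{Cb})$ is admissible match the paper in substance. The paper gets $a \ind_{C\mathrm{Cb}} b_{\pm 1}$ more directly: $b_{-1} \ind_{C\{b_i\}_{i\neq -1}} a$ holds by finite satisfiability, and combined with $b_{-1} \ind_{C\mathrm{Cb}} \{b_i\}_{i \neq -1}$, transitivity finishes.

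The genuine gap is the minimality step, exactly where you flag uncertainty.
\begin{itemize}
\item Your case split on whether $a \nind_{C} F$ is vacuous. Your own second bullet shows $a \nind_{C} F$ always holds, so everything rests on the first bullet.
\item That first bullet is set up wrongly. You look for $a \nind_{F} b_{-1}$, but the hypothesis on $F$ is $a \ind_{F} b_{-1}$.
\item Neither rebasing nor the Case 2 amalgamation can be run on the original $b_{\pm 1}$. The configuration is not indiscernible over $F$, since $F$ lies in $\mathrm{acl}^{\mathrm{eq}}$ of part of it.
\item $b_{-1} \equiv_{F} b_{1}$ can really fail: $F$ may even contain $b_{1}$ itself.
\item Shrinking $F$ to an intersection, as you suggest, has no reason to preserve $a \ind_{F} b_{-1}$.
\end{itemize}

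The missing idea is to insert fresh elements between $b_{-1}$ and $b_{1}$, and to split cases on them.
\begin{itemize}
\item Extend to $\{b_{i}\}_{i \in \mathbb{Q}_{\neq 0}}$ so that $\{b_{i}\}_{i<0}$ is indiscernible over $Ca\{b_{i}\}_{i>0}$ and vice versa, with $a \nind_{C} b_{i}$ for $i<0$ and $a \ind_{C} b_{i}$ for $i>0$.
\item Since $F \subseteq \mathrm{acl}^{\mathrm{eq}}(C\{b_{i}\}_{i \in (-\infty,-1)\sqcup[1,\infty)})$, the inner segment $\{b_{i}\}_{i \in [-1,0)\sqcup(0,1)}$ is $F$-indiscernible. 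Each of its halves is indiscernible over $Fa$ together with the other half.
\item Fix $j \in (0,1)$ and split on $a \nind_{F} b_{j}$ versus $a \ind_{F} b_{j}$.
\end{itemize}

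In the first case, $a$ is independent over $F$ from every $b_{i}$ with $i \in [-1,0)$ and dependent on every $b_{i}$ with $i \in (0,1)$. Reversing the inner segment gives instability over the base $F$ with $\mathrm{SU}(a/F) < \mathrm{SU}(a/C)$. This is where $a \nind_{C} F$ is used, and it contradicts minimality.

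In the second case you have $b_{-1} \equiv_{F} b_{j}$, $a \ind_{F} b_{-1}$ and $a \ind_{F} b_{j}$. You also have $a \nind_{C} b_{-1}$, $a \ind_{C} b_{j}$ and $\mathrm{SU}(b_{-1}/F) > \mathrm{SU}(b_{-1}/C\mathrm{Cb})$. So the Case 2 amalgamation over $F = \mathrm{bdd}(F)$, along a Morley sequence in $\mathrm{tp}(b_{-1}/F)$, contradicts maximality.

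For $\mathrm{Cb}_{+}$, use the segment $\{b_{i}\}_{i \in (-1,0)\sqcup(0,1]}$ and $j \in (-1,0)$. No reversal is needed in the first case.
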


\begin{proof}

As in the proof of Lemma \ref{existential Peretz theorem}, let 
$C$,  $a$, $\{b_{i}\}_{i \in (-\infty, -1 ] \sqcup [1, \infty)}$ exhibit instability of $\nind_{C}$ such that:

\begin{itemize}
    \item $\mathrm{SU}(a/C)$ is minimized, and

    \item  subject to that minimization, $\mathrm{SU}(\mathrm{Cb}(b_{i}/\mathrm{Cb}))=\mathrm{SU}(\mathrm{Cb}(b_{i}/C\mathrm{Cb}))$ (the $\mathrm{SU}$-rank of the limit type over $C$) is maximized.
\end{itemize}

We show this is as desired. 

To start, $\mathrm{acl}_{C}^{\mathrm{eq}}(\mathrm{Cb})$ will itself satisfy the conclusion for $\mathrm{Cb}_{-}$, and $\mathrm{Cb}_{+}$ (before we show it is the smallest set such that this conclusion is satisfied). For $\mathrm{Cb}_{-1}$, this means that $a \ind_{\mathrm{acl}_{C}^{\mathrm{eq}}(\mathrm{Cb})} b_{-1}$, or equivalently $a \ind_{C\mathrm{Cb}} b_{-1}$. For $\mathrm{Cb}_{+}$, this means that $a \ind_{\mathrm{acl}_{C}^{\mathrm{eq}}(\mathrm{Cb})} b_{1}$, or equivalently $a \ind_{C\mathrm{Cb}} b_{1}$, and that $a \nind_{C}\mathrm{acl}_{C}^{\mathrm{eq}}(\mathrm{Cb})$, or equivalently $a \nind_{C}\mathrm{Cb}$. Let us start with the conditions for $\mathrm{Cb}_{-}$. We will argue as at the beginning of the proof of Theorem 4.1 of \cite{PW13}. By symmetry, our desired condition $a \ind_{C\mathrm{Cb}} b_{-1}$ is equivalent to $b_{-1} \ind_{C\mathrm{Cb}} a $. This will be true if $b_{-1} \ind_{C\mathrm{Cb}} a \{b_{i}\}_{i \in (-\infty, -1 ) \sqcup [1, \infty)}$. But $b_{-1} \ind_{C \{b_{i}\}_{i \in (-\infty, -1 ) \sqcup [1, \infty)} } a$ by finite satisfiability, and $b_{-1} \ind_{C\mathrm{Cb}} \{b_{i}\}_{i \in (-\infty, -1 ) \sqcup [1, \infty)}$ by the observations about $\mathrm{Cb}$ before this proposition's statement. So $b_{-1} \ind_{C\mathrm{Cb}} a \{b_{i}\}_{i \in (-\infty, -1 ) \sqcup [1, \infty)}$ by transitivity, as desired. Let us now prove that $\mathrm{Cb}$ satisfies the conditions for $\mathrm{Cb}_{-}$. The condition $a \ind_{C\mathrm{Cb}} b_{1}$ is as before. And the condition $a \nind_{C}\mathrm{Cb}$ follows by transitivity from $a \ind_{C\mathrm{Cb}} b_{-1}$ and $a \nind_{C} b_{-1}$.

In fact, the rest of the proof for $\mathrm{Cb}_{+}$ will be readily apparent from the rest of the proof for $\mathrm{Cb}_{-}$. So we will only give the rest of the proof for $\mathrm{Cb}_{-}$.

Let $\mathrm{Cb}' \subset \mathrm{acl}_{C}^{\mathrm{eq}}(\{b_{i}\}_{i \in (-\infty, -1 ) \sqcup [1, \infty)})$ be a $\mathrm{acl}_{C}^{\mathrm{eq}}$-closed set satisfying $a \ind_{C\mathrm{Cb}'} b_{-1}$. We show that $\mathrm{Cb}' \supset \mathrm{Cb}$, which will be what we need. We may extend $\{b_{i}\}_{i \in (-\infty, -1 ) \sqcup [1, \infty)}$ to a $C$-indiscernible sequence $\{b_{i}\}_{i \in \mathbb{Q}_{\neq 0}}$ such that $\{b_{i}\}_{i > 0}$ is $Ca\{b_{i}\}_{i < 0}$-indiscernible, $\{b_{i}\}_{i < 0}$ is $Ca\{b_{i}\}_{i >0}$-indiscernible, $a \nind_{C} b_{i}$ for $i < 0$, and $a \ind_{C} b_{i}$ for $i > 0$. Then $\{b_{i}\}_{i \in [-1, 0) \sqcup (0, 1)}$ will be indiscernible over $\mathrm{Cb}'$, because $\mathrm{Cb}' \subset \mathrm{acl}_{C}^{\mathrm{eq}}(\{b_{i}\}_{i \in (-\infty, -1 ) \sqcup [1, \infty)})$.

If $\mathrm{Cb}' \not \supset \mathrm{Cb}$, then $\mathrm{SU}(b_{-1}/\mathrm{Cb}') > \mathrm{SU}(b_{-1}/C\mathrm{Cb}) $. To see this, suppose $\mathrm{SU}(b_{-1}/\mathrm{Cb}') = \mathrm{SU}(b_{-1}/C\mathrm{Cb}) $. First, $\mathrm{tp}(b_{-1}/C\mathrm{Cb}\{b_{i}\}_{i \in (-\infty, -1 ) \sqcup [1, \infty)})$, so $\mathrm{tp}(b_{-1}/\mathrm{acl}_{C}^{\mathrm{eq}}(\{b_{i}\}_{i \in (-\infty, -1 ) \sqcup [1, \infty)}))$, is a nonforking extension of $\mathrm{tp}(b_{-1}/C\mathrm{Cb})$. So by $\mathrm{SU}(b_{-1}/\mathrm{Cb}') = \mathrm{SU}(b_{-1}/C\mathrm{Cb}) $, $\mathrm{tp}(b_{-1}/C\mathrm{Cb})$ and $\mathrm{tp}(b_{-1}/\mathrm{Cb}')$ will have a common nonforking extension. Thus the canonical base of $\mathrm{tp}(b_{-1}/\mathrm{Cb}')$ will be the same as the canonical base of $\mathrm{tp}(b_{-1}/C\mathrm{Cb})$, so equal to $\mathrm{Cb}$. Thus  $\mathrm{Cb}'   \supset \mathrm{Cb}$, a contradiction.

Moreover, $\mathrm{SU}(a /\mathrm{Cb}') < \mathrm{SU}(a/C)$. In other words, $a \nind_{C} \mathrm{Cb}' $. Otherwise, by $a \nind_{C} b_{-1}$, $a \ind_{\mathrm{Cb}'} b_{-1}$ and transitivity, we will have a contradiction.

By assumption, $a \ind_{\mathrm{Cb}'} b_{-1}$. Let $j \in (0, 1)$ be arbitrary. We are in one of two cases: either $a \nind_{\mathrm{Cb}'} b_{j}$ or $a \ind_{\mathrm{Cb}'} b_{j}$.

In the case $a \nind_{\mathrm{Cb}'} b_{j}$, we can conclude as in case 1 of the proof of Lemma \ref{existential Peretz theorem}, by replacing the base $C$ with $\mathrm{Cb}'$ and flipping the $\mathrm{Cb}'$-indiscernible sequence $\{b_{i}\}_{i \in [-1, 0) \sqcup (0, 1)}$. Here is where we use that $\mathrm{SU}(a /\mathrm{Cb}') < \mathrm{SU}(a/C)$, getting a contradiction to minimality of $\mathrm{SU}(a/C)$.

In the case $a \ind_{\mathrm{Cb}'} b_{j}$, we can conclude as in Case 2 of the proof of Lemma \ref{existential Peretz theorem}, by repeated applications of the independence theorem. Here, we use $\mathrm{SU}(b_{-1}/\mathrm{Cb}') > \mathrm{SU}(b_{-1}/C\mathrm{Cb}) $, getting a contradiction to maximality of  $\mathrm{SU}(b_{-1}/C\mathrm{Cb})$.

\end{proof}

We have completed our rank-indpenendent statement of our existential, higher-rank generalization of Peretz's theorem.

As we noted in Section \ref{existential Peretz theorem}, we have not yet ruled out the possibility that we can recover a \textit{universal} generalization of Peretz's theorem to higher ranks, by taking canonical bases or otherwise shrinking terms.

\begin{question}\label{term shrinking question}

In a finite-rank supersimple theory, for $\{a_{i}, b_{i}\}_{i < \omega}$ a $C$-indiscernible sequence exhibiting instability of $\nind_{C}$ between types of rank at most $n$, let $b'_{i} \subseteq \mathrm{acl}^{eq}(b_{i})$ be such that $b'_{i}=\mathrm{Cb}(a_{j}/Cb_{i})$ for $j > i$ and $\{a_{i}, b'_{i}\}_{i < \omega}$ is  a $C$-indiscernible sequence exhibiting instability of $\nind_{C}$ between types of rank at most $n$. Is $\{b'_{i}\}_{i\geq n-1}$ a Morley sequence over $b'_{0}\ldots b'_{n-2}$?

More generally, for $\{a_{i}, b_{i}\}_{i < \omega}$ a $C$-indiscernible sequence exhibiting instability of $\nind_{C}$ between types of rank at most $n$, must there be $a'_{i} \subseteq \mathrm{acl}^{eq}(a_{i})$, $b'_{i} \subseteq \mathrm{acl}^{eq}(b_{i})$ such that $\{a'_{i}, b'_{i}\}_{i < \omega}$ is a $C$-indiscernible sequence exhibiting instability of $\nind_{C}$ between types of rank at most $n$, and $\{b'_{i}\}_{i\geq n-1}$ is a Morley sequence over $b'_{0}\ldots b'_{n-2}$?
    
\end{question}

The proof of Peretz's universal result in rank $3$, Theorem 3, is a relatively ad-hoc, multi-case argument. It is not clear from this argument that his universal results can be recovered in higher ranks by dropping terms, as in the statement of this question.

\textbf{Acknowledgements:} The author would like to thank James Freitag for discussions surrounding geometric stability theory that inspired the use of the implicitly well-known geometric interpretation of stability of the forking relation over a base in finite-rank supersimple theories as a conceptual starting point for the main theorem of this paper. AI was used purely cosmetically in the proofreading of this paper.



\bibliographystyle{plain}
\bibliography{refs}

@article{ealy2016thorn,
  author    = {Ealy, Clifton and Onshuus, Alf},
  title     = {\textthorn-Forking and stable forking},
  journal   = {Revista de la Academia Colombiana de Ciencias Exactas, F{\'i}sicas y Naturales},
  volume    = {40},
  number    = {157},
  pages     = {683--689},
  year      = {2016},
  publisher = {Academia Colombiana de Ciencias Exactas, F{\'i}sicas y Naturales}
}

@misc{CastleTrichotomy,
  author = {Castle, Benjamin and Hasson, Assaf and Ye, Jinhe},
  title = {Zilber's Trichotomy in {H}ausdorff Geometric Structures},
  year = {preprint. Available at https://arxiv.org/abs/2405.02209. 2024}
}

@inproceedings{zilber1986structural,
  author    = {Zil'ber, Boris},
  title     = {Structural properties of models of $\aleph_1$-categorical theories},
  booktitle = {Logic, Methodology and Philosophy of Science VII},
  editor    = {Marcus, R. Barcan and et al.},
  series    = {Studies in Logic and the Foundations of Mathematics},
  volume    = {114},
  pages     = {115--128},
  year      = {1986},
  publisher = {North-Holland, Amsterdam}
}

@article{castle2024restricted,
  author    = {Castle, Benjamin},
  title     = {Restricted Trichotomy in Characteristic Zero},
  journal   = {Journal of the American Mathematical Society},
  volume    = {37},
  number    = {4},
  pages     = {1003--1077},
  year      = {2024},
  doi       = {10.1084/S0894-0347-2023-01037-9},
  publisher = {American Mathematical Society}
}

@misc{BrowerHillHyperimaginaries,
  author = {Brower, Donald and Hill, Cameron Donnay},
  title  = {On weak elimination of hyperimaginaries and its consequences},
  year   = {preprint. Available at https://arxiv.org/abs/1210.7883. 2012}
}

@article{casanovaspotier2018,
  author    = {Enrique Casanovas and Joris Potier},
  title     = {Stable Forking and Imaginaries},
  journal   = {Notre Dame Journal of Formal Logic},
  volume    = {59},
  number    = {4},
  pages     = {567--572},
  year      = {2018},
  publisher = {Duke University Press},
  doi       = {10.1215/00294527-2018-0010}
}

@misc{TakahashiConsequences,
  author = {Takahashi, Yuki},
  title = {Consequences of dependent dividing on burden},
  year = {preprint. Available at https://arxiv.org/abs/2511.00282. 2025}
}

@misc{GomezRosyTheories,
  author = {Miguel-G\'{o}mez, Alberto},
  title  = {On stable {K}im-forking and rosy theories},
  year   = {preprint. Available at https://arxiv.org/abs/2503.23936. 2025}
}

@article{HarnikHarrington1984,
  title     = {Fundamentals of forking},
  author    = {Harnik, Victor and Harrington, Leo},
  journal   = {The Journal of Symbolic Logic},
  volume    = {49},
  number    = {4},
  pages     = {1364--1364},
  year      = {1984},
  publisher = {Association for Symbolic Logic}
}

@article{ovchinnikov2025identifiable,
  author  = {Ovchinnikov, Alexey and Pillay, Anand and Pogudin, Gleb and Scanlon, Thomas},
  title   = {Identifiable Specializations for {ODE} Models},
  journal = {Systems \& Control Letters},
  volume  = {204},
  pages   = {106226},
  year    = {2025}
}

@article{meshkat2025algorithm,
  author  = {Meshkat, Nicolette and Ovchinnikov, Alexey and Scanlon, Thomas},
  title   = {Algorithm to Find New Identifiable Reparameterizations of Parametric Rational {ODE} Models},
  journal = {IEEE Transactions on Automatic Control},
  volume  = {70},
  number  = {10},
  pages   = {6688--6703},
  year    = {2025}
}

@article{Li21,
  author  = {Li, Wei and Ovchinnikov, Alexey and Pogudin, Gleb and Scanlon, Thomas},
  title   = {Elimination of Unknowns for Systems of Algebraic Differential-Difference Equations},
  journal = {Transactions of the American Mathematical Society},
  volume  = {374},
  pages   = {303--326},
  year    = {2021},
  doi     = {10.1090/tran/8219}
}

@article{Ovc21,
  author  = {Ovchinnikov, Alexey and Pillay, Anand and Pogudin, Gleb and Scanlon, Thomas},
  title   = {Computing All Identifiable Functions for {ODE} Models},
  journal = {Systems \& Control Letters},
  volume  = {157},
  pages   = {105030},
  year    = {2021},
  doi     = {10.1016/j.sysconle.2021.105030}
}

@article{Li23,
  author  = {Li, Wei and Ovchinnikov, Alexey and Pogudin, Gleb and Scanlon, Thomas},
  title   = {Algorithms Yield Upper Bounds in Differential Algebra},
  journal = {Canadian Journal of Mathematics},
  volume  = {75},
  number  = {1},
  pages   = {29--51},
  year    = {2023},
  doi     = {10.4153/S0008414X21000560}
}

@article{Ovc22,
  author  = {Ovchinnikov, Alexey and Pillay, Anand and Pogudin, Gleb and Scanlon, Thomas},
  title   = {Multi-experiment Parameter Identifiability of {ODEs} and Model Theory},
  journal = {SIAM Journal on Applied Algebra and Geometry},
  volume  = {6},
  number  = {3},
  pages   = {339--367},
  year    = {2022},
  doi     = {10.1137/21M1389845}
}

@article{BuechlerPillayWagner2001,
  author  = {Buechler, Steven and Pillay, Anand and Wagner, Frank O.},
  title   = {Supersimple theories},
  journal = {Journal of the American Mathematical Society},
  year    = {2001},
  volume  = {14},
  pages   = {109--124},
  doi     = {10.1090/S0894-0347-00-00355-1}
}

@article{BenYaacov2004,
  title = {Constructing an Almost Hyperdefinable Group},
  author = {Ben-Yaacov, Itay and Toma{\v{s}}i{\'c}, Ivan and Wagner, Frank O.},
  journal = {Journal of Mathematical Logic},
  volume = {4},
  number = {2},
  pages = {181--212},
  year = {2004},
  doi = {10.1142/S021906130400036X},
  url = {https://www.worldscientific.com/doi/10.1142/S021906130400036X}
}

@article{Tomasic01, author = "Ivan Tomašić", title = "{Geometric Simplicity Theory}", year = "PhD thesis. 2001", url = "https://webspace.maths.qmul.ac.uk/i.tomasic/th.pdf" }

@unpublished{Casanovas2006,
  author = {Casanovas, Enrique},
  title  = {Pregeometries and minimal types},
  note   = {Lecture notes, Universitat de Barcelona},
  year   = {2006. Available at https://www.ub.edu/modeltheory/documentos/pregeometries.pdf}
}

@article{baldwin1971strongly,
  author    = {Baldwin, John T. and Lachlan, Alistair H.},
  title     = {On strongly minimal sets},
  journal   = {The Journal of Symbolic Logic},
  volume    = {36},
  number    = {1},
  pages     = {79--96},
  year      = {1971},
  publisher = {Cambridge University Press}
}

@misc{HansonWiki,
  author = {Hanson (ed.), James},
  title = {Lascar rank},
  year = {mirror of Model Theory Fandom, online encyclopedia page. Available at https://james-hanson.github.io/wiki/Lascar{\textunderscore}rank.html.},
}

@article{Palacin2013Ample,
  title = {Ample thoughts},
  author = {Palacín, D. and Wagner, F. O.},
  journal = {Journal of Symbolic Logic},
  volume = {78},
  number = {2},
  pages = {489--510},
  year = {2013},
  doi = {10.2178/jsl.7802080},
  url = {https://doi.org}
}

@misc{Bos25,
  author = {Bossut, Yvon},
  title = {A Note on Stable {K}im-forking},
  year = {Preprint. Available at https://arxiv.org. 2025},
}

@misc{LongKoponenConjecturePreprint,
author = {John Baldwin and James Freitag  and Scott Mutchnik},
title = {Simple Homogeneous Structures and Indiscernible Sequence Invariants},
year = {preprint. Available at https://arxiv.org/abs/2405.08211. 2024}
}

@article{HHL00,
 ISSN = {0003486X, 19398980},
 URL = {http://www.jstor.org/stable/2661382},
 author = {Bradd Hart and Ehud Hrushovski and Michael C. Laskowski},
 journal = {Annals of Mathematics},
 number = {1},
 pages = {207--257},
 publisher = {[Annals of Mathematics, Trustees of Princeton University on Behalf of the Annals of Mathematics, Mathematics Department, Princeton University]},
 title = {The Uncountable Spectra of Countable Theories},
 urldate = {2025-08-09},
 volume = {152},
 year = {2000}
}

@phdthesis{brower2012,
  title     = {Aspects of stability in simple theories},
  author    = {Brower, Donald A.},
  school    = {University of Notre Dame},
  year      = {2012},
  address   = {Notre Dame, IN},
  type      = {Doctoral dissertation},
  url       = {https://curate.nd.edu/articles/thesis/Aspects_of_stability_in_simple_theories/24737640}
}

@article{Per06,
	author = {Assaf Peretz},
	doi = {10.2178/jsl/1140641179},
	journal = {Journal of Symbolic Logic},
	number = {1},
	pages = {347--359},
	publisher = {Association for Symbolic Logic},
	title = {Geometry of Forking in Simple Theories},
	volume = {71},
	year = {2006}
}

@article{Kim01,
	author = {Byunghan Kim},
	doi = {10.2307/2695047},
	journal = {Journal of Symbolic Logic},
	number = {2},
	pages = {822--836},
	publisher = {Association for Symbolic Logic},
	title = {Simplicity, and Stability in There},
	volume = {66},
	year = {2001}
}

@article{palacin2012omega,
  title={On $\omega$-categorical simple theories},
  author={Palac{\'\i}n, Daniel},
  journal={Archive for Mathematical Logic},
  volume={51},
  pages={709--717},
  year={2012},
  publisher={Springer}
}

@inproceedings{Kim10,
  title={THE GROUP CONFIGURATION THEOREM AND ITS APPLICATIONS},
  author={Byunghan Kim},
  year={2010},
  url={https://api.semanticscholar.org/CorpusID:213176310}
}

@article{BTW02,
author = {Itay Ben-Yaacov and Ivan Tomasic and Frank O. Wagner},
title = {{The group configuration in simple theories and its applications}},
volume = {8},
journal = {Bulletin of Symbolic Logic},
number = {2},
publisher = {Association for Symbolic Logic},
pages = {283 -- 298},
year = {2002},
doi = {10.2178/bsl/1182353874},
URL = {https://doi.org/10.2178/bsl/1182353874}
}

@article{TW03,
  title={Applications of the Group Configuration Theorem in Simple Theories},
  author={Ivan Tomašić and Frank O. Wagner},
  journal={J. Math. Log.},
  year={2003},
  volume={3},
  url={https://api.semanticscholar.org/CorpusID:59528831}
}

@article{Hrustrongmin, AUTHOR = " Hrushovski,E.", TITLE = " A new strongly
minimal set", YEAR = "1993", VOLUME = "62", JOURNAL = " Annals of Pure and
Applied Logic", pages = "147-166" }

@article{PW13,
	doi = {10.1215/00294527-2143970},
  
	url = {https://doi.org/10.1215%2F00294527-2143970},
  
	year = 2013,
  
	publisher = {Duke University Press},
  
	volume = {54},
  
	number = {3-4},
  
	author = {D. Palac\'in and F. O. Wagner},
  
	title = {Elimination of Hyperimaginaries and Stable Independence in Simple {CM}-Trivial Theories},
  
	journal = {Notre Dame Journal of Formal Logic}
}

@article{K21, title={Weak canonical bases in $\mathrm{NSOP}_1$ theories}, volume={86}, DOI={10.1017/jsl.2021.45}, number={3}, journal={The Journal of Symbolic Logic}, publisher={Cambridge University Press}, author={Byunghan Kim}, year={2021}, pages={1259–1281}}

@book{Wag00,
  title={Simple Theories},
  author={Wagner, Frank},
  year={2000},
  publisher={Springer Dordrecht}
}

@misc{Sh99,
  doi = {10.48550/ARXIV.MATH/9910158},
  
  url = {https://arxiv.org/abs/math/9910158},
  
  author = {Shelah, Saharon},
  
  title = "{On what I do not understand (and have something to say), model theory}",
  
  publisher = {arXiv},
  
  year = {Preprint. Available at https://arxiv.org/abs/math/9910158. 1999},
  
  copyright = {Assumed arXiv.org perpetual, non-exclusive license to distribute this article for submissions made before January 2004}
}

@article{HKP00,
 ISSN = {00224812},
 URL = {http://www.jstor.org/stable/2586538},
 author = {Bradd Hart and Byunghan Kim and Anand Pillay},
 journal = {The Journal of Symbolic Logic},
 number = {1},
 pages = {293--309},
 publisher = {Association for Symbolic Logic},
 title = {Coordinatisation and Canonical Bases in Simple Theories},
 urldate = {2023-03-12},
 volume = {65},
 year = {2000}
}

@article{Che14,
  title={Theories without the tree property of the second kind},
  author={Artem Chernikov},
  journal={Ann. Pure Appl. Log.},
  year={2014},
  volume={165},
  pages={695-723}
}

@book{Kim14,
  title={Simplicity theory},
  author={Kim, Byunghan},
  year={2014},
  publisher={Oxford University Press}
}

@article{Bu91,
 ISSN = {00224812},
 URL = {http://www.jstor.org/stable/2275467},
 author = {Steven Buechler},
 journal = {The Journal of Symbolic Logic},
 number = {4},
 pages = {1184--1194},
 publisher = {[Association for Symbolic Logic, Cambridge University Press]},
 title = {Pseudoprojective Strongly Minimal Sets are Locally Projective},
 urldate = {2022-09-07},
 volume = {56},
 year = {1991}
}

@article{KP99,
title = {Simple theories},
journal = {Annals of Pure and Applied Logic},
volume = {88},
number = {2},
pages = {149-164},
year = {1997},
note = {Joint AILA-KGS Model Theory Meeting},
issn = {0168-0072},
doi = {https://doi.org/10.1016/S0168-0072(97)00019-5},
url = {https://www.sciencedirect.com/science/article/pii/S0168007297000195},
author = {Byunghan Kim and Anand Pillay}
}

@article{Ons06,
 ISSN = {00224812},
 URL = {http://www.jstor.org/stable/27588433},
 author = {Alf Onshuus},
 journal = {The Journal of Symbolic Logic},
 number = {1},
 pages = {1--21},
 publisher = {[Association for Symbolic Logic, Cambridge University Press]},
 title = {Properties and Consequences of Thorn-Independence},
 urldate = {2022-08-18},
 volume = {71},
 year = {2006}
}

@book{P96,
  title={Geometric Stability Theory},
  author={Pillay, A.},
  isbn={9780198534372},
  lccn={lc96017222},
  series={Oxford logic guides},
  url={https://books.google.com/books?id=aRb6wAEACAAJ},
  year={1996},
  publisher={Clarendon Press}
}

@article{KP98,
    author = {Kim, Byunghan},
    title = "{Forking in simple unstable theories}",
    journal = {Journal of the London Mathematical Society},
    volume = {57},
    number = {2},
    pages = {257-267},
    year = {1998},
    month = {04},
    issn = {0024-6107},
    doi = {10.1112/S0024610798005985},
    url = {https://doi.org/10.1112/S0024610798005985},
    eprint = {https://academic.oup.com/jlms/article-pdf/57/2/257/2534631/57-2-257.pdf},
}

@misc{FD, author={Conant, Gabriel}, year={forkinganddividing.com, web media.}}

@book{Sh90,
  title={Classification theory: and the number of non-isomorphic models},
  author={Shelah, Saharon},
  year={1990},
  publisher={Elsevier}
}

@article{M65,
  title={Categoricity in power},
  author={Morley, Michael},
  journal={Transactions of the American Mathematical Society},
  volume={114},
  number={2},
  pages={514--538},
  year={1965},
  publisher={JSTOR}
}

@article{kim2001around,
  title={Around stable forking},
  author={Kim, Byunghan and Pillay, Anand},
  journal={Fundamenta Mathematicae},
  volume={170},
  number={1-2},
  pages={107--118},
  year={2001},
  publisher={Citeseer}
}

@article{BYC14, title={An independence theorem for $\mathrm{NTP}_{2}$ theories}, volume={79}, DOI={10.1017/jsl.2013.22}, number={1}, journal={The Journal of Symbolic Logic}, publisher={Cambridge University Press}, author={Yaacov, Itaï Ben and Chernikov, Artem}, year={2014}, pages={135–153}}

@article{D19, title={Forking, imaginaries and other features of $\text {ACFG}$}, volume={86}, DOI={10.1017/jsl.2021.34}, number={2}, journal={The Journal of Symbolic Logic}, publisher={Cambridge University Press}, author={D'Elbée, Christian}, year={2021}, pages={669–700}}

@article{She95,
title = {Toward classifying unstable theories},
journal = {Annals of Pure and Applied Logic},
volume = {80},
number = {3},
pages = {229-255},
year = {1996},
issn = {0168-0072},
doi = {https://doi.org/10.1016/0168-0072(95)00066-6},
url = {https://www.sciencedirect.com/science/article/pii/0168007295000666},
author = {Saharon Shelah}
}

@article{SU08,
title = {More on $\mathrm{SOP}_{1}$ and $\mathrm{SOP}_{2}$},
journal = {Annals of Pure and Applied Logic},
volume = {155},
number = {1},
pages = {16-31},
year = {2008},
issn = {0168-0072},
doi = {https://doi.org/10.1016/j.apal.2008.02.003},
url = {https://www.sciencedirect.com/science/article/pii/S0168007208000250},
author = {Saharon Shelah and Alexander Usvyatsov}
}

@article{DS04,
	year = {2004},
	journal = {Annals of Pure and Applied Logic},
	title = {On $\lhd^{*}$-Maximality},
	doi = {10.1016/j.apal.2003.11.001},
	number = {1-3},
	volume = {125},
	publisher = {Elsevier},
	author = {Mirna D\v{z}amonja and Saharon Shelah},
	pages = {119--158}
}

\end{document}